\tikzstyle{every node}=[circle, draw, fill=black!50,
\newtheorem{theorem}{Theorem}[section]
\newtheorem{lemma}{Lemma}[section]
\newtheorem{claim}[lemma]{Claim}
\newtheorem{cor}[lemma]{Corollary}
\newtheorem{prop}[lemma]{Proposition}
\renewcommand{\le}{\leqslant}
\renewcommand{\ge}{\geqslant}
\def\qed{\ifvmode\mbox{ }\else\unskip\fi\hskip 1em plus 10fill$\Box$}
\def\Ddots{\mathinner{\mkern1mu\raise\p@
\vbox{\kern7\p@\hbox{.}}\mkern2mu
\raise4\p@\hbox{.}\mkern2mu\raise7\p@\hbox{.}\mkern1mu}}
\def\R{\mathbb R}
\def\T{\mathbb T}
\def\mc{\mathcal }
\def\d{\delta}
\def\M{\operatorname{M}}
\newcommand{\blue}[1]{{\color{blue}#1}}
\newcommand\widecheck[1]{%
\savestack{\tmpbox}{\stretchto{%
  \scaleto{%
    \scalerel*[\widthof{\ensuremath{#1}}]{\kern-.6pt\bigwedge\kern-.6pt}%
    {\rule[-\textheight/2]{1ex}{\textheight}}
  }{\textheight}%
}{0.5ex}}%
\stackon[1pt]{#1}{\scalebox{-1}{\tmpbox}}%
}
\title{Heilbronn's triangle problem in three dimensions}
\author{Dominique Maldague, Hong Wang, Dmitrii Zakharov}
\address{University of Cambridge, UK and University of California,
Los Angeles, USA}
\email{dm672@cam.ac.uk, dmal@math.ucla.edu}
\address{Courant institute of mathematical sciences, New York University}
\email{hw3639@nyu.edu}
\address{Department of Mathematics, Massachusetts Institute of Technology, Cambridge, USA}
\email{zakhdm@mit.edu}
\date{}
\begin{document}

\maketitle 

\begin{abstract}
    We show that among any $n$ points in the unit cube one can find a triangle of area at most $n^{-2/3-c}$ for some absolute constant $c >0$. This gives the first non-trivial upper bound for the three-dimensional version of Heilbronn's triangle problem. This estimate is a consequence of the following result about configurations of point-line pairs in $\R^3$: for $n \ge 2$ let $p_1, \ldots,p_n \in [0,1]^3$ be a collection of points and let $\ell_i$ be a line through $p_i$ for every $i$ such that $d(p_i, \ell_j) \ge \d$ for all $i\neq j$. Then we have $n \lesssim \d^{-3+\gamma}$ for some absolute constant $\gamma>0$.  The analogous result about point-line configurations in the plane was previously established by Cohen, Pohoata and the last author. 
\end{abstract}

\section{Introduction}\label{sec:intro}

In late 1940-s, Heilbronn asked the following question in discrete geometry. For a set of points $P$ let $\Delta=\Delta(P)$ be the minimum area of a triangle spanned by a triple of points in $P$ (collinear triples count as a 0 area triangles). Now define $\Delta(n) = \max_P \Delta(P)$  where the maximum is taken over all subsets $P \subset [0,1]^2$ of size $n$. How does the function $\Delta(n)$ grow with $n$? Two constructions (one algebraic and one probabilistic) due to Erd{\H o}s show that $\Delta(n) \ge c n^{-2}$ for some constant $c>0$ and Heilbronn conjectured that this lower bound is sharp. In 1982, Koml\'os--Pintz--Szemer\'edi \cite{komlos1982lower} disproved his conjecture by showing that $\Delta(n) \gtrsim \frac{\log n}{n^2}$. In the other direction, it is not hard to see that $\Delta(n) \le Cn^{-1}$ holds for some constant $C$. In 1951, Roth \cite{roth1951problem} managed to improve this bound by a small factor tending to zero using a clever density increment argument. Later on, in 1972-73, after a slight improvement by Schmidt \cite{schmidt1972problem}, Roth \cite{roth1972problem2}, \cite{roth1972problem3} developed a new powerful analytic method and used it to get a polynomial saving $\Delta(n) \lesssim n^{-1-c}$ for some (explicit) constant $c>0$. 
In 1981, Koml\'os--Pintz--Szemer\'edi \cite{komlos1981heilbronn} optimized Roth's method and proved that $\Delta(n) \lesssim n^{-8/7+o(1)}$. This was the best known upper bound for about 40 years, until in 2023-24, Cohen, Pohoata and the third author managed to improve the exponent from $8/7$ to $8/7+c$ in \cite{cohen2023new} and then to $7/6$ in \cite{cohen2025lower}. These improvements were made possible by discovering new connections between Roth's approach and recent work in harmonic analysis and fractal geometry. In particular, Roth's analytic method turns out to be closely related to the high-low method in harmonic analysis pioneered by Guth--Solomon--Wang \cite{guth2019incidence} and the key new input in the improved exponent $8/7+c$ in \cite{cohen2023new} comes from the work on radial projections estimates \cite{orponen2024kaufman}.

The following is a natural generalization of Heilbronn's triangle problem to higher dimensions. For integers $d\ge 2$ and $k\in [3, d+1]$ and a set of points $P \subset \R^d$ we can define $\Delta_k(P)$ to be the smallest $k-1$-dimensional volume of a $k$-vertex simplex defined by $P$. We then define a function $\Delta_{k,d}(n) = \max_{P} \Delta_k(P)$ where the maximum is over all subsets $P \subset [0,1]^d$ of size $n$. In particular, we have $\Delta_{3,2}(n)=\Delta(n)$. One can also consider values $k > d+1$ and let $\Delta_k(P)$ to be the  smallest possible $d$-volume of a convex hull of a $k$-tuple of points in $P$. For any given pair of parameters $k, d$ we can investigate the asymptotic behavior of the function $\Delta_{k, d}(n)$. 
Perhaps the two most interesting special cases of this question are the cases of the full-dimensional simplices, $k=d+1$, and triangles, $k=3$ (we refer to \cite{zakharov2024upper} for the treatment of other values of $k$). In the case of simplices, the `easy' bounds are $n^{-d} \lesssim \Delta_{d+1, d}(n) \lesssim n^{-1}$.
The lower bound was improved by Lefmann  \cite{lefmann2008distributions} to $\Delta_{d+1,d}(n) \gtrsim \frac{\log n}{n^d}$ and the upper bound was improved by the third author \cite{zakharov2024upper} to $\Delta_{d+1, d}(n) \lesssim n^{-\log d +C}$ for some constant $C$ (along with a number of bounds in low dimensions, e.g. that $\Delta_{4,3}(n) \le C n^{-4/3}$). 
In the case of triangles in $\R^d$, the `easy' bounds are $n^{- \frac{2}{d-1}} \lesssim \Delta_{3,d}(n) \lesssim n^{-\frac{2}{d}}$. The lower bound follows by taking a uniformly random set of $2n$ points and removing from it all triangles with area less than $c n^{-\frac{2}{d-1}}$ for a small constant $c>0$. This lower bound was improved to $\Delta_{3, d}(n)\gtrsim (\log n)^{\frac{1}{d-1}}n^{-\frac{2}{d-1}}$ by Lefmann \cite{lefmann2008distributions} following the semi-random approach of Koml\'os--Pintz--Szemer\'edi \cite{komlos1982lower}.
For the upper bound one can use the following packing argument. Since $[0,1]^d$ can be covered by at most $n/3$ cubes with sides of length at most $10n^{-1/d}$, by pigeonhole principle, there is a cube containing at least 3 points of $P$. These points then form a tringle of area less than $C n^{-2/d}$.
To the best of our knowledge, this is still asymptotically the best known upper bound on the function $\Delta_{3, d}(n)$ for any $d\ge 3$. 

We make the first progress in this direction and prove a new estimate for Heilbronn's triangle problem in $\R^3$:

\begin{theorem}\label{thm:triangles-in-R3}
    Let $P \subset [0,1]^3$ be a set of $n$ points. Then $P$ contains three points forming a triangle of area at most $C n^{-2/3 - c}$ for some absolute constants $c>0, C\ge 1$. 
\end{theorem}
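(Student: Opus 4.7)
Let $P \subset [0,1]^3$ be a set of $n$ points and write $A = \Delta(P)$ for the minimum triangle area in $P$; the goal is to prove $A \lesssim n^{-2/3-c}$ for some absolute $c > 0$. The plan is to extract from $P$ a point-line configuration meeting the hypotheses of the three-dimensional point-line estimate $n' \lesssim \delta^{-3+\gamma}$ quoted in the abstract, and then combine it with a simple $r$-separation bound on the nearest-neighbor scale to convert the $\gamma$-improvement of the point-line theorem into a $c$-improvement over the trivial Heilbronn exponent $2/3$.

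For each $p_i \in P$ let $p_{\nu(i)}$ be a nearest neighbor of $p_i$ in $P$, put $r_i := |p_i - p_{\nu(i)}|$, and let $\ell_i$ be the line through $p_i$ and $p_{\nu(i)}$. Since the triangle $p_i p_{\nu(i)} p_k$ has area $\ge A$ for every $p_k \in P \setminus \{p_i, p_{\nu(i)}\}$, the distance from $p_k$ to $\ell_i$ is $\ge 2A/r_i$; as this distance is at most $\mathrm{diam}([0,1]^3)$, we obtain the crude bound $r_i \gtrsim A$, so the values $r_i$ occupy $O(\log n)$ dyadic scales in $[A, \sqrt{3}]$. By dyadic pigeonhole on $r_i$ there exist $r > 0$ and a subset $P' \subset P$ with $|P'| \gtrsim n/\log n$ and $r_i \in [r, 2r]$ for every $i \in P'$. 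Because $r_i$ is the nearest-neighbor distance of $p_i$, every other point of $P$ is at distance $\ge r$ from $p_i$, so $P'$ is $r$-separated inside $[0,1]^3$; volume packing gives $|P'| \lesssim r^{-3}$ and, combined with $|P'| \gtrsim n/\log n$, this produces the key bound
\[
r \lesssim \left( \frac{\log n}{n}\right)^{1/3}.
\]

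For the point-line hypothesis, note that for $i \in P'$ and $p_j \in P \setminus \{p_i, p_{\nu(i)}\}$ one has $d(p_j, \ell_i) \ge 2A/r_i \ge A/r$. The only potential failures come from indices $i$ for which $p_{\nu(i)} \in P'$. A standard random-thinning argument removes them: taking $P'' \subset P'$ uniformly at random of size $|P'|/2$, the conditional probability that $p_{\nu(i)} \in P''$ given $i \in P''$ is at most $1/2$, so fixing a favorable outcome and restricting to those $i \in P''$ with $p_{\nu(i)} \notin P''$ yields a set $P''' \subset P'$ of size $\gtrsim n/\log n$ satisfying the genuine point-line separation
\[
d(p_j, \ell_i) \ge \delta := A/r \qquad \text{for all distinct } i, j \in P'''.
\]

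Applying the three-dimensional point-line estimate gives $|P'''| \lesssim \delta^{-3+\gamma} = r^{3-\gamma} A^{-3+\gamma}$; substituting both $|P'''| \gtrsim n/\log n$ and the upper bound on $r$ and rearranging, one arrives at
\[
A \lesssim (\log n)^{O(1)} \, n^{-(6-\gamma)/(9-3\gamma)}.
\]
Since $(6-\gamma)/(9-3\gamma) = 2/3 + \gamma/9 + O(\gamma^2)$ strictly exceeds $2/3$, absorbing the logarithmic factors into a slightly smaller exponent establishes $A \lesssim n^{-2/3-c}$ with some $c = c(\gamma) > 0$. The real obstacle lies not in this reduction -- which is a routine combination of pigeonhole, nearest-neighbor separation, and random thinning, whose only genuinely new ingredient is the observation that the nearest-neighbor scale is automatically at most $\sim n^{-1/3}$ (up to logs) -- but in the three-dimensional point-line estimate itself, which is the paper's principal technical contribution and should require a three-dimensional adaptation of the high--low and radial-projection framework developed by Cohen, Pohoata, and Zakharov in the planar case.
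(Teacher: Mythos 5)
Your proposal is correct and follows essentially the same route as the paper: both reduce Theorem \ref{thm:triangles-in-R3} to Theorem \ref{thm:main} by attaching to each of $\gtrsim n$ points a line through a nearby point of $P$, using the minimum-area hypothesis to convert this into the $\delta$-separation $d(p_i,\ell_j)\ge\delta$, and then turning the resulting bound $\delta \lesssim n^{-1/(3-\gamma)+o(1)}$ into a triangle of area $\lesssim n^{-1/3-1/(3-\gamma)+o(1)}$. The paper's version just greedily extracts $[n/4]$ pairwise disjoint pairs at distance $\lesssim n^{-1/3}$ (so disjointness replaces your random thinning, and no dyadic pigeonhole or $\log n$ losses are needed), but your nearest-neighbor variant is a correct implementation of the same idea, modulo the trivial remark that one may assume $A \ge n^{-1}$ (else done) so that only $O(\log n)$ dyadic scales occur.
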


In other words, we have the new upper bound $\Delta_{3,3}(n) \le C n^{-2/3-c}$.
Our proof produces an explicit, albeit very tiny, value of $c$. By tracking the estimates, one should be able to take $c= 10^{-100}$ or so in Theorem \ref{thm:triangles-in-R3}, however we did not make any attempt to optimize this value. 

In \cite{cohen2025lower}, the estimate $\Delta(n) \le n^{-7/6+o(1)}$ was proven using the following result about configurations of incident point-line pairs.
\begin{theorem}[\cite{cohen2025lower}]\label{thm:point-line-R2}
    For all $\varepsilon >0$ there is $C_\varepsilon$ such that the following holds for all $n\ge 2$. 
    Let $p_1, \ldots, p_n \in [0,1]^2$ be a collection of points and let $\ell_i$ be a line through $p_i$. Suppose that $d(p_i, \ell_j) \ge \d$ for all $i\neq j$ and some $\d>0$. Then we have $n \le C_\varepsilon \d^{-\varepsilon}\d^{-3/2}$. 
\end{theorem}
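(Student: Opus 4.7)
The plan is to reduce the statement to a $\delta$-discretized incidence problem and then combine a Szemer\'edi--Trotter style bound for $\delta$-tubes with a radial projection estimate, iterated over scales. As a first step, pigeonhole lets us assume the points $p_i$ are pairwise $\delta$-separated. Let $T_i$ be the $\delta$-tube (neighborhood) around $\ell_i$; the hypothesis then says $p_j \notin T_i$ for all $j \ne i$, so each tube contains only its designated point. The target exponent $3/2$ matches the Szemer\'edi--Trotter bound for tube-point incidences, so the main task is to extract enough structure from the one-sided separation condition to force such a bound.

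The main combinatorial input is the observation that the condition $d(p_i, \ell_j) \ge \delta$ is equivalent to an angular gap: the chord from $p_j$ to $p_i$ makes an angle at least $\delta / |p_i - p_j|$ with $\ell_j$. Dyadically decomposing pairs $(i,j)$ by $|p_i - p_j|$, this transforms into a statement about radial projections of $P \setminus \{p_j\}$ from $p_j$, namely that the projection avoids a small arc around the direction of $\ell_j$ at each dyadic scale. I would then count incident tube-point pairs at each scale using Szemer\'edi--Trotter for $\delta$-tubes applied to the lines $\ell_i$ and the points $p_j$: the bound $(nm)^{2/3} + n + m$ combined with the angular gap information yields, on a single scale and under mild structural assumptions, the sought inequality $n \lesssim \delta^{-3/2}$ up to logarithmic factors.

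To remove structural assumptions and to absorb the $\varepsilon$-loss, I would run a high-low induction on $\delta$: assume by induction that the bound holds at every scale larger than $\delta$, pick an intermediate scale $r = \delta^{\alpha}$, and decompose the configuration into rescaled pieces inside $r$-balls. Each $r$-ball of $P$ satisfies a rescaled version of the hypothesis with parameter $\delta/r$, and the inductive bound at that scale combines with a Szemer\'edi--Trotter count at scale $r$ to yield the bound at scale $\delta$ after optimizing $\alpha$. The crucial auxiliary input, analogous to the Orponen--Shmerkin radial projection estimates of \cite{orponen2024kaufman}, should provide a Furstenberg-type bound with only a subpolynomial loss in the exponent.

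The main obstacle is exactly this single-$\varepsilon$-loss guarantee. A naive multi-scale iteration pays an extra $\delta^{-\varepsilon}$ per stage, which accumulates to a wasteful $\delta^{-3/2 - \varepsilon \log(1/\delta)}$ and destroys the polynomial improvement. Closing the argument therefore requires a radial projection / Furstenberg set estimate that is already sharp in the exponent up to subpolynomial factors, so that when the induction is unfolded only a single $\delta^{-\varepsilon}$ factor appears; this is where the harmonic-analytic machinery of \cite{orponen2024kaufman} and the framework developed in \cite{cohen2023new} enter in an essential way.
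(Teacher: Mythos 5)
Your proposal does not close the statement, and the gaps are at its core rather than at the edges. Note first that Theorem \ref{thm:point-line-R2} is quoted from \cite{cohen2025lower}; the present paper does not reprove it, and the closest in-paper argument (Proposition \ref{prop:main-in-R2}) only yields $\operatorname{PL}_2(\gamma)$ for a small $\gamma$. The first missing idea is the actual mechanism that produces an upper bound on $n$. A Szemer\'edi--Trotter count for points and $\delta$-tubes is an \emph{upper} bound on incidences, but in this problem the incidence count at scale $\delta$ is already trivially small: the hypothesis $d(p_i,\ell_j)\ge\delta$ for $i\ne j$ leaves only the $n$ diagonal pairs, so no contradiction can be extracted from an incidence upper bound alone. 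What is needed is a \emph{lower} bound on incidences at a coarser scale together with a device linking scales; in the paper's treatment of the weaker statement these are, respectively, the initial estimate $B(C\delta^{1/2})\gtrsim 1$ (proved by observing that points of a $\delta^{1/2}$-cell assigned to the same $\delta^{1/2}$-tube have $\delta$-separated projections along the tube direction, so each cell meets $\gtrsim\delta^{-1/2}$ tubes) and the high-low inequality of Proposition \ref{prop:high-low-R2}, summed over dyadic scales against $B(c\delta)\lesssim\delta$. Your angular-gap reformulation of the hypothesis is correct but is only a restatement; you never convert it into a quantitative coarse-scale incidence lower bound or a non-concentration input, so the claimed derivation of $n\lesssim\delta^{-3/2}$ ``on a single scale'' is not substantiated.

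Second, the multi-scale induction as described does not close even granting ideal inputs: covering by $r$-balls and applying the rescaled bound inside each gives $n\lesssim r^{-2}(r/\delta)^{3/2}=r^{-1/2}\delta^{-3/2}$, which is \emph{weaker} than the target for every $r<1$, because the cell decomposition discards the global structure of the lines; some cross-cell mechanism is indispensable and is not specified. Finally, the ``crucial auxiliary input'' -- a radial-projection/Furstenberg-type estimate sharp up to a single $\delta^{-\varepsilon}$ loss that would make the unfolded induction lose only one subpolynomial factor -- is precisely the heart of the theorem and is asserted rather than proved; the radial projection estimates of \cite{orponen2024kaufman} were what gave the exponent $8/7+c$ in \cite{cohen2023new}, whereas the exponent $3/2$ of the present statement required the different and substantially more involved argument of \cite{cohen2025lower}. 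As written, the proposal is an outline whose two load-bearing steps are missing.
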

In the same spirit, we reduce Theorem \ref{thm:triangles-in-R3} to the following three dimensional version of Theorem \ref{thm:point-line-R2}:

\begin{theorem}\label{thm:main}
    There exists an absolute constant $\gamma > 0$ such that for all $\varepsilon >0$ there is $C_\varepsilon$ such that the following holds. 
    Let $p_1, \ldots, p_n \in [0,1]^3$ be a collection of points and let $\ell_i$ be a line through $p_i$. Suppose that $d(p_i, \ell_j) \ge \d$ for all $i\neq j$ and some $\d>0$. Then we have $n \le C_\varepsilon \d^{-\varepsilon} \d^{-3+\gamma}$.
\end{theorem}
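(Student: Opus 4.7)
The plan is to reduce the three-dimensional estimate to the two-dimensional one (Theorem \ref{thm:point-line-R2}) by combining a direction-cap decomposition with a projection/slicing argument. After standard dyadic pigeonholing we may regularize the configuration so that the points $p_i$ are $\rho$-separated at some intermediate scale $\rho \in [\delta, 1]$, and so that the directions $v_i$ of the lines $\ell_i$ on $S^2$ all lie in spherical caps of a common angular radius $\theta \in [\delta, 1]$. It is natural to split into a \emph{broad} regime, in which many caps of scale $\theta$ are populated, and a \emph{narrow} regime, in which essentially all directions $v_i$ fall inside a single cap $\kappa$ of radius $\theta$.

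In the broad regime, the lines are $\theta$-separated in direction, and the tubes $N_\delta(\ell_i)$ each contain exactly one distinguished point $p_i$. One then expects to extract a saving from a Kakeya/Wolff-type direction-separated incidence estimate for tubes in $\R^3$, which beats the trivial $\delta^{-3}$ bound when $\theta$ is not too small.

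In the narrow regime we rotate so that $\kappa$ is centered at $e_3$ and parametrize $\ell_i = \{(x_i + a_i t, y_i + b_i t, t)\}$ with $|a_i|, |b_i| \lesssim \theta$. Slicing $[0,1]^3$ into thin horizontal slabs $\{|z - z_0| \leq \eta\}$ and pigeonholing on $z_0 \approx z_i^\ast$, inside a single slab each line is well-approximated by a straight segment, so projection along $e_3$ to the $xy$-plane sends points to points and lines to short segments. Rescaling the direction cap to a unit disk turns these segments into genuine lines in $\R^2$, and (for generic position) the separation condition $d(p_i, \ell_j) \geq \delta$ descends to a planar separation of size $\delta' \sim \delta/\theta$. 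Theorem \ref{thm:point-line-R2} then bounds the number of points in the slab by $(\delta/\theta)^{-3/2-\varepsilon}$, and summing over $\eta^{-1}$ slabs and $\theta^{-2}$ caps (and optimizing against the broad bound) is designed to produce the target exponent $\delta^{-3+\gamma}$.

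The main obstacle is that the naive parameter count in the narrow regime returns exactly $\delta^{-3}$, so extracting any genuine polynomial improvement requires actually harnessing the $\varepsilon$-power saving of Theorem \ref{thm:point-line-R2} and propagating it carefully through the rescalings. Two subtleties appear especially delicate: first, the degenerate case where $\ell_j$ is nearly parallel to the slab (angle $\alpha$ between $\ell_j$ and the slab plane as small as $\theta$), in which projection distorts the planar separation by a factor $\sin \alpha$ and must be isolated by an additional dyadic pigeonhole; and second, the fact that a single pass of the 2D theorem may be insufficient, so that an induction on scales or a bootstrapping loop must be set up, where the output bound at scale $\delta$ feeds back into the narrow-regime argument at a coarser scale in order to close a quantitative recursion with positive gain $\gamma$. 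This bootstrapping, together with matching the broad and narrow bounds at the optimal $\theta$, is where I expect the real work to lie.
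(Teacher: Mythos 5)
Your reduction does not close, and the gap is quantitative, not just technical. In the narrow regime the projection along the cap direction sends each line to a segment of length $\lesssim \theta\eta$ inside a slab of thickness $\eta$, while the separation you can inherit in the plane is only $\sim \delta/\theta$ \emph{minus} the horizontal drift $O(\theta\eta)$ of the line across the slab; so to keep any separation at all you are forced to take $\eta \ll \delta/\theta$, at which point the projected ``lines'' are shorter than the separation and Theorem \ref{thm:point-line-R2} degenerates to the trivial statement that a $\delta$-separated planar point set has $\lesssim \delta^{-2}$ points. (Extending the short segments to genuine planar lines is not allowed: the extension can pass arbitrarily close to projected points without violating the three-dimensional hypothesis.) Even if one grants the per-slab bound, the bookkeeping cannot beat $\delta^{-3}$: per cap of radius $\theta$ you get at best $\lesssim \theta\delta^{-3}$ pairs, but there are $\theta^{-2}$ caps, and the separation hypothesis used inside a cap only involves pairs whose directions lie in that same cap, so all cross-cap interactions — which is where the actual content of the theorem lies — are discarded. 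You acknowledge that the naive count returns $\delta^{-3}$, but no mechanism is proposed that actually produces the missing gain; ``bootstrapping the output at scale $\delta$ into a coarser scale'' is not a gain-producing step unless some inequality strictly improves at each pass, and nothing in the proposal does. The broad regime has the same problem: there is no off-the-shelf ``Kakeya/Wolff-type direction-separated incidence estimate'' that beats $\delta^{-3}$ here — the bush configuration ($\sim\delta^{-2}$ direction-separated lines through one point) shows direction separation alone gives nothing, and excluding bush-like behaviour is a substantial part of the real proof.

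For comparison, the paper uses slicing/rescaling arguments of exactly your flavour, but only to extract a \emph{structural} hypothesis, not a count: Proposition \ref{prop:plane-reduction} applies $\operatorname{PL}_2(\gamma)$ inside $u\times w\times 1$ prisms whose long axis is along the lines (so the lines stay long after collapsing the thin direction), yielding the Katz--Tao non-concentration axiom for the tube family. The counting gain then comes from Fourier-analytic high-low inequalities: the basic one (Lemma \ref{lem:easy-high-low}) is sharp exactly for the bush and plane examples, and the two refinements — Theorem \ref{thm:refined_high-low}, via a two-ends decomposition of the relevant spherical rectangles, which exploits a restricted direction set, and Theorem \ref{thm:small-box-high-low}, via Wolff's hairbrush estimate, which exploits well-spacedness at scale $\delta^{1/2}$ — are then combined with initial estimates and a multi-scale rescaling iteration in Section \ref{sec:main-proof}. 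If you want to salvage your approach, the honest conclusion of your narrow/broad analysis is precisely a non-concentration statement like (\ref{eq:spacing-condition}); you would then still need the analytic machinery (or a genuinely new incidence input) to convert that into a bound $n \lesssim \delta^{-3+\gamma}$.
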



\begin{proof}[Proof of Theorem \ref{thm:triangles-in-R3}]
    Let $P \subset [0,1]^3$ be a set of size $n$. By covering the unit cube with $10/n^{1/3}$-size cubes and applying pigeonhole principle, we can find a pair of points $\{p_1, q_1\} \subset P$ such that $d(p_1, q_1) \le Cn^{-1/3}$. Repeating this argument, we can then find a pair $\{p_2, q_2\} \subset P$ disjoint from $\{p_1, q_1\}$ so that $d(p_2, q_2) \le C(n-2)^{-1/3}$. Continuing in this manner,  we can find a collection of $m=[n/4]$ pairwise disjoint pairs of points $\{p_i, q_i\} \subset P$ such that $d(p_i, q_i) \le 2Cn^{-1/3}$ for all $i$. Let $\ell_i = \overline{p_iq_i}$ and consider the collection of point-line pairs $X = \{(p_i, \ell_i), ~i=1,\ldots, m\}$. Let $\d = \min_{i\neq j} d(p_i, \ell_j)$. 
    By Theorem \ref{thm:main}, for all $\varepsilon>0$ we have $m \lesssim_\varepsilon \d^{-3+\gamma-\varepsilon} $. Thus, there exists $i\neq j$ such that $d(p_i, \ell_j) \le \d\lesssim n^{ -\frac{1}{3-\gamma}+o(1) }$. The triangle $p_i p_jq_j$ has area at most $n^{-\frac{1}{3} -\frac{1}{3-\gamma}+o(1)} \le Cn^{-2/3-c}$, finishing the proof.
\end{proof}

To put results stated in Theorem \ref{thm:main} and Theorem \ref{thm:point-line-R2} into perspective, it is helpful to introduce some notation. A {\em point-line pair} in $\R^d$ is a pair $(p, \ell)$ where $p \in [0,1]^d$ and $\ell$ is a line passing through $p$. We denote by $\Omega_d$ the set of all point-line pairs in $\R^d$ (note that $\dim\Omega_d=2d-1$). We are interested in properties of finite collections of point-line pairs, i.e. subsets $X \subset \Omega_d$. For any such $X$, we define the {\em minimal distance} of $X$ as 
\[
d(X) = \min_{(p, \ell)\neq (p', \ell')\in X} d(p, \ell'),
\]
i.e. it is the minimal distance between a point in the configuration $X$ and a line passing through a different point in the configuration.

Let $\operatorname{PL}_d(\gamma)$ denote the following statement: for any $\varepsilon>0$ there is $C_\varepsilon$ such that for any point-line configuration $X \subset \Omega_d$ with $d(X) \ge \delta$, it follows that $|X| \le C_\varepsilon \d^{-\varepsilon}  \delta^{-d+\gamma}$ for every $\varepsilon>0$. 
With this definition, Theorem \ref{thm:point-line-R2} is the statement $\operatorname{PL}_2(1/2)$ and 
Theorem \ref{thm:main} is equivalent to the statement that $\operatorname{PL}_3(\gamma)$ holds for some $\gamma>0$.

Note that for any $d\ge 2$, the statement $\operatorname{PL}_d(0)$ is trivially true. Indeed, for a configuration $X \subset \Omega_d$, consider the set of points $P[X] = \{ p:~(p,\ell)\in X\} \subset [0,1]^d$ (taking points with multiplicity if there are overlaps). By covering the unit cube with subcubes of size $C_d|X|^{-1/d}$ and applying the pigeonhole principle, we can find some $p \neq p' \in P[X]$ so that $d(p, p') \le C_d |X|^{-1/d}$. Now if $(p', \ell') \in X$ is the corresponding point-line pair, then we get $d(X) \le d(p, \ell') \le d(p, p')\le C_d |X|^{-1/d}$. So if $d(X) \ge \d$ then it follows that $|X| \lesssim \d^{-d}$, showing $\operatorname{PL}_d(0)$. 

On the other hand, we can consider a collection of $\sim\d^{1-d}$ points in $[0,1]^{d-1}\times \{0\}$ which are pairwise $\d$-separated and for each point draw the line in the vertical direction through it. The resulting point-line configuration $X$ satisfies $|X| \sim \d^{1-d}$ and $d(X) \ge \d$. This implies that $\operatorname{PL}_d(1+\varepsilon)$ is false for every $\varepsilon>0$. In an upcoming note \cite{logunov2025configuration}, it will be shown that exists some constant $c>0$ such that for all $d\ge 2$ the statement $\operatorname{PL}_d(1-c)$ does not hold. 

In the plane, the statement $\operatorname{PL}_2(1/2)$ appears to be the limit of the high-low method as demonstrated by the Szemer\'edi--Trotter example and the hermitian unital example in the finite field model, we refer to \cite{cohen2025lower} for more discussion on this.
There are some tentative reasons to think that $\operatorname{PL}_d(1/d)$ is going to be at the limit for analytic approaches and reaching this limit would be very interesting. 
However the proof presented in this paper is quite far from this limit as the explicit value of $\gamma$ in Theorem \ref{thm:main} is extremely poor. The proof presented in this paper relies on a certain numerical coincidence specific to $\R^3$ (see the next section for more details) and it appears that the problem becomes significantly harder in dimension four. Thus, we leave proving that $\operatorname{PL}_4(\gamma)$ holds for some $\gamma>0$ as a direction for future research.

The rest of the paper is organized as follows. In Section \ref{sec:proof-outline} we provide an exposition of the proof of Theorem \ref{thm:main}. We start by giving a proof of $\operatorname{PL}_2(\gamma)$ for some $\gamma>0$ using the two-dimensional high-low inequality. We then discuss the adjustments to the strategy in three dimensions. In Section \ref{sec:incidence} we prove two results about incidences between points and tubes. These results are based on Wolff's two-ends and hairbrush arguments and do not require any `advanced' geometric measure theory (though those might become handy when trying to optimize the value of $\gamma$). 
In Section \ref{sec:high-low}, we prove several high-low inequalities for point-tube incidences in $\R^3$, starting from the basic estimate, followed by two refined versions. The incidence results from the previous section are a key ingredient here. In Section \ref{sec:preliminaries} we give some preliminary results needed in Section \ref{sec:main-proof}, in which we employ the high-low inequalities from Section \ref{sec:high-low} in an iterative scheme and prove Theorem \ref{thm:main}.

\subsection{Notation.}

We use the following asymptotic notation. We write $A\lesssim B$ if $|A|\le CB$ for some constant  $C$, we also write $A\sim B$ if $A\lesssim B$ and $B\lesssim A$ (and similarly for the following notation). 
We write $A\lesssim_K B$ if $A \le CK^CB$ for some constant $C$ and similarly if there more parameters in the subscript. 
We sometimes write $A \lessapprox_\varepsilon B$ to denote the fact that $A \lesssim_{\d^{-\varepsilon}} B$ where $\d>0$ is a parameter which will be clear from context. Sometimes we write $O(1)$ for a quantity bounded in absolute value by a constant and $o(1)$ for a quantity tending to $0$ with $\d$.
Finally, we somewhat informally use the notation $A\ll B$ to say that `$A$ is much smaller than $B$', e.g. it will typically mean that $A\lesssim \d^c B$ for some $c>0$ unless specified otherwise. 

\subsection{Acknowledgements.} The third author thanks Alex Cohen and Larry Guth for helpful conversations about the problem. 

\section{Proof overview.}\label{sec:proof-outline}

In this section we outline the main ideas in the proof of Theorem \ref{thm:main}. We begin by recalling the two-dimensional high-low inequality and use it to give a short proof of the two-dimensional version of Theorem \ref{thm:main}. Then we discuss the new difficulties that arise when we go to three dimensions and how to overcome them. 

\subsection{High-low inequality in two dimensions.}\label{subsec:high-low-R2} The key tool in polynomial upper bounds for Heilbronn's triangle problem is an analytic inequality relating the incidences between points and lines on different scales. We follow the setup used in \cite{cohen2023new} and \cite{cohen2025lower}. 

Let $P$ and $L$ be finite collections of points and lines in $[0,1]^2$. For a certain symmetric bump function $\varphi: \R\to [0,1]$ supported on $[-2,2]$ we define
\[
I(\d; P, L) = \sum_{p \in P, ~\ell \in L} \varphi( \d^{-1}d(p, \ell) ).
\]
This is the number of approximate incidences between $P$ and $L$ on scale $\d$, smoothed out by a bump function $\varphi$. It is essentially counting the number of pairs $(p, \ell) \in P\times L$ such that $d(p, \ell) \le \d$.

Note that if $P$ and $L$ are taken randomly inside $[0,1]^2$, then we expect $I(\d; P, L) \approx \d|P| |L|$. 
So it is natural to introduce the normalization 
\[
B(\d) = B(\d;P, L) = \frac{I(\d;P,L)}{\d|P| |L|}.
\]
Roth's key idea was to relate the incidence counts $I(\d;P, L)$ between many different scales. Roughly speaking, he observed that the normalized incidence function $B(\d)$ is essentially constant on scales where points and lines are sufficiently well distributed. Namely, let us define concentration numbers
\begin{align*}
&\M_P(\d) = \max_{Q:~\d\times \d \text{ square}} |P\cap Q|,\\
&\M_L(\d\times 1) = \max_{T:~\d\times 1\text{ tube}} |L\cap T|
\end{align*}
where $L\cap T$ is the set of lines $\ell \in L$ so that $|T\cap \ell| \ge 1/2$. With this notation, we have the following estimate.

\begin{prop}\label{prop:high-low-R2}
    There is a bump function $\varphi$ such that any $P$ and $L$ and $\d>0$ we have:
    \begin{equation}\label{eq:high-low-R2}
    |B(\d) - B(\d/2)|^2 \lesssim \d^{-3} \frac{\M_P(\d)}{|P|} \frac{\M_L(\d\times 1)}{|L|}
\end{equation}
\end{prop}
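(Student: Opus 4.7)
The plan is to choose $\varphi$ so that $\psi(t) := \varphi(t) - 2\varphi(2t)$ is a Littlewood--Paley wavelet. Taking $\hat\varphi$ to be a smooth bump equal to $1$ on $[-1/2,1/2]$ and supported in $[-2,2]$ gives $\hat\psi(\xi) = \hat\varphi(\xi) - \hat\varphi(\xi/2)$ supported in the annulus $|\xi|\in [1/2, 4]$ with $\hat\psi(0) = 0$; in particular $\int\psi = 0$. By definition,
\[
\d|P||L|\bigl(B(\d) - B(\d/2)\bigr) = I(\d) - 2I(\d/2) = \sum_{p\in P,\,\ell\in L}\psi(d(p,\ell)/\d).
\]
For each line $\ell$ with unit normal $n_\ell$ and base point $a_\ell\in\ell$, the kernel $K_{\ell,\d}(x) := \psi((x-a_\ell)\cdot n_\ell/\d)$ on $\R^2$ depends on $x$ only through $x\cdot n_\ell$, so its Fourier transform is supported on the set $\{\xi\parallel n_\ell,\ |\xi|\sim\d^{-1}\}$. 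Setting $G := \sum_\ell K_{\ell,\d}$ (suitably localized to a bounded window around $[0,1]^2$ to ensure finite $L^2$ norm), the displayed sum equals $\langle \mu_P, G\rangle$ with $\mu_P = \sum_p \delta_p$, and $\widehat G$ is essentially supported in the annulus $|\xi|\sim \d^{-1}$.

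The next step is to apply Cauchy--Schwarz on the Fourier side. Letting $P_\d$ denote the frequency projection onto this annulus, one has $\langle\mu_P, G\rangle\approx\langle P_\d\mu_P, G\rangle$, hence
\[
|I(\d) - 2I(\d/2)|^2 \lesssim \|P_\d\mu_P\|_2^2\,\|G\|_2^2.
\]
For the point side, $P_\d\mu_P = \mu_P * \rho_\d$ where $\rho_\d$ is a mean-zero bump at scale $\d$ with $\|\rho_\d\|_\infty\lesssim\d^{-2}$; expanding $\|\mu_P * \rho_\d\|_2^2 = \sum_{p,p'}(\rho_\d * \overline{\rho_\d})(p-p')$ and counting $\d$-close point pairs yields $\|P_\d\mu_P\|_2^2 \lesssim \d^{-2}|P|\M_P(\d)$. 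For the tube side, the diagonal of $\|G\|_2^2 = \sum_{\ell,\ell'}\langle K_{\ell,\d}, K_{\ell',\d}\rangle$ contributes $\|K_{\ell,\d}\|_2^2\sim\d$ per line, while for transverse $\ell\ne\ell'$ meeting at angle $\theta$, the change of variables $x \mapsto ((x-a_\ell)\cdot n_\ell,(x-a_{\ell'})\cdot n_{\ell'})$ has Jacobian $\sin\theta$ and the identity $\int\psi = 0$ forces the cross-term to vanish over $\R^2$, leaving only a small boundary piece. The non-negligible off-diagonal contributions come from $\ell'$ lying essentially inside the $\d$-tube around $\ell$; by definition of $\M_L(\d\times 1)$ there are at most $\M_L(\d\times 1)$ such $\ell'$, each contributing $\lesssim\d$, so $\|G\|_2^2 \lesssim \d|L|\M_L(\d\times 1)$. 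Combining these two estimates gives $|I(\d)-2I(\d/2)|^2 \lesssim \d^{-1}|P||L|\M_P(\d)\M_L(\d\times 1)$, and dividing by $(\d|P||L|)^2$ is exactly (\ref{eq:high-low-R2}).

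The main obstacle is making the off-diagonal part of the $\|G\|_2^2$ estimate rigorous: the mean-zero structure of $\psi$ kills transverse cross-terms exactly over $\R^2$, but localizing $G$ to a bounded region (required to place it in $L^2$) produces boundary contributions near the crossing points of pairs of lines, and these must be shown to be absorbable into the $\d|L|\M_L(\d\times 1)$ bound along with the genuinely near-parallel cross-terms. The point-side estimate and the Littlewood--Paley wavelet construction are by now standard in high-low arguments of this kind.
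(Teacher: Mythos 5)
Your proposal is correct in substance and belongs to the same high--low family as the paper's argument (Fourier localization to the annulus $|\xi|\sim\d^{-1}$, Cauchy--Schwarz, a point factor controlled by $\d$-close pairs, a line factor carrying $\M_L(\d\times 1)$); the difference is in implementation. The paper (following Appendix A of the cited work, and as in Lemma \ref{lem:easy-high-low} here) smooths the point side by $\chi_\d$, puts the band-limited difference on the line side as $f*(\chi_{\d/2}-\chi_{2\d})$, and bounds the high term by splitting $L$ into direction classes and using $L^1$--$L^\infty$ together with the essential Fourier orthogonality of those classes in the plane. You instead band-limit the incidence kernel itself through the choice of $\varphi$ and compute $\|G\|_2^2$ pair by pair in physical space; the transversality cancellation for $\angle(\ell,\ell')\gg\d$ is exactly the physical-space face of the statement that the Fourier supports (radial $\sim\d^{-1}$ segments in the normal directions) are essentially disjoint, so the two accountings are equivalent, and yours is arguably more elementary.

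Concerning the step you flag as the main obstacle: it does close, but the bookkeeping is cleaner than ``boundary contributions near the crossing points'' suggests, and it needs one counting input you did not state. Choose the cutoff $\equiv 1$ on a fixed neighbourhood of $[0,1]^2$. For a transverse pair with $\theta=\angle(\ell,\ell')\gg\d$ whose crossing lies in the flat region, the full-plane cancellation is spoiled only by the part of the two strips lying where the cutoff varies, and there the strips are already separated by $\gtrsim\theta$, so the cross-term is $O_N\big(\d(\d/\theta)^N\big)$ rather than the $O(\d^3/\theta^2)$ one gets by Taylor-expanding a cutoff that varies near the crossing (that cruder gain of $(\d/\theta)^2$ would cost a logarithm). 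The terms that genuinely carry the bound are the pairs with angle and separation $O(\d)$ inside the window: they lie in a common $O(\d)\times 1$ tube, hence number $\lesssim \M_L(\d\times 1)$ for fixed $\ell$ (using the two-dimensional analogue of Proposition \ref{prop:easy-relations-M} to pass from $C\d$-tubes to $\d$-tubes) and contribute $O(\d)$ each. For the remaining dyadic ranges you need: for fixed $\ell$ and dyadic $\theta\ge\d$, the number of $\ell'\in L$ making angle $\sim\theta$ with $\ell$ and passing within $O(\d)$ of $\ell$ inside the window is $\lesssim(\theta/\d)^2\,\M_L(\d\times 1)$ (cover the $\theta$-arc of directions by $\theta/\d$ many $\d$-arcs and, for each, the admissible offsets by $\theta/\d$ tubes); combined with the rapid decay above and with $|L|\lesssim\d^{-2}\M_L(\d\times1)$ for the far pairs, the dyadic sums in angle and offset converge to $\lesssim\d\,\M_L(\d\times 1)$ per line, which is exactly the budget $\d|L|\M_L(\d\times1)$. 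One further caveat: your $\varphi$ (inverse transform of a bump) is Schwartz, not a $[0,1]$-valued function supported on $[-2,2]$ as in the paper's definition of $I(\d;P,L)$. This is harmless for the proposition itself, but the later use in Proposition \ref{prop:main-in-R2} exploits compact support to say $I(c\d;P,L)\lesssim|X|$; so either keep the paper's compactly supported $\varphi$ (then $\widehat{\psi}$ vanishes only to second order at the origin and the low-frequency piece must be estimated separately, as in Lemma \ref{lem:easy-high-low}) or redo that application with tail estimates.
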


See Appendix A in \cite{cohen2025lower} for a proof. In Section \ref{sec:high-low} we will use the Fourier-analytic approach of Guth--Solomon--Wang \cite{guth2019incidence} to prove versions of (\ref{eq:high-low-R2}) in three dimensions. In particular, by following the proof of Lemma \ref{lem:easy-high-low} one can recover (\ref{eq:high-low-R2}) with an $\d^{-\varepsilon}$-loss. For now, let us just briefly describe the proof strategy of (\ref{eq:high-low-R2}). To each $P$ and $L$ we can associate (blurred) indicator functions $g$ and $f$. Then the incidence number $B(\d)$ can be computed as a scalar product $\langle f, g\rangle$ of functions $f$ and $g$. By applying Plancherel, we can rewrite this as a scalar product of their Fourier transforms $\hat f$ and $\hat g$. Now we can split the Fourier transform $\hat f$ into the high frequency and low frequency parts, giving us a decomposition $f = f^{high}+f^{low}$. The low frequency is obtained by restricting $\hat f$ on a ball of radius $\d^{-1}/K$ around the origin. On the physical side, this corresponds to blurring $f$ by a bump function of radius $K\d$. This means that the low part of the scalar product $\langle g, f^{low}\rangle$ is essentially the normalized incidence count at scale $K\d$, i.e. it is essentially $B(K\d)$. Now we can use this to estimate using Cauchy--Schwarz $|B(\d)- B(K\d)| \le \langle g, f^{high}\rangle \le \|g\|_2 \|f^{high}\|_2$. We then convert each $L_2$-norm using orthogonality and $L_1-L_\infty$ to information about distribution of $P$ and $L$ on scale $\d$. After some computation, we get that the high part is controlled by the right hand side of (\ref{eq:high-low-R2}) (with an extra $\d^{-\varepsilon}$ loss).
 
Let us close this section by giving three sharp constructions for (\ref{eq:high-low-R2}):
\begin{itemize}
    \item[(i)] Let $P=\{0\}$ and let $L$ be a collection of $\sim\d^{-1}$ directionally separated lines through the origin. Then we have $|B(\d) - B(2\d)| \sim B(\d) \sim \d^{-1}$ and $\d^{-3} \frac{\M_P(\d)}{|P|} \frac{\M_L(\d)}{|L|} \sim \d^{-2}$, matching (\ref{eq:high-low-R2}).

    \item[(ii)] Dually,  Let $L=\{\ell_0\}$ for some line $\ell_0$ and let $P$ be a collection of $\sim\d^{-1}$ points on $\ell_0$. Then we have $|B(\d) - B(2\d)| \sim B(\d) \sim \d^{-1}$ and $\d^{-3} \frac{\M_P(\d)}{|P|} \frac{\M_L(\d)}{|L|} \sim \d^{-2}$, matching (\ref{eq:high-low-R2}).

    \item[(iii)] A much less trivial example comes from sharp examples for the Szemer\'edi--Trotter theorem: let $P$ and $L$ be sets of $N$ points and lines defining $\sim N^{4/3}$ incidences. Then for $\d = c N^{-2/3}$ one can verify that both sides of (\ref{eq:high-low-R2}) are $\sim 1$. See \cite{cohen2023new} for more details. 
\end{itemize}

So to make an improvement over (\ref{eq:high-low-R2}) we would need to put a spacing constraint on $(P, L)$ which would separate it from all 3 types of examples. 
The Szemer\'edi--Trotter example appears to make this a very challenging task 
and this is the key bottleneck for further improvements on Heilbronn's triangle problem in the plane. 

Rather surprisingly, the situation changes in three dimensions: for a direct analogue of (\ref{eq:high-low-R2}) in $\R^3$ (which we will discuss below) we only have two types of sharp examples roughly matching (i) and (ii) above, but the grid-like examples giving (iii) are no longer sharp. This allows us to give simple criteria under which the simple high-low estimate can be improved. 

\subsection{Point-line problem in two dimensions.}\label{subsec:2d-case}
Let us now give a proof of the two dimensional version of Theorem \ref{thm:main}. 


Let $X = \{(p_i \in \ell_i), i=1, \ldots, n\}$ be a collection of point-line incident pairs inside $[0,1]^2$. 
Let us define the {\em minimal distance} of $X$ as
\[
d(X) = \min_{(p, \ell) \neq (p', \ell') \in X} d(p, \ell').
\]

\begin{prop}\label{prop:main-in-R2}
    Let $\delta>0$ and $X$ be a collection of point-line pairs in $[0,1]^2$ with $d(X) \ge \delta$. Then we have $|X| \lesssim \delta^{-2+\gamma}$ for some absolute constant $\gamma>0$. In other word, the property $\operatorname{PL}_2(\gamma)$ holds for some $\gamma>0$.
\end{prop}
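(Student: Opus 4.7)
The plan is to apply the high-low inequality of Proposition~\ref{prop:high-low-R2} telescopically across dyadic scales $\rho\in[\delta,1]$, using the PL-separation $d(X)\geq\delta$ to control the concentration functions $\M_P(\rho)$ and $\M_L(\rho\times 1)$ appearing on the right-hand side of (\ref{eq:high-low-R2}). First, the observation that $d(p_i,p_j)\geq d(p_i,\ell_j)\geq\delta$ gives $\delta$-separation of the point set $P=\{p_i\}$, and hence $\M_P(\rho)\lesssim(\rho/\delta)^2$. On the line side, any two lines $\ell_j,\ell_k\in L$ approximately contained in a common thin tube must be nearly parallel and yet satisfy $d(p_j,\ell_k)\geq\delta$, which translates into a similar packing bound for $\M_L(\rho\times 1)$.

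Next, I would extract the extremal values of the normalized incidence function $B(\rho)$. The $n$ diagonal pairs $(p_i,\ell_i)$ force $I(\delta;P,L)\geq n$, hence $B(\delta)\gtrsim 1/(n\delta)$, while $I(1;P,L)\lesssim n^2$ gives $B(1)\lesssim 1$. In the interesting regime $n\gg\delta^{-1}$ the former is much smaller than the latter, so $B$ must grow by a factor $\sim 1$ from scale $\delta$ to scale $1$. Telescoping and applying Cauchy--Schwarz bound this total variation by the square root of
\[
\sum_{k}\rho_k^{-3}\,\frac{\M_P(\rho_k)}{n}\cdot\frac{\M_L(\rho_k\times 1)}{n},\qquad \rho_k=2^k\delta,
\]
and substituting the concentration bounds converts the lower bound $B(1)-B(\delta)\gtrsim 1$ into an upper bound on $n$.

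The main obstacle is obtaining a sufficiently tight bound on $\M_L(\rho\times 1)$: the concentration of lines in a thin tube is essentially dual to the concentration of points in a small square, and a naive direction-and-offset packing estimate yielding $\M_L(\rho\times 1)\lesssim(\rho/\delta)^2$ is already too weak to improve over the trivial $n\lesssim\delta^{-2}$. The gain comes from exploiting that the lines in any concentrated tube, together with their associated points, form a sub-PL-configuration with the same $\delta$-separation: within a direction bucket of width $\delta/\rho$, the associated points must be transversally $\delta$-separated inside a strip of width $\rho$, which should trim the estimate down to roughly $\M_L(\rho\times 1)\lesssim\rho/\delta$ and yields $n\lesssim\delta^{-3/2+o(1)}$ after telescoping. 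Alternatively, one can simply invoke Theorem~\ref{thm:point-line-R2}, which already establishes $\operatorname{PL}_2(\gamma)$ for all $\gamma<1/2$ via a more elaborate Szemer\'edi--Trotter-type input and therefore immediately yields Proposition~\ref{prop:main-in-R2}.
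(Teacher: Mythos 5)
Your outline has the right general shape (high--low telescoping plus the separation hypothesis), but the step that is supposed to produce the gain does not hold. First, a fixable slip: you derive a \emph{lower} bound $B(\delta)\gtrsim 1/(n\delta)$ and an \emph{upper} bound $B(1)\lesssim 1$, which cannot force $B$ to grow; what you need (and what is true) is the reverse pair, namely $I(c\delta;P,L)\lesssim n$ because only the diagonal pairs are $c\delta$-close, so $B(c\delta)\lesssim 1/(n\delta)\ll 1$, together with a lower bound $\gtrsim 1$ at some coarser scale. The serious gap is the claimed improvement $\M_L(\rho\times 1)\lesssim \rho/\delta$. Your bucket heuristic only bounds the lines in a single direction bucket of width $\delta/\rho$; there are $\sim\rho^2/\delta$ such buckets inside the angular window of a $\rho\times 1$ tube, and moreover two lines in the same bucket are only $\delta$-separated at the marked points $p_j$, not in offset along the whole tube. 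Worse, the statement itself is false: the pairs $(p_j,\ell_j)$ whose lines lie in a $\rho\times 1$ tube rescale (stretch the thin direction by $\rho^{-1}$) to a planar PL-configuration with separation $\sim\delta/\rho$, so bounding $\M_L(\rho\times 1)$ is \emph{equivalent} to the very statement being proved at scale $\delta/\rho$ — at $\rho\sim 1$ your claim literally asserts $|X|\lesssim\delta^{-1}$, and the paper points out (via the constructions showing $\operatorname{PL}_2(1-c)$ fails) that configurations of size $(\rho/\delta)^{1+c}$ inside a single tube exist. With the correct crude bounds $\M_P(\rho),\M_L(\rho\times1)\lesssim(\rho/\delta)^2$, telescoping from $\delta$ to scale $1$ gives only the trivial $n\lesssim\delta^{-2}$, as you yourself note, so the proposal as written does not close.

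The paper's proof supplies the missing ingredient differently: it keeps the crude concentration bounds (so each increment satisfies $|B(w)-B(w/2)|\lessapprox w^{1/2}$ under the contradiction hypothesis $|X|\approx\delta^{-2}$) and instead proves an \emph{initial estimate} at the intermediate scale $\Delta=\delta^{1/2}$: covering $P$ by $\Delta$-squares and $L$ by $\Delta\times 1$ tubes, the $\delta$-separation forces at most $\sim\Delta/\delta=\delta^{-1/2}$ of the $\approx\delta^{-1}$ points in a square to have their own lines in a common tube (projection in a nearly fixed direction), so each square meets $\gtrsim\delta^{-1/2}$ tubes and $B(C\delta^{1/2})\approx 1$. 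Telescoping only over $w\in[\delta,\delta^{1/2}]$ then accumulates error $\lessapprox\delta^{1/4}\ll 1$, contradicting $B(C\delta^{1/2})-B(c\delta)\gtrsim1$. Note that this is exactly why one must \emph{not} telescope up to scale $1$: there the accumulated error is $\sim1$ and no contradiction arises, which is the structural reason your plan needs (and cannot have) an improved $\M_L$ bound. Finally, while invoking Theorem \ref{thm:point-line-R2} does logically imply the proposition, it defeats its purpose: the proposition is stated precisely to give a simple self-contained argument independent of that much harder result, and this self-contained argument is what your proposal would need to supply.
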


Recall that \cite{cohen2025lower} showed that $\operatorname{PL}_2(1/2)$ holds. Here we present a much simpler argument than the one given in \cite{cohen2025lower}. By tracking down the estimates one can check that the proof below gives something like $\gamma=0.1$. 

\begin{proof}
For the sake of contradiction let us assume that $|X| \approx_\gamma \delta^{-2}$ holds. Let $P = P[X]$ and $L=L[X]$ be the sets of points and lines of the point-line configuration $X$. Note that we have $|P| = |L| = |X|$.

The condition $d(X) \ge \delta$ implies that points in $P$ are $\delta$-separated: indeed, for $(p, \ell), (p', \ell') \in X$ we have $d(p, p') \ge d(p, \ell') \ge \d$.
So for any $w \in (\delta, 1)$ we get
\begin{equation}\label{eq:P-in-Q}
\M_P(w) = \max_{Q \text{ }w\text{-square}}|P \cap Q| \lesssim (w/\delta)^2 \lessapprox_\gamma w^2 |P|.    
\end{equation}
Similarly, lines in $L$ must be $\delta$-separated and for $w\in (\d, 1)$ we get 
\begin{equation}\label{eq:L-in-T}
\M_L(w\times 1) =\max_{T\text{ }w\times 1\text{ tube}}|L \cap T| \lesssim (w/\delta)^2 \lessapprox_\gamma w^2 |L|.    
\end{equation} 

So by Proposition \ref{prop:high-low-R2}, we obtain
\begin{equation}\label{eq:Bw-difference-R2}
    |B(w) - B(w/2)|^2 \lesssim w^{-3} \frac{\M_P(w)}{|P|}  \frac{\M_L(w\times 1)}{|L|} \lessapprox_\gamma w.
\end{equation}

By the assumption, we have  $d(p, \ell') \ge \delta$ for any $p\in P$ and $\ell' \in L$ for which $(p,\ell')\not\in X$. This means that $P$ and $L$ define an abnormally small amount of incidences on scale $\d$: for a small constant $c>0$, we have $I(c\delta; P, L) \lesssim |X| \ll \d |P| |L|$ and so $B(c\delta) \lesssim \frac{|X|}{\delta |X|^2} \lessapprox_\gamma \delta$. 

On the other hand, we are now going show that $B(C\delta^{1/2}) \approx_{\gamma} 1$ holds. Using (\ref{eq:Bw-difference-R2}) for $w \in (c\d, C\d^{1/2})$ this will then lead to a contradiction for small enough $\gamma$. 

Denote $\Delta = \delta^{1/2}$ and let $\T$ be the set of essentially distinct $\Delta\times 1$ tubes which cover $L$ and let $\mc Q$ be the set of disjoint $\Delta$-boxes $Q$ covering $P$. 
Using $|P| ,|L| \approx_\gamma \delta^{-2}$ we see that $|\T|, |\mc Q| \approx_\gamma \delta^{-1}$. After a passing to a large subset in $X$ we may assume that every $T\in \T$ contains $\approx_{\gamma} \d^{-1}$ lines $\ell \in L$ and every $Q\in \mc Q$ contains $\approx_\gamma\d^{-1}$ and points from $P$. Using this, we can lower bound 
\begin{equation}\label{eq:lower-bound-incidence-R2}
I(C\Delta; P,L) \gtrapprox_\gamma \d^{-2}  I(\mc Q, \T)   
\end{equation}
where $I(\mc Q, \T)= \#\{(Q, T):~ Q\cap T\neq\emptyset\}$.
Fix $Q \in \mc Q$ and let us estimate the number of $T\in \T$ which intersect with $Q$. Summing over all $Q$ will give us a lower bound on the number of incidences between $\mc Q$ and $\T$. For every point $p \in P\cap Q$ we have a unique line $\ell = \ell(p)$ so that $(p,\ell) \in X$. Every such line $\ell(p)$ is covered by some tube $T =T(p)\in \T$. In particular, we have $T(p) \cap Q \neq\emptyset$ for every $p \in P\cap Q$. For a fixed $T\in \T$ let $p_1,\ldots, p_m$ be the set of points in $P\cap Q$ such that $T(p_j) = T$ for every $j$. Note that the directions of lines $\ell(p_j)$, $j=1, \ldots, m$, lie in an arc of length $\sim \Delta$ determined by the main axis of $T$. So by pigeonhole principle we can find a direction $\theta_0 \in S^1$ so that $|\theta(\ell(p_j))-\theta_0| \le  \Delta/100$ for $\gtrsim m$ indices $j\in [m]$. Let $j, j'$ be any two such indices. Since $d(p_{j'}, \ell(p_j))\ge \delta$ and $p_j, p_{j'} \in Q$, simple geometry implies that $|\pi_{\theta_0}(p_{j'})-\pi_{\theta_0}(p_{j})| \ge \delta/2$, where $\pi_{\theta_0}$ is the orthogonal projection in the direction of $\theta_0$. So since $\pi_{\theta_0}(p_j) \in \pi_{\theta_0}(Q)$ for any such $j$, we conclude that $m \lesssim \Delta/\delta = \delta^{-1/2}$. Thus,
\[
\#\{T\in \T:~ T\cap Q\neq\emptyset \} \gtrsim |P \cap Q| / m \gtrapprox_\gamma \delta^{-1/2}.
\]
 It follows that $I(\mc Q, \T) \gtrapprox_\gamma \delta^{-3/2}$ and so by (\ref{eq:lower-bound-incidence-R2}),  $I(C\Delta; P, L)\approx_\varepsilon \Delta |P| |L|$, i.e. $B(C\Delta) \gtrapprox_\varepsilon 1$. By combining this with $B(c\d) \lesssim_\varepsilon \d$ and (\ref{eq:Bw-difference-R2}), we arrive at a contradiction for sufficiently small $\gamma$. 
\end{proof}

\subsection{Higher dimensions}
Let us see how the high-low approach would work in $\R^d$.
Given collections of points $P$ and lines $L$ in $[0,1]^d$, we can define incidence functions $I(\d; P, L) = \sum_{p\in P, \ell\in L} \varphi(d(p, \ell))$ and $B(\d) = \frac{I(\d;P, L)}{\d^{d-1} |P||L|}$ similarly to $\R^2$ (note that in $\R^d$ we expect $I(\d) \approx \d^{d-1} |P||L|$). We can also define the concentration numbers
\begin{align*}
&\M_P(\d) = \max_{Q:~\d\text{ cube}} |P\cap Q|,\\
&\M_L(\d\times\ldots \times\d\times 1) = \max_{T:~\d\times 1\text{ tube}} |L\cap T|
\end{align*}
With these definitions, a direct analogue of Proposition \ref{prop:high-low-R2}, is the following estimate.
\begin{prop}\label{prop:high-low-Rd}
    For an appropriate bump function $\varphi$ and all $P$ and $L$ in $\R^d$ we have
    \begin{equation}\label{eq:high-low-Rd}
        |B(\d) - B(2\d)|^2 \lesssim_\varepsilon \d^{-\varepsilon} \d^{-3(d-1)} \frac{\M_P(\d)}{|P|} \frac{\M_L(\d\times\ldots \times\d\times 1)}{|L|}.
    \end{equation}
\end{prop}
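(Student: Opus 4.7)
The plan is to implement the Fourier-analytic high-low machinery of Guth--Solomon--Wang in $\R^d$, following the two-dimensional sketch given after Proposition \ref{prop:high-low-R2}. To each point $p \in P$ I would attach a smooth bump $g_p$ approximating the indicator of the $\d$-ball centered at $p$, and to each line $\ell \in L$ a smooth bump $f_\ell$ approximating the indicator of the $\d$-tube $T_\ell$. Setting $g = \sum_{p\in P} g_p$ and $f = \sum_{\ell\in L} f_\ell$, I would choose $\varphi$ in the definition of $B(\d)$ so that $\langle g, f\rangle$ equals, up to a dimensional constant, $\d^d I(\d;P,L) = \d^{2d-1}|P||L|B(\d)$; the $\d^d$ factor is the volume of overlap of a $\d$-ball with a $\d$-tube passing near it.

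Next I would apply Plancherel and cut $\hat f = \hat f^{\mathrm{low}} + \hat f^{\mathrm{high}}$ where $\hat f^{\mathrm{low}} = \hat f \cdot \mathbbm{1}_{|\xi|\le K^{-1}\d^{-1}}$ for some absolute constant $K\sim 2$. On the physical side $f^{\mathrm{low}}$ is $f$ convolved with a bump of width $K\d$, hence essentially $\sum_\ell \mathbbm{1}_{T_{K\d,\ell}}$, so $\langle g, f^{\mathrm{low}}\rangle$ is proportional to $\d^{2d-1}|P||L|B(2\d)$. Subtracting yields
\[
B(\d)-B(2\d) \;\sim\; \frac{\langle g, f^{\mathrm{high}}\rangle}{\d^{2d-1}|P||L|},
\]
and by Cauchy--Schwarz it remains to control $\|g\|_2^2 \|f^{\mathrm{high}}\|_2^2$. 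The point-side bound is immediate from $L^1$--$L^\infty$: $\|g\|_2^2 \le \|g\|_\infty \|g\|_1 \lesssim \M_P(\d) \cdot |P|\d^d$.

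The main obstacle is the line-side $L^2$ bound: the trivial estimate $\|f^{\mathrm{high}}\|_2^2 \le \|f\|_2^2 \le \|f\|_\infty\|f\|_1 \lesssim |L|\M_L(\d\times\cdots\times\d\times 1)$ is lossy by $\d$, which is harmless in dimension two but fatal from $d=3$ onward; I need $\|f^{\mathrm{high}}\|_2^2 \lessapprox_\varepsilon \d\,|L|\M_L$. To squeeze out the missing $\d$ factor I would exploit the wave-packet structure: $\hat f_\ell$ is concentrated on a plate $\Pi_\ell^\ast$ of dimensions $1 \times \d^{-1}\times\cdots\times\d^{-1}$ perpendicular to $\ell$, the high-frequency cutoff $|\xi|\ge K^{-1}\d^{-1}$ pushes $\hat f_\ell^{\mathrm{high}}$ onto the outer annular part of this plate, and for two lines with angle $\theta \gg \d$ between them the plates $\Pi_\ell^\ast$ and $\Pi_{\ell'}^\ast$ intersect only in a thin wedge. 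Pigeonholing over dyadic angular scales (producing the $\d^{-\varepsilon}$ loss), grouping pairs of lines sharing a common $\d\times 1$ tube into the $\M_L$ factor, and using plate-transversality for the remaining cross-terms at larger angles should yield the required estimate.

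Combining everything gives
\[
|B(\d)-B(2\d)|^2 \;\lesssim_\varepsilon\; \d^{-\varepsilon}\frac{(\d^d|P|\M_P)(\d\,|L|\M_L)}{\d^{4d-2}|P|^2|L|^2} \;=\; \d^{-\varepsilon}\,\d^{-3(d-1)}\,\frac{\M_P(\d)}{|P|}\,\frac{\M_L(\d\times\cdots\times\d\times 1)}{|L|},
\]
which is exactly \eqref{eq:high-low-Rd}. The decisive step is the sharp $L^2$ estimate for $f^{\mathrm{high}}$; this is essentially the content of Appendix~A of \cite{cohen2025lower} in dimension two, and the higher-dimensional version should proceed along analogous lines with a wave-packet decomposition indexed by tube direction and dyadic angular scale.
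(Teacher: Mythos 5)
Your frame (smoothing at scale $\d$, Plancherel, a high/low cut, Cauchy--Schwarz, and $L^1$--$L^\infty$ on the point side) is the same as the paper's, and your arithmetic correctly isolates the one estimate that carries all the content: in your unnormalized convention, $\|f^{\mathrm{high}}\|_2^2 \lessapprox_\varepsilon \d\,|L|\,\M_L(\d\times\cdots\times\d\times 1)$. The gap is that your proposed proof of this estimate --- dyadic pigeonholing in angle plus ``plate-transversality for the remaining cross-terms'' --- does not work. Bounding each cross-term in absolute value by the measure of the plate intersection is the same as bounding it by $|T_\ell\cap T_{\ell'}|\sim \d^{d-1}\min(1,\d/\theta)$, and the high-frequency cutoff gives no pairwise gain for transverse pairs (the wedge $\Pi_\ell^\ast\cap\Pi_{\ell'}^\ast$ lies almost entirely in the annulus $|\xi|\sim\d^{-1}$); so this route can only ever bound $\|f\|_2^2$, not $\|f^{\mathrm{high}}\|_2^2$, and that is too large by a factor $\d^{-1}$ in general. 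Concretely, for $d=3$ take $L$ a maximal $\d$-separated family of all lines meeting the unit ball: $|L|\sim\d^{-4}$, $\M_L(\d\times\d\times1)=O(1)$, and $\sum_{\ell\neq\ell'}|T_\ell\cap T_{\ell'}|\sim\d^{-4}$, while the bound you need is $\d\,|L|\,\M_L\sim\d^{-3}$; the proposition survives there only because $f$ is nearly constant and the high projection removes the bulk of its $L^2$ mass --- a cancellation invisible to absolute-value cross-term estimates. A second symptom: counting pairs at angle $\sim\theta$ with intersecting tubes requires $\M_L(\theta\times\theta\times1)$, and converting to $\M_L(\d\times\cdots\times1)$ via the Lipschitz bound (Proposition \ref{prop:easy-relations-M}) costs $(\theta/\d)^{2(d-1)}$, which the transversality gain $\d/\theta$ does not compensate. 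Relatedly, your ``trivial bound'' $\|f\|_\infty\lesssim\M_L$ is false as stated (a bush of $\d^{-(d-1)}$ lines through one point has $\|f\|_\infty\sim\d^{-(d-1)}$ with $\M_L=O(1)$); an $L^\infty$ bound by $\M_L$ is only legitimate after restricting to lines in a fixed direction cap, since only then do the lines near a common point lie in a common tube.

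The paper's proof of this step (Lemma \ref{lem:easy-high-low}, stated for $d=3$) is not pairwise. On each frequency annulus $|\xi|\sim w^{-1}$ it splits $L$ into $w$-direction caps $L_\theta$, proves the per-cap diagonal estimate $\int_{|\xi|\sim w^{-1}}|\widehat{F_\theta}|^2\lesssim w^{-2}|L_\theta|\,\M_L(w\times w\times1)\,|L|^{-2}$ by $L^1$--$L^\infty$ in physical space (this is where $\M_L$ enters, and it is exactly here that the cap restriction is needed), and then uses the key geometric fact that a fixed frequency $\xi$ only receives contributions from caps with $d(\theta,\theta(\xi)^\perp)\le Kw$: a band of width $Kw$ on $S^{d-1}$ contains $\lesssim Kw^{2-d}$ of the $\sim w^{1-d}$ caps, i.e.\ a fraction $\sim Kw$ of them. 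A pointwise Cauchy--Schwarz with this reduced multiplicity (which keeps only diagonal terms, so the cancellation you need is exploited rather than discarded) gives $\int_{|\xi|\sim w^{-1}}|\widehat F|^2\lesssim Kw^{-3}\M_L(w\times w\times1)/|L|$, precisely the missing factor of $w$. The sum over $w$ then closes because the loss $\M_L(w\times w\times1)\lesssim(w/\d)^{2(d-1)}\M_L(\d\times\cdots\times1)$ is cancelled by the decay of the high-pass multiplier $\widehat{\chi_{\d/2}}-\widehat{\chi_{2\d}}$ below frequency $\d^{-1}$, and Schwartz tails are absorbed by taking $K=\d^{-\varepsilon/10}$, which is where the $\d^{-\varepsilon}$ comes from. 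To repair your argument you would need to replace the cross-term transversality step by this band-multiplicity count together with the per-cap $L^1$--$L^\infty$ bound; as written, the decisive estimate is missing.
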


See Lemma \ref{lem:easy-high-low} for a proof of this bound in $\R^3$ (the proof in other dimensions is identical). An estimate like this was first proven by Guth--Solomon--Wang \cite{guth2019incidence} to study well-spaced collections of tubes in $\R^3$. The proof strategy goes along the same lines as the outline we gave in Section \ref{subsec:high-low-R2} for the two-dimensional estimate. The main difference is in the estimate of the $L_2$-norm of the high term $\|f^{high}\|_2$: in the plane one can use separation in Fourier space to to split $f^{high}$ into a sum of (essentially) pairwise orthogonal pieces $f^{high}_\theta$ (essentially by grouping lines according to their direction). Then each piece $f^{high}_\theta$ can be bounded by orthogonality using physical separation. In higher dimensions however, the corresponding pieces $f^{high}_{\theta}$ are no longer separated in the Fourier space. The Fourier transform of each $f^{high}_{\theta}$ looks like a great circle on the $(d-1)$-dimensional sphere $S^{d-1}$ and so the overlap pattern of these great circles controls the size of the $L_2$-norm. In general, we do not have that much information about the great circles, but by using a crude estimate on the overlap, one can use this geometric description to deduce (\ref{eq:high-low-Rd}).

There are two main cases where this estimate is sharp:
\begin{itemize}
    \item[(i)] Let $P = \{0\}$ and $L$ be a collection of $\sim \d^{-(d-1)}$ directionally separated lines passing though the origin. Then we have $|T_\d(\ell) \cap P| = 1 = \d^{-(d-1)}(\d^{d-1} |P|)$ and so
    $|B(\d) -B(2\d)| \sim B(\d) \sim \d^{-(d-1)}$. On the other hand, 
    \[ 
    \d^{-3(d-1)}\frac{\M_P(\d)}{|P|} \frac{\M_L(\d\times\ldots \times\d\times 1)}{|L|} \sim \d^{-2(d-1)} 
    \] 
    matching (\ref{eq:high-low-Rd}). 

    Furthermore, by taking a union of $c \d^{-(d-1)}$ random translates of this construction, we get a collection of $\sim \d^{-(d-1)}$ points and $\sim \d^{-2(d-1)}$ lines such that $|B(\d)-B(2\d)| \sim B(\d) \sim 1$ and the high-low error (\ref{eq:high-low-Rd}) is also $\approx_\varepsilon1$. 
    
    \item[(ii)] Let $P$ and $L$ be sets of $\sim\d^{-(d-1)}$ points and $\sim \d^{-2(d-2)}$ lines contained is a fixed hyperplane. Then we typically have $|T_\d(\ell) \cap P| \sim \d^{d-2}|P| = \d^{-1} (\d^{d-1}|P|)$ and so $|B(\d)-B(2\d)| \sim B(\d) \sim \d^{-1}$. On the other hand, 
    \[
    \d^{-3(d-1)}\frac{\M_P(\d)}{|P|} \frac{\M_L(\d\times\ldots \times\d\times 1)}{|L|} \sim \d^{-2},
    \] 
    matching (\ref{eq:high-low-Rd}).

    Furthermore, by taking a union of $c \d^{-1}$ random translates of this construction, we get a collection of $\sim \d^{-d}$ points and $\sim \d^{-2d+3}$ lines such that $|B(\d)-B(2\d)| \sim B(\d) \sim 1$ and the high-low error is also $\sim 1$. 
\end{itemize}

Now let $X= \{(p\in \ell)\}$ be a point-line configuration in $[0,1]^d$ and suppose that 
\[
d(X)=\min_{(p, \ell) \neq (p', \ell')\in X} d(p, \ell') \ge \d.
\] 
From this condition it follows that $P=P[X]$ is a $\d$-separated set of points and $L=L[X]$ is a $\d$-separated set of lines in $[0,1]^d$. So we get $|X| \le |P| \lesssim \d^{-d}$. Our goal would be to improve on this estimate by a polynomial factor $\d^\kappa$ for some constant $\kappa>0$. For the sake of contradiction let us suppose that $|X| \approx_\kappa \d^{-d}$. As in the two-dimensional case, we have very few incidences on scale $\d$:
\[
I(c\d; P, L) \le |X| \ll \d^{d-1} |P| |L|
\]
and so we get $B(c\d) = \frac{I(c\d; P, L)}{\d^{d-1}|P||L|} \lessapprox_\kappa \d$. 

On the other hand, let us assume that we managed to prove that $B(\d^{\alpha} ) \approx_\kappa 1$ for some $\alpha\in (0,1)$. We would like to use the high-low method to reach a contradiction by showing that the difference $|B(w)-B(w/2)|$ for all $w \in (\d, \d^\alpha)$ is much less than 1. For this approach to work, we at the very least have to be able to show that $|B(\d)-B(2\d)| \ll 1$. The high-low inequality (\ref{eq:high-low-Rd}) gives us:
\begin{equation}\label{eq:easy-high-low-fail}
|B(\d)-B(2\d)|^2 \lesssim_\varepsilon \d^{-\varepsilon} \d^{-3(d-1)} \frac{\M_P(\d)}{|P|} \frac{\M_L(\d\times\ldots \times\d\times 1)}{|L|} \approx_\kappa \d^{3-d},
\end{equation}
where we used the estimates $\M_P, \M_L \lesssim 1$ coming from the fact that points in $P$ and lines in $L$ are pairwise $\d$-separated.
If $d=2$ then this bound gives $|B(\d)-B(2\d)|\lesssim_\kappa \d^{1/2}\ll 1$ and this allowed us to run the high-low argument in Section \ref{subsec:2d-case}. But for $d\ge 3$ the right hand side is no longer small enough. Examples (i), (ii) above show that this bound is sharp in general so this is a serious obstruction to this approach. 
The situation in case of $d=3$ is quite special however: the right hand side of (\ref{eq:easy-high-low-fail}) exceeds $1$ only by a factor of the form $\d^{-C\kappa+o(1)}$ where $\kappa$ is our initial error term in the assumption $|X| \approx_\kappa \d^{-d}$. So if we could squeeze out a tiny saving from the bound (\ref{eq:high-low-Rd}) then it might give us a fighting chance. 

This idea turns out to work: in Section \ref{sec:high-low} we prove two different estimates which improve (\ref{eq:high-low-Rd}) by a polynomial factor for $d=3$ under certain geometric constraints on $L$. Note that in example (ii) there are lots of lines contained in a fixed plane. One can also construct similar examples where lines concentrate a lot in neighborhoods of several planes. So a necessary condition to improve on (\ref{eq:high-low-Rd}) is that $L$ does not have too many lines in a neighborhood of a plane. Let $\M_L(u\times w\times 1)$ be the maximum number of lines $\ell \in L$ contained in a $u\times w\times 1$ box $\Pi$ (in the sense that $|\Pi \cap \ell| \ge 1/2$). The example (ii) then satisfies $\M_L(\d\times 1\times 1) \sim \d^{-2}$ which is the maximum possible value for a set of $\d$-separated lines $L$. 

Luckily for us, a situation like this is impossible for the set of lines $L=L[X]$ coming from our point-line configuration $X$ satisfying $d(X) \ge \d$ and $|X| \approx_\kappa \d^{-3}$. 
Namely, using the 2-dimensional case of the point-line problem (i.e. that $\operatorname{PL}_2(3/2)$ holds by Theorem \ref{thm:point-line-R2}), we can get a good estimate on the number of lines near any plane. In particular, we can get a non-concentration estimate $\M_L(\d \times 1\times 1) \le \d^{-3/2+o(1)} \ll \d^{-2}$.
Indeed, for a $\d\times 1\times 1$ slab $\Pi$ we may essentially treat the set of pairs $(p, \ell) \in X$ such that $|\ell \cap \Pi|\ge 1/2$ as a point-line configuration in the plane. 

A slight generalization of this idea implies that we have the following spacing condition of the set of lines $L$:
\begin{equation}\label{eq:spacing-condition}
\M_L(u\times w\times 1) \ll (u/\d) \times (w/\d)^2, \text{ for all }\d\le u\le w \le 1
\end{equation}
(where $\ll$ refers to some polynomial gain, possibly involving powers of $u,w$, see Proposition \ref{prop:plane-reduction} for the precise claim).

Our first improvement over (\ref{eq:high-low-Rd}) for $d=3$ has the following shape:
\begin{equation}\label{eq:first-high-low-gain}
\text{If }|\theta(L)|_\d \ll \d^{-2} \text{ and }(\ref{eq:spacing-condition}),\text{holds, then }|B(\d)-B(2\d)|\ll1,    
\end{equation}
where $\theta(L) \subset S^2$ is the set of directions of lines $\ell\in L$, $|\cdot|_\d$ denotes the $\d$-covering number. 
Note that the restriction on the number of directions in (\ref{eq:first-high-low-gain}) implies that $(P, L)$ cannot look like the sharp bush example (i) (since the number of lines passing through a fixed point is upper bounded by the number of directions) 
and condition (\ref{eq:spacing-condition}) implies that $L$ cannot look like the sharp plane example (ii). 
One drawback of this estimate is that it only works if $L$ does not span all possible directions. There is no reason to expect that this is going to hold for the set of lines $L$ coming from a point-line configuration $X$ with $d(X)\ge \d$. The good news is that restrictions on the direction set appear naturally when we start rescaling into smaller boxed and this is related to how we use the information about directions in small boxes in the `initial estimate' argument from Section \ref{subsec:2d-case}.

The second improvement has the following shape:
\begin{align}\label{eq:second-high-low-gain}
    \text{If }L \text{ is well-spaced on scale }\d^{1/2}\text{ and }(\ref{eq:spacing-condition})\text{ holds, then }|B(\d)-B(2\d)|\ll1.
\end{align}
Here the condition `well-spaced on scale $\Delta=\d^{1/2}$' means that for any $\d^{1/2}\times \d^{1/2}\times 1$ tube $T$ we have an estimate $|T \cap L| \lessapprox \Delta^4 |L| $. In other words, we require an almost maximum possible number of $\Delta$-tubes to cover $L$. Note that the bush example (i) does not satisfy this property: we only need $\sim \Delta^{-2}$ $\Delta$-tubes to cover each bush and there are only $\sim \d^{-1}$ bushes overall. Again the condition (\ref{eq:spacing-condition}) makes sure that $L$ does not look like example (ii). The `well-spaced' condition originates from Guth--Solomon--Wang \cite{guth2019incidence} and is used there is a similar manner to improve the bound on the high term. 

Coming back to our point-line configuration $X$, there is no reason for lines to concentrate in any prisms or tubes, so
it is natural to expect that $L$ satisfies both premises of (\ref{eq:second-high-low-gain}). So at least in a `typical' situation we can now get the desired improvement in (\ref{eq:easy-high-low-fail}). However there is a significant complication: in order to get a contradiction and show $|X| \ll \d^{-3}$ we need to improve over the simple high-low estimate (\ref{eq:high-low-Rd}) for every $w \in (\d,\d^\alpha)$, not just the end-point case $w=\d$, and we also have not yet explained how do we obtain the initial estimate $B(\d^\alpha) \approx 1$. 

If (\ref{eq:second-high-low-gain}) does not apply, then this means that $L$ is not well-spaced on scale $\Delta$, i.e. we can cover $L$ by $\lesssim_\kappa \Delta^{-4+\beta}$ $\Delta$-tubes for some $\beta>0$. Let $\T$ denote the family of tubes covering $L$ (by pigeonholing, we know that each tube contain about the same number of lines). It follows that for a typical $\Delta$-box $Q$ intersecting $P$ we then have $\approx_\kappa \Delta^2 |\T| \lesssim \Delta^{-2+\beta}$ tubes from $\T$ which intersect with $Q$ (cf. with the initial estimate part in Section \ref{subsec:2d-case}). Consider an affine rescaling map $\psi: Q\to [0,1]^3$ and let $P' = \psi(P\cap Q)$ and $X' = \{\psi(p\in \ell), ~p\in P\cap Q\}$ the corresponding point-line configuration. We then have $|X'| \approx_\kappa \Delta^{-3}$ and $d(X') \ge \Delta$, i.e. $X'$ is an essentially sharp example for the point-line problem on scale $\Delta$. On the other hand, observe that for $L' = L[X']$ we have
\[
|\theta(L')|_\Delta \lesssim \#\{T\in \T:~T\cap Q\neq\emptyset \} \lesssim \Delta^{-2+\beta}.
\]
So we constructed a new point-line configuration $X'$ essentially matching the trivial upper bound and which now determines a reduced amount of directions on scale $\Delta$. So we can use (\ref{eq:first-high-low-gain}) on $X'$ to improve the simple high-low estimate at scale $w=\Delta$. This is not the end of the story -- we still need to improve the high-low error for every scale $w>\Delta$. The restricted direction set property will ensure that (\ref{eq:first-high-low-gain}) is still applicable for $w$ within a small multiple of $\Delta$ (say $w \le \Delta^{1-\tau}$ for some small constant $\tau>0$). For larger $w$ we no longer have useful direction information, so we can only rely on the estimate (\ref{eq:second-high-low-gain}). If this estimate fails then this means that $L'$ is not well-spaced on scale $w^{1/2}$. So by rescaling into a $w^{1/2}$ box we get a new configuration $X''$ with restricted direction set on scale $w^{1/2}$. Continuing in this manner we slowly increase the range of scales where the bound (\ref{eq:first-high-low-gain}) applies and eventually we rescale into a final set $X^*$ for which we can apply (\ref{eq:first-high-low-gain}) for all $w \in [\d^*, \d^{*\alpha}]$ (where $\d^{*\alpha}$ is a scale on which we have available a sufficiently strong initial estimate). By carefully tracking the parameters, we show that we only need perform constantly many rescaling steps and so this procedure eventually terminates. In the end, we obtain a configuration $X^*$ for which we know that $B(\d^{*\alpha}) \approx_\kappa 1$, $B(\d^*) \lessapprox_\kappa \d^*$ and the high-low error is $\ll 1$ for every $w \in [\d^*, \d^{*\alpha}]$, which gives us a contradiction for sufficiently small $\kappa$. 
(In the actual proof in Section \ref{sec:main-proof}, this argument is arranged slightly differently in order to make tracking of parameters a little cleaner.)



\section{Incidence estimates}\label{sec:incidence}

\subsection{Two-ends decomposition}

\noindent In the next section we prove high-low inequalities by analyzing neighborhoods of lines. On the Fourier side, $\d\times \d\times 1$ tubes become $\d^{-1}\times \d^{-1}\times 1$ slabs through the origin. If we study the overlap patterns of these slabs on $S^2$ then we see $\sim \d\times 1$ tubes on the surface of the sphere. The following lemma allows us to cut these tubes into a small number of pieces so that the resulting pieces do not overlap too much. This estimate on the overlap will be a key input into an $L^2$ estimate of the high frequency part in the proof of Theorem \ref{thm:refined_high-low}. 

This lemma is a version of Wolff's `two-ends' argument which gives an upper bound on the set of very rich points for a collection of tubes provided that those points are well-spread out on every tube. In our situation, we do not necessarily have such a condition in place and instead we reduce to such a condition but cutting out short segments where rich points concentrate too much.

\begin{lemma}\label{lem:two-ends}
    Let $1> \Delta > \delta > 0$. 
    For any collection of distinct $1\times \delta$ tubes $\T$, there exists a collection of essentially distinct $ 8\delta \times \Delta$ tubes $\mathbb U$ with the following property. For $T \in \T$ there is a set $\mathbb U(T) \subset \mathbb U$ of size at most $C \log_{2/\Delta} (1/\delta)$ such that the collection of sets
    \[
    2T \setminus \bigcup_{U \in \mathbb U(T)} U
    \]
    is at most $C \Delta^{-2} |\T|^{1/2}$ overlapping. Moreover, each $U \in \mathbb U(T)$ has the property that $T$ intersects the boundary of $U$ in the sides of length $8\d$. 
\end{lemma}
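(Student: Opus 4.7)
The approach is a greedy Wolff-style two-ends reduction, refined by a dyadic/$L^2$ argument to obtain the per-tube logarithmic bound. Set $\mu := C_0 \Delta^{-2} |\T|^{1/2}$ for a large absolute constant $C_0$. For each $T \in \T$, restrict the candidate elements of $\mathbb U(T)$ to the canonical family of $8\delta \times \Delta$ sub-tubes obtained by tiling $T$'s long axis with length-$\Delta$ intervals and taking their $8\delta$-thickenings; each such piece is crossed by $T$ through the two short sides of length $8\delta$, matching the boundary condition in the lemma.

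The procedure initializes $\mathbb U(T) = \emptyset$ for every $T$, and iterates the following: while some point $x \in \R^3$ is contained in more than $\mu$ of the residual sets $2T \setminus \bigcup_{U \in \mathbb U(T)} U$, select a $\Delta$-ball $B$ around $x$, and append the canonical piece of each participating tube $T$ around $B$ to $\mathbb U(T)$. The process must halt (each step decreases a monovariant counting the number of tube-point incidences), and upon termination the residuals are $\mu$-overlapping by construction. Finally, define $\mathbb U$ to be a pruning of $\bigcup_T \mathbb U(T)$ to a maximal essentially distinct subcollection, which only changes the overlap and cut counts by absolute constants.

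The substantive step is the per-tube bound $|\mathbb U(T)| \leq C \log_{2/\Delta}(1/\delta)$. Fix $T$ and stratify the cuts made to $\mathbb U(T)$ by the richness $N_j \in [\mu, |\T|]$ of the popular ball at the $j$-th cut; I would dyadically group the cuts into windows $N_j \sim 2^\ell$. Within each window, apply a Cauchy--Schwarz / Kakeya-type bilinear estimate on the tube-intersection form $\sum_{T,T'}|2T \cap 2T'|$: the $|\T|^{1/2}$ factor in $\mu$ is precisely what comes out of such an $L^2$ Kakeya bound, so each richness window contributes only boundedly many rich balls along $T$. Summing across the effective range of richness scales --- which, after relating the number of scales to the parameters $\delta$ and $\Delta$ via the Kakeya pairwise intersection bound, is at most $\log_{2/\Delta}(1/\delta)$ --- gives the stated per-tube bound.

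The main obstacle, as expected, is this per-tube count. A naive greedy count gives only $|\mathbb U(T)| \lesssim 1/\Delta$, which is far too weak when $\Delta$ is small. Extracting the logarithmic saving requires using the $|\T|^{1/2}$ appearance in $\mu$ as an $L^2$/Kakeya quantity rather than treating it combinatorially, and calibrating the dyadic richness decomposition so that the total window count matches $\log_{2/\Delta}(1/\delta)$. The rest of the lemma --- the $\mu$-overlap of residuals, the short-side boundary crossing, and the essential distinctness of $\mathbb U$ --- is built directly into the definition of the canonical sub-tube family and the pruning step.
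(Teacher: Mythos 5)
Your overall architecture (greedy removal of popular $\Delta$-pieces, Cauchy--Schwarz with the ``few tubes through two $\Delta$-separated points'' fact to explain the $|\T|^{1/2}$ in the threshold, canonical coaxial pieces to get the boundary condition) is in the right spirit, but the crucial step --- the per-tube bound $|\mathbb U(T)|\le C\log_{2/\Delta}(1/\delta)$ --- is not actually established by your dyadic-richness accounting, and as stated it does not work. First, the number of dyadic windows between $\mu=C_0\Delta^{-2}|\T|^{1/2}$ and $|\T|$ is $\sim\log_2\bigl(\Delta^2|\T|^{1/2}\bigr)$, which for instance when $\Delta=\delta^{1/2}$ and $|\T|\sim\delta^{-2}$ is of order $\log(1/\delta)$, whereas the target $\log_{2/\Delta}(1/\delta)$ is then $O(1)$; so even granting one cut per window you overshoot badly. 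Second, the claim that each window contributes boundedly many rich balls along a fixed $T$ is not justified: the natural bilinear estimate (bushes at $k$ many $\Delta$-separated rich points on $T$, Cauchy--Schwarz, and $\lesssim\Delta^{-1}$ tubes through a $\Delta$-separated pair) gives $k\lesssim |\T|^2/N^2$, which at the bottom level $N\sim\mu$ is $\sim\Delta^4|\T|$, typically enormous. Nothing in your greedy ball-selection forces a given tube to participate in only $O(1)$ cuts per richness level.

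The paper gets the logarithm by a different mechanism, which is exactly what your sketch is missing: the process is organized into synchronized global rounds, and in round $i$ \emph{every} tube $T$ removes the coaxial $\tfrac12\Delta\times 4\delta$ segment chosen \emph{maximally}, i.e.\ the one capturing the largest number of currently $r$-rich net points on $T$ (hence at least a $c\Delta$-fraction of them). Combining this maximality with Cauchy--Schwarz and the $\lesssim\Delta^{-1}$ bound on tubes through a $\Delta/4$-separated pair yields $|\mc P_r(\T_{i+1})|\le \tfrac{\Delta}{2}|\mc P_r(\T_i)|$, a geometric decay of the \emph{global} rich-point count by a factor $\Delta$ per round. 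Since the initial count is $\lesssim\delta^{-2}$, the rich set is empty after $m=C\log_{2/\Delta}(1/\delta)$ rounds, and each tube accrues at most one cut per round --- that is where the per-tube bound comes from. Without the maximal (adaptive) choice of the cut segment and the factor-$\Delta$ (rather than factor-$2$) decay per step, I do not see how your version of the argument can reach the stated bound; replacing the canonical tiling by maximizing segments and restructuring the greedy loop into synchronized rounds with a global decay estimate would essentially reproduce the paper's proof.
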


\begin{proof}
    Fix a set of tubes $\T$, without loss of generality we may assume that $\T$ is contained in $[-2,2]^2$. 
    For $i=1, \ldots, m$ and $T \in \T$ we will construct a sequence $U_{i}(T)$ of $\frac{1}{2}\Delta \times 4\delta$ tubes coaxial with $T$ as follows. For $i \ge 0$ suppose we already constructed $U_{j}(T)$ for $j \le i$. 

    Fix a $\delta/10$-net $\mc P$ in $[-2,2]^2$. For a collection of sets in the plane $\mathbb S$ let $\mc P_r(\mathbb S)$ denote the set of points $p \in \mc P$ which are contained in at least $r$ sets from $\mathbb S$. Fix $r = C_1 \Delta^{-2} |\T|^{1/2}$ for some constant $C_1$ which we will specify.

    For $T \in \T$ denote 
    \[
    T_i := 4T \setminus \bigcup_{j=1}^{i} U_{j}(T),
    \]
    and $T_0 = 2 T$. Given $T \in \T$, we define $U_{i+1}(T)$ to be a $\frac{1}{2}\Delta \times 4\delta$ tube $U$ coaxial with $T$ which maximizes the size of the intersection
    \[
    |\mc P_r(\T_i) \cap T_i \cap \frac{1}{2} U|.
    \]
    Here $\frac{1}{2} U$ refers to the $\frac{1}{4}\Delta \times 4\delta$ tube with the same center as $U$. 
    Now we inductively estimate the number of $r$-rich points for $\T_i$. For $i\ge 0$ we have by Cauchy--Schwarz
    \begin{equation}\label{eq:CS}
    I(\mc P_r(\T_i), \T_{i+1}) = \sum_{T \in \T} |\mc P_r(\T_i) \cap T_{i+1}| \le |\T|^{1/2} \left(\sum_{T\in \T} |\mc P_r(\T_i) \cap T_{i+1}|^2 \right)^{1/2}    
    \end{equation}
    
    For $T \in \T$ observe that by the choice of $U_{i+1}(T)$ we have 
    \[
    |\mc P_r(\T_i) \cap \frac{1}{2}U_{i+1}(T)| \ge \frac{1}{8}\Delta |\mc P_r(\T_i) \cap T_i| \ge \frac{1}{8}\Delta |\mc P_r(\T_i) \cap T_{i+1}|.
    \]
    Thus, we have
    \[
    |\mc P_r(\T_i) \cap T_{i+1}|^2 \le 8\Delta^{-1} |\mc P_r(\T_i) \cap T_{i+1}| |\mc P_r(\T_i) \cap \frac{1}{2}U_{i+1}(T)|
    \]
    and since $\frac{1}{2}U_{i+1}(T)$ and $T_{i+1}$ are $\Delta/4$-separated, we conclude:
    \[
    \sum_{T\in \T} |\mc P_r(\T_i) \cap T_{i+1}|^2 \le 8\Delta^{-1} \sum_{p, p' \in \mc P_r(\T_i):~ d(p,p') \ge \Delta/4} \#\left\{T \in \T:~ p \in T_{i+1}, ~ p' \in \frac{1}{2} U_{i+1}(T)\right\},
    \]
    there are at most $C\Delta^{-1}$ distinct $4\times 4\delta$ tubes passing through any pair of $\Delta/4$-separated points, so we get 
    \[
    \sum_{T\in \T} |\mc P_r(\T_i) \cap T_{i+1}|^2 \le C_0 \Delta^{-2}|\mc P_r(\T_i)|^2
    \]
    for some absolute constant $C_0$, thus,
    \[
    I(\mc P_r(\T_i), \T_{i+1}) \le C^{1/2}_0 \Delta^{-1} |\T|^{1/2} |\mc P_r(\T_i)|.
    \]
    So we get by the choice of $r = C_1 \Delta^{-2} |\T|^{1/2}$:
    \[
    |\mc P_r(\T_{i+1})| \le \frac{I(\mc P_r(\T_i), \T_{i+1})}{r} \le C_1^{-1} C_0^{1/2} \Delta |\mc P_r(\T_i)|.
    \]
    Let $C_1 = C_0^{1/2}$, then we obtain
    \[
    |\mc P_r(\T_i)| \le (\Delta/2)^{i} |\mc P_r(\T_0)| \le (\Delta/2)^{i} C\delta^{-2}
    \]
    so $\mc P_r(\T_m) = \emptyset$ for $m = C \log_{2/\Delta}(1/\delta)$.

    Now define $\mathbb U$ to be a collection of distinct $\Delta \times 8\delta$-tubes such that for every $T \in \T$ and $i =1, \ldots, m$ we have $U_i(T) \subset 0.9 U$ for some $U \in \mathbb U$. Define $\mathbb U(T) \subset \mathbb U$ to be a set of at most $m$ tubes covering $\{U_i(T)\}$. Observe that sets $2T \setminus \bigcup \mathbb U(T)$ have overlap at most $r$. Indeed, if there is a $r$-rich point for $\{T \setminus \bigcup \mathbb U(T)\}$ then the $\delta/10$-neighborhood of $p$ is $r$-rich for $\{4T \setminus \bigcup U_{i}(T)\}$. But this means that $\mc P_r(\T_m) \neq \emptyset$, contradicting the construction. This completes the proof.
\end{proof}

For reals $b \in [0,2\pi]$ and $a\in [0,b]$, we define a spherical $a\times b$-rectangle to be a region $R\subset S^2$ which is the $a$-neighbourhood of a great circle arc of length $b$. We will need an analogue of Lemma \ref{lem:two-ends} for spherical rectangles instead of tubes. For a spherical $a\times b$-rectangle $R$ we denote $c R$ the spherical $ca\times cb$ rectangle with the same axis and center. 

\begin{cor}\label{cor:two-ends}
    There is a constant $c>0$ such that the following holds for all $b \in (0,2\pi]$, $\Delta <c$ and $\d \le c\Delta b$. 
    For any collection of essentially distinct spherical $\delta\times b$-rectangles $\T$, there exists a collection of essentially distinct spherical $10\delta \times \Delta b$-rectangles $\mathbb U$ with the following property. For $T \in \T$ there is a set $\mathbb U(T) \subset \mathbb U$ of size at most $C \log_{2/\Delta} (1/\delta)$ such that the collection of sets
    \[
    1.5T \setminus \bigcup_{U \in \mathbb U(T)} U
    \]
    is at most $C \Delta^{-2} |\T|^{1/2}$ overlapping. Moreover, each $U \in \mathbb U(T)$ has the property that $U\cap T$ contains a spherical $\d \times (\Delta b-10\d)$-rectangle. 
\end{cor}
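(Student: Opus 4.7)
The plan is to reduce Corollary \ref{cor:two-ends} to the planar Lemma \ref{lem:two-ends} by pushing forward to $\R^2$ through local charts on $S^2$ in which great-circle arcs become straight line segments. Fix a small absolute constant $c_0 > 0$ and cover $S^2$ by a finite family of spherical caps $V_1, \ldots, V_N$ of angular radius $10 c_0$, chosen so that every cap of radius $c_0$ is contained in some $V_j$. On each cap, the gnomonic projection $\pi_j \colon V_j \to \R^2$ is bi-Lipschitz with absolute constant and sends great-circle arcs in $V_j$ to line segments; consequently it turns a spherical $\delta \times \ell$-rectangle whose axis lies in $V_j$ into a planar set comparable to a $\delta \times \ell$-tube.

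\smallskip

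In the core case $b \le c_0$, every $T \in \T$ is contained in some chart $V_{j(T)}$ (using $\delta \ll b$). I partition $\T$ by chart, push each subcollection $\T^{(j)}$ forward by $\pi_j$, and rescale by $1/b$ so that the images form an essentially distinct collection of planar tubes of dimensions $\sim 1 \times \delta/b$. Applying Lemma \ref{lem:two-ends} with thickness parameter $\delta/b$ and length parameter $\Delta$ yields, for each $T \in \T^{(j)}$, a set of $O(\log_{2/\Delta}(b/\delta))$ planar $8(\delta/b) \times \Delta$-tubes outside of which the original rescaled tubes are $O(\Delta^{-2} |\T^{(j)}|^{1/2})$-overlapping. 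Unscaling and pulling back through $\pi_j^{-1}$ converts these planar tubes into spherical $O(\delta) \times \Delta b$-rectangles; taking $\mathbb U = \bigcup_j \mathbb U_j$ produces the required family. For the range $c_0 < b \le 2\pi$ with $\Delta b < c_0$, I first subdivide the arc of each $T \in \T$ into $O(1)$ sub-arcs of length at most $c_0$, obtaining a new collection $\T'$ with $|\T'| = O(|\T|)$, and then apply the previous argument to $\T'$ with the length parameter in Lemma \ref{lem:two-ends} set to $\Delta'' = \Delta b / c_0 < 1$; the resulting overlap bound is $O({\Delta''}^{-2} |\T|^{1/2}) = O(c_0^2 (\Delta b)^{-2} |\T|^{1/2}) \le O(\Delta^{-2} |\T|^{1/2})$ since $b \ge c_0$. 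The remaining regime $\Delta b \ge c_0$ is trivial, as a bounded number of $10\delta \times \Delta b$-rectangles cover each $T$ outright.

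\smallskip

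The main obstacle is parameter bookkeeping: one must verify that the hypothesis $1 > \Delta'' > \delta''$ of Lemma \ref{lem:two-ends} is preserved under each rescaling (which follows from $\delta \le c \Delta b$ for sufficiently small $c$), that the bi-Lipschitz distortion of the $\pi_j$ only perturbs constants and accounts for the factor $10$ in the output width $10 \delta$, and that each $T \in \T$ produces at most $O(\log_{2/\Delta}(1/\delta))$ tubes in $\mathbb U(T)$ in total across all charts, which is automatic since $T$ meets only $O(1)$ charts and contributes $O(\log_{2/\Delta}(1/\delta))$ exclusions per chart. The \emph{moreover} clause is immediate from the construction in Lemma \ref{lem:two-ends}: each excluded planar sub-tube $U_i(T)$ is coaxial with $T$ of full width, so its spherical counterpart meets $T$ in a spherical $\delta \times (\Delta b - O(\delta))$-rectangle, as required.
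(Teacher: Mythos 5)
Your proposal follows essentially the same route as the paper's proof: cut long rectangles into constant-length pieces, localize to constant-size caps, transfer to the plane by the radial/gnomonic projection (which sends great-circle arcs to segments with only $(1\pm\varepsilon)$ distortion), rescale so Lemma \ref{lem:two-ends} applies, and pull the excluded tubes back to spherical $10\delta\times\Delta b$-rectangles, summing the overlap bounds over the $O(1)$ charts. The one bookkeeping point to watch (the paper's own reduction is equally terse here) is that when you subdivide an arc with $b>c_0$ you should cover the dilated axis of length $1.5b$ rather than just the length-$b$ arc, so that $1.5T\subset\bigcup_i 1.5T_i$ and the end-extensions of $1.5T$ are not left outside the region where the overlap is controlled.
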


Here by essentially distinct rectangles we mean that $|U\cap U'| \le 0.9 |U|$ for every $U\neq U'$.

\begin{proof}
    Let $r>0$ be a small constant. Note that a spherical $\d\times b$ rectangle can be covered by at most $16/r$ many spherical $\d\times \min(r/8, b)$-rectangles. So by replacing rectangles in $\T$ by the covering we may assume that $b \le r/8$ holds (at the cost of increaing $C$ by a factor of $O(1/r)$). 
    
    We can cover $S^2$ by $\sim r^{-2}$ many $r$-balls $B_i$ in such a way that any $r/2$-ball is covered by at least one $B_i$. So is covered $\T$ by families $\T_{B_i} = \{T\in \T:~ T\subset B_i\}$ and we have the property that if $T, T'\in \T$ satisfy $T\cap T'\neq \emptyset$ then there is some $i$ so that $T, T' \in \T_{B_i}$. Thus, it is enough to prove the statement for one $\T_{B_i}$ at a time. Now let $\Pi$ be the plane tangent to $S^2$ at the center point of $B_i$ and consider the radial projection map $\pi:B_i \to \Pi$ (with projection center at the origin). Observe that under this map, spherical $a\times b$-rectangles contained in $B_i$ essentially map to $a\times b$ rectangles in the plane $\Pi$. More precisely, for any $\varepsilon>0$ and $r< r(\varepsilon)$ the image $\pi(R)$ of an $a\times b$ spherical rectangle is contained in a $(1+\varepsilon)a\times (1+\varepsilon)b$ plane rectangle and contains a $(1-\varepsilon)a\times (1-\varepsilon)b$ plane rectangle. So using the projection $\pi$ we may apply Lemma \ref{lem:two-ends} to a family of $(1-\varepsilon)\d \times (1-\varepsilon)b$ rectangles obtained from $\pi(\T_{B_i})$. Taking $\varepsilon$ a small constant, say $\varepsilon=0.1$, we get the desired conclusion. 
\end{proof}


\subsection{Hairbrush estimate.} 
For a set of $\delta\times 1$ tubes in the plane $\T$ be say that $\T$ is $(t, C)$-Katz-Tao of tubes, if for every $w\le 1$ and every $w\times 1$ box $B$ we have
\[
|\T \cap B| \le C (w/\delta)^{t}.
\]
This is a convenient spacing condition for studying incidence geometry of tubes and it appears naturally in the application to our point-line incidence problem. 

We say that a set of $\delta\times 1$-tubes $\T$ in $\R^3$ satisfies the Katz-Tao Convex Wolff axiom with exponents $(t_1, t_2)$ and error $C$ if for any $u\le w\le 1$ and $u\times w\times 1$ box $B$ we have 
\[
|\T \cap B| \le C (u/\delta)^{t_1} (w/\d)^{t_2}.
\]
For shorthand, we say $\T$ is a $(t_1, t_2, C)$-Katz-Tao set of tubes. Katz--Tao and similar notions of concentration were introduced in the works of Katz and Tao \cite{katz2001some} and are a ubiquitous tool in fractal geometry and harmonic analysis. The notion of $(t_1, t_2, C)$-Katz--Tao sets of tubes is closely related to Wolff axioms \cite{wolff1995improved} which is often used to study Kakeya and Furstenberg sets \cite{wang2024restriction}. In particular, Convex Wolff Katz--Tao axioms used in the recent resolution of the Kakeya conjecture in $\R^3$ \cite{wang2025volume} correspond to $(1, 1, C)$-Katz--Tao sets.

In Proposition \ref{prop:plane-reduction} below we show that the set of $\delta$-tubes defined by a point-line configuration $X$ with $d(X) \ge \delta$ forms a $(1+\gamma, 2-\gamma, \delta^{-\varepsilon})$-Katz-Tao set of tubes where $\gamma \in [0,1]$ is a number such that $\operatorname{PL}_2(\gamma)$ holds. In this section we use Wolff's hairbrush argument originating from \cite{wolff1995improved} to prove an incidence estimate on tubes satisfying the $(t_1, t_2, C)$-Katz--Tao condition. Our argument only relies on the `classical' techniques and does not use any of the recent developments in the area. This makes the paper essentially self-contained but leaves open a direction for potential optimization of our methods.

Given a set of tubes $\T$ a shading $Y$ on $\T$ is a collection of subsets $Y(T) \subset T$ for every $T\in \T$. We say that $Y(T)$ is $\lambda$-dense if $|Y(T)| \ge \lambda |T|$. 
First we prove an estimate on the volume of the union of a shading $\bigcup_{T\in \T} Y(T)$ in terms of shading density provided that $\T$ is $t$-Katz--Tao. The estimate we prove follows from Wolff's two-ends argument and is far from optimal. The Furstenberg set estimate \cite{ren2023furstenberg} can be used to prove a sharp estimate. Since it is not necessary for our application we opted for a more elementary argument instead. 

\begin{lemma}\label{lem:planar-brush}
    Let $\T$ be a set of $\delta$-tubes in the plane which is a $(t, K)$-Katz-Tao for some $t\ge 1$ and $K>0$. Let $Y(T)\subset T$ be a $\lambda$-dense set for some $\lambda \in [\delta, 1]$. Then for every $\varepsilon >0$:
    \[
    \left|\bigcup_{T\in \T} Y(T)\right| \gtrsim_\varepsilon \delta^{\varepsilon} K^{-1}\lambda^2 \d^t |\T|
    \]
\end{lemma}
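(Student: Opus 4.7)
The plan is a clean second-moment / Cauchy--Schwarz argument combined with a dyadic angular decomposition controlled by the Katz--Tao hypothesis; no full two-ends reduction is needed. Set $Y = \bigcup_{T\in \T}Y(T)$ and introduce the multiplicity $\mu(x) = \sum_{T\in \T}\mathbf{1}_{Y(T)}(x)$. Then $\int_Y \mu\,dx = \sum_{T} |Y(T)| \ge \lambda\delta|\T|$, while Cauchy--Schwarz gives
\[
|Y|\cdot \int \mu^2\,dx \;\ge\; \Big(\int_Y \mu\,dx\Big)^2 \;\ge\; \lambda^2\delta^2|\T|^2.
\]
Expanding $\int\mu^2 = \sum_{T,T'\in\T}|Y(T)\cap Y(T')| \le \sum_{T,T'}|T\cap T'|$, the task is reduced to an upper bound on the pairwise intersection sum $\sum_{T,T'}|T\cap T'|$.

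To bound this sum I would fix a tube $T$ and partition the remaining $T'$ dyadically by the angle $\theta$ they form with $T$. For each dyadic $\theta \in [2\delta, 1]$, any tube $T'$ meeting $T$ at angle in $[\theta, 2\theta]$ is contained in the $C\theta\times 1$ rectangle around the axis of $T$, so the $(t,K)$-Katz--Tao hypothesis at width $C\theta$ produces at most $CK(\theta/\delta)^t$ such tubes, and each such intersection satisfies $|T\cap T'|\lesssim \delta^2/\theta$. The near-parallel regime $\theta\lesssim \delta$ contributes $O(K)$ tubes inside the $\delta$-neighbourhood of $T$, each with $|T\cap T'|\lesssim \delta$. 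Summing,
\[
\sum_{T'\in\T}|T\cap T'| \;\lesssim\; K\delta \;+\; K\delta^{2-t}\!\!\sum_{\theta \in [2\delta, 1] \text{ dyadic}}\!\! \theta^{t-1} \;\lesssim_\varepsilon\; K\,\delta^{2-t-\varepsilon},
\]
where the hypothesis $t\ge 1$ ensures the dyadic sum is $O(1)$ if $t>1$ and $O(\log(1/\delta))$ if $t=1$, both absorbed by $\delta^{-\varepsilon}$.

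Summing over $T$ and plugging back into the Cauchy--Schwarz bound yields
\[
|Y| \;\gtrsim_\varepsilon\; \frac{\lambda^2\delta^2|\T|^2}{K\,\delta^{2-t-\varepsilon}|\T|} \;=\; \delta^{\varepsilon}K^{-1}\lambda^2\delta^t|\T|,
\]
which is exactly the claim. The one step requiring some care is the angular containment: one must check that any $T'$ meeting $T$ at angle $\sim\theta$ really does fit in a single $C\theta\times 1$ box around the axis of $T$, so that Katz--Tao can be applied at scale $C\theta$. Everything else is routine geometric-series bookkeeping. The hypothesis $t\ge 1$ is essential: for $t<1$ the series $\sum_\theta \theta^{t-1}$ would be dominated by $\theta\sim\delta$ and this crude pairwise input would no longer suffice, which is precisely where the sharper Furstenberg set machinery of \cite{ren2023furstenberg} alluded to by the authors would be required.
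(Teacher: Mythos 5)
Your proposal is correct, and it takes a genuinely different (and simpler) route than the paper. You run C\'ordoba's $L^2$/second-moment argument: bound $\int \mu^2$ by the full pairwise overlap sum $\sum_{T,T'}|T\cap T'|$, decompose dyadically in the angle $\theta$ between tubes, and use the Katz--Tao axiom at width $\sim\theta$ together with $|T\cap T'|\lesssim \delta^2/\theta$; the hypothesis $t\ge 1$ makes the series $\delta^{2-t}\sum_\theta \theta^{t-1}$ dominated by the coarsest scale up to a $\log(1/\delta)$ factor, which is absorbed into $\delta^{-\varepsilon}$, and Cauchy--Schwarz then gives exactly the stated bound, with the $\lambda^2$ coming from $(\sum_T|Y(T)|)^2$. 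The paper instead performs a dyadic multiplicity pigeonholing and a two-ends reduction (choosing for each point a maximizing $1\times\theta$ tube), groups tubes into $C\theta\times C$ tubes $\T_\theta$, and counts only transversal incidences at angle $\gtrsim \alpha\theta$, applying the Katz--Tao axiom once at the pigeonholed scale $\theta$; this yields the intermediate bound $\gtrsim K^{-1}\lambda^2\theta^{1-t}\delta^t|\T|$, which is then weakened using $t\ge 1$ to the stated estimate. Both arguments prove the same lemma (and your version suffices as input to Corollary \ref{cor:plane-brush-cor}, which only uses the statement); what your approach buys is brevity, since the Katz--Tao condition with $t\ge 1$ already controls concentration near the axis of a fixed tube at every angular scale, making the crude all-pairs bound summable, while the paper's formulation retains the per-point transversality structure and the (discarded) $\theta^{1-t}$ gain for $t>1$, and rehearses the two-ends scheme that is genuinely needed in the three-dimensional hairbrush estimate of Lemma \ref{lem:space-brush}. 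The point you flag is indeed the only geometric check: a tube meeting $T$ at angle $\sim\theta$ lies in a $C\theta\times C$ box around the axis of $T$ (not literally a $C\theta\times 1$ box), but the Katz--Tao axiom for $w\times 1$ boxes extends to $Cw\times C$ boxes with an $O_t(1)$ loss, exactly as the paper itself uses for its tubes $T_\theta$; similarly the near-parallel regime contributes $O(C^tK)$ tubes. These $t$-dependent constants appear in the paper's proof as well and are immaterial for the applications, where $t\le 3$.
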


\begin{proof}
    By dyadic pigeonholing, we can find a dyadic number $\mu$ and a subset $Y'(T) \subset Y(T)$ for every $T\in\T$ so that each point in $U'=\bigcup_\T Y'(T)$ is contained in $\sim \mu$ shadings $Y'(T)$ and so that we have $\sum |Y'(T)| \gtrsim \frac{1}{\log (1/\delta)} \sum |Y(T)|$. By double counting, we have 
    \begin{equation}\label{eq:double-count-mu}
    \mu |U'| \sim \sum_{T\in \T} |Y'(T)|\gtrsim \frac{1}{\log (1/\delta)} |\T| \lambda \delta.    
    \end{equation}
    Next, we perform a two-ends reduction. Let $\varepsilon>0$ be arbitrarily small. We may assume that $\mu \ge C \delta^{-\varepsilon}$ since otherwise (\ref{eq:double-count-mu}) already gives a sufficiently good bound. For every $p \in U'$ let $\T_{Y'}(p)$ be the set of tubes $T\in \T$ so that $p\in Y'(T)$. For $p\in U'$, let $T_{p, \theta}$ be a $1\times \theta$ tube for some dyadic $\theta\in [\delta, 1]$ which maximizes the quantity
    \begin{equation}\label{eq:define-T-p}
    |\T_{Y'}(p) [T_{p}]| \theta^{-\varepsilon},    
    \end{equation}
    where $\T_{Y'}(p) [T_{p}]$ consists of tubes $T\in \T_{Y'}(p)$ so that $|T\cap T_{p}| \ge |T|/2$. Note that we have by definition
    \begin{equation}
    |\T_{Y'}(p) [T_{p}]| \gtrsim \theta^{\varepsilon} |\T_{Y'}(p)| \ge \delta^{\varepsilon} |\T_{Y'}(p)|.    
    \end{equation}
    Let $Y''(T) \subset Y'(T)$ be a new shading where we include $p$ in $Y''(T)$ iff $T \in\T_{Y'}(p) [T_{p}]$. Then we get
    \[
    \sum |Y''(T)| = \int_{U'} |\T_{Y'}(p)[T_p]| \gtrsim \delta^{\varepsilon} \int_{U'} |\T_{Y'}(p)| = \delta^{\varepsilon}  \sum |Y'(T)|. 
    \]
    Finally, we can pigeonhole the value of $\theta$ and define a shading $\tilde Y(T) = Y'' (T) \cap U'_\theta$ where $p\in U'_\theta$ if $T_p$ has dimensions $1\times \theta$. By choosing $\theta$ appropriately, we can guarantee that $\sum|\tilde Y(T)| \gtrsim \frac{1}{\log(1/\delta)}\sum |Y''(T)|$. After these reductions, it suffices to lower bound the volume of $\tilde U=\bigcup \tilde Y(T)$. 

    Note that for any $p\in \tilde U$, the set of tubes $\T_{\tilde Y}(p)$ is contained in some $C\theta\times C$ tube (for some constant $C$). 
    Using this, we can construct a collection of $C\theta\times C$ tubes $\T_\theta$ with the following two properties: for every $T, T' \in \T$ so that $\tilde Y(T)\cap \tilde Y(T') \neq\emptyset$ there exists $T_\theta\in \T_\theta$ so that $T, T' \subset T_\theta$, and for every $T\in \T$ there are only constantly many $T_\theta\in \T_\theta$ so that $T\subset T_\theta$. For $T_\theta \in \T_\theta$ let $\T[T_\theta]$ be the set of tubes contained in $T_\theta$ and $\tilde U_{T_\theta} = \bigcup_{\T[T_\theta]} \tilde Y(T)$. We get $|\tilde U| \gtrsim \sum_{\T_\theta} |\tilde U_{T_\theta}|$. 

    Consider the following set
    \[
    S=\{(p, T, T')\in \tilde U\times \T\times \T:~ \exists T_\theta\in \T_\theta:~ T, T' \in \T[T_\theta],~p\in \tilde Y(T)\cap \tilde Y(T'), ~\angle T, T' \ge \alpha\theta\}
    \]
    where $\alpha = 2^{-C\varepsilon^{-1}}$ is a constant depending on $\varepsilon$. Using the maximality of $T_p$ in (\ref{eq:define-T-p}), every $\alpha\theta\times 1$ tube $T'_p$ contains at most $\alpha^{\varepsilon} |\T_{Y'}(p)[T_p]|$ tubes $T \in \T_{Y'}(p)$. By definition, for $p \in \tilde U$ we have $\T_{Y'}(p)[T_p] = \T_{Y''}(p) = \T_{\tilde Y}(p)$,
    so for every $p \in \tilde U$, we get that there are at least 
    $|\T_{\tilde Y}(p)| (|\T_{\tilde Y}(p)|- C\alpha^\varepsilon |\T_{\tilde Y}(p)|) \ge |\T_{\tilde Y}(p)|^2/2$ pairs $(T, T') \in \T_{\tilde Y}(p)\times \T_{\tilde Y}(p)$ which form an angle at least $\alpha \theta$. We conclude that 
    \begin{align*}
    |S| := \int_{\tilde U} \#\{(T, T'):~ (p,  T, T')\in S\} \ge \frac12\int_{\tilde U} |\T_{\tilde Y}(p)|^2\\
    \ge \frac12 \frac{1}{|\tilde U|}\left(\int_{\tilde U} |\T_{\tilde Y}(p)|\right)^2 = \frac12 \frac{1}{|\tilde U|} \left(\sum_{T\in \T} |\tilde Y(T)|\right)^2\\
    \gtrsim \d^{3\varepsilon} \frac{1}{|\tilde U|} \lambda^2 \d^2|\T|^2.
    \end{align*}
    On the other hand, for any $T,T'$ forming an angle at least $\alpha\theta$, we have $|\tilde Y(T)\cap \tilde Y(T')| \le |T\cap T'| \lesssim \delta^2/\alpha\theta$. So we obtain
    \[
    |S| \lesssim \sum_{T_\theta\in \T_\theta} |\T[T_\theta]|^2 (\delta^2/\alpha\theta) \lesssim_\varepsilon (\delta^2/\theta) |\T| \max_{T_\theta\in \T_\theta} |\T[T_\theta]|.
    \]
    By the Katz--Tao axiom, we have  $|\T[T_\theta]| \lesssim  K (\theta/\delta)^{t}$. So by combining upper and lower bounds on $S$, we obtain
    \[
     \d^{3\varepsilon} \frac{1}{|\tilde U|} \lambda^2 \d^2|\T|^2 \lesssim (\delta^2/\theta) |\T| K (\theta/\delta)^{t}
    \]
    \[
    |\tilde U| \gtrsim \delta^{3\varepsilon}K^{-1} \lambda^2  \theta^{1-t}\d^t |\T| \gtrsim  \delta^{3\varepsilon}K^{-1} \lambda^2 \d^t |\T| 
    \]
    since $t\ge 1$, as desired.
\end{proof}

It will be convenient to use the following corollary.
\begin{cor}\label{cor:plane-brush-cor}
    Let $u \in (\d, 1)$ and let $\T$ be a set of $\delta$-tubes contained in a $Cu\times C$ rectangle $R$ and which is a $(t, K)$-Katz-Tao for some $t\ge 1$ and $K>0$. Let $Y(T)\subset T$ be a $\lambda$-dense set for some $\lambda \in [\delta, 1]$. Then for every $\varepsilon >0$:
    \[
    \left|\bigcup Y(T)\right| \gtrsim_\varepsilon \delta^{\varepsilon} K^{-1}\lambda^2 \d^tu^{1-t} |\T|
    \]
\end{cor}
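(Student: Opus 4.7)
The plan is to reduce directly to Lemma \ref{lem:planar-brush} by applying an affine rescaling that stretches $R$ into a rectangle with sides of comparable length. Since each $T \in \T$ is a $\delta \times 1$ tube contained in a $Cu \times C$ rectangle with $u \le 1$, the axis of $T$ must be nearly parallel to the long side of $R$, so this linear stretch will affect tubes in a uniform, predictable way.

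Let $\psi$ be the linear map that dilates the short direction of $R$ by the factor $1/u$ and fixes the long direction, so $\psi(R)$ has dimensions $C \times C$. Set $\d' := \d/u$ and $\T' := \psi(\T)$. Up to absolute constants, each $\psi(T)$ is then a $\d' \times 1$ tube in $\psi(R)$, and since $\psi$ has constant Jacobian $1/u$ the shading $Y'(T') := \psi(Y(T))$ is still $\lambda$-dense inside $T'$. The key step is to check that $\T'$ inherits the Katz--Tao condition at the new scale $\d'$: a $w \times 1$ box $B'$ in $\psi(R)$ with $w \in [\d',1]$ pulls back (up to absolute constants) to a $wu \times 1$ box $B \subset R$ with $wu \in [\d,1]$, and the original Katz--Tao hypothesis applied to $B$ yields
\[
|\T' \cap B'| = |\T \cap B| \lesssim K (wu/\d)^t = K (w/\d')^t,
\]
where the last identity uses $wu/\d = w/\d'$. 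Hence $\T'$ is $(t, O(K))$-Katz--Tao at scale $\d'$.

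Applying Lemma \ref{lem:planar-brush} to $(\T', Y')$ at scale $\d'$ and then undoing the rescaling (which multiplies areas by $u$) gives
\[
\Bigl|\bigcup Y(T)\Bigr| = u \Bigl|\bigcup Y'(T')\Bigr| \gtrsim_\varepsilon u (\d/u)^\varepsilon K^{-1} \lambda^2 (\d/u)^t |\T| = \d^\varepsilon u^{-\varepsilon} K^{-1} \lambda^2 \d^t u^{1-t} |\T|.
\]
Since $u \ge \d$, the factor $u^{-\varepsilon}$ is at most $\d^{-\varepsilon}$, so after relabeling $\varepsilon$ we obtain the claimed bound. The only step requiring real care is the Katz--Tao transfer: one has to confirm that a $w \times 1$ rectangle in the rescaled picture corresponds to something comparable to a $wu \times 1$ rectangle in $R$. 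This is where it matters that all the tubes, and hence the relevant ambient rectangles, are essentially aligned with the long axis of $R$, so that the anisotropic dilation $\psi$ does not distort them into degenerate parallelograms.
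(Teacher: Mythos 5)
Your proposal is correct and follows essentially the same route as the paper: apply the anisotropic rescaling $\psi$ of $R$ onto a unit-scale square, check that the rescaled family is a $(t,O(K))$-Katz--Tao family of $\delta/u$-tubes, invoke Lemma \ref{lem:planar-brush}, and undo the volume factor $u$. The extra care you devote to the Katz--Tao transfer (that the only boxes which can essentially contain rescaled tubes pull back to sets comparable to $uw\times 1$ boxes) is exactly the "easy to check" step the paper leaves implicit, so there is nothing to change.
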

\begin{proof}
    Let $\psi: R\to [0, C]^2$ be the natural rescaling map and consider the set of $\sim\delta/u$-tubes $\T'=\psi(\T)$. It is easy to check that $\T'$ is $(C'K, t)$-Katz-Tao, so Lemma \ref{lem:planar-brush} gives 
    \[
    |\bigcup \psi(Y(T))| \gtrsim_\varepsilon \d^\varepsilon K^{-1} \lambda^2 (\d/u)^t |\T|
    \]
    Since $\psi$ increases volume by a factor of $u^{-1}$, the result follows.
\end{proof}

Now we use Lemma \ref{lem:planar-brush} (and actually Corollary \ref{cor:plane-brush-cor}) to prove a volume estimate for shadings on a $(t_1, t_2, C)$-Katz--Tao set of tubes in $\R^3$. This estimate is a generalization of Wolff's hairbrush \cite{wolff1995improved} estimate which was developed by Wolff to prove that Kakeya sets in $\R^3$ have dimension at least $5/2$. Needless to say that this estimate is far from being optimal and sharp volume estimates for $(t_1, t_2, C)$-Katz--Tao sets are currently unavailable: even in case of Kakeya $t_1=t_2=1$ it is essentially equivalent to the maximal Kakeya conjecture. 

\begin{lemma}\label{lem:space-brush}
    Let $\T$ be a set of $\delta$-tubes in $\R^3$ which is $(t_1, t_2, K)$-Katz-Tao for some $t_1, t_2\ge 1$ and $K>0$. Let $Y$ be a $\lambda$-dense shading on $\T$, then we have 
    \begin{equation}\label{eq:hairbrush}
    \left|\bigcup Y(T)\right| \gtrsim_\varepsilon \d^{\varepsilon} K^{-\frac{2+t_1}{2t_1+2t_2}} \lambda^{5/2} \d^2 |\T|^{\frac{2+t_1}{2t_1+2t_2}}. 
    \end{equation}
\end{lemma}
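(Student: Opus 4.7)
The plan is to adapt Wolff's hairbrush argument to the $(t_1,t_2,K)$-Katz--Tao setting, feeding the planar estimate from Corollary \ref{cor:plane-brush-cor} into a 3D angular decomposition around a ``spine'' tube.

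\textbf{Step 1 (Uniformization).} First I would perform standard dyadic pigeonholing to replace each $Y(T)$ by a sub-shading $\tilde Y(T)\subseteq Y(T)$ for which the multiplicity $p\mapsto\#\{T:p\in \tilde Y(T)\}$ is essentially constant, equal to some dyadic value $\mu$, on $\tilde U:=\bigcup_T\tilde Y(T)$, losing only $(\log 1/\delta)^{O(1)}$. Double-counting gives $\mu\,|\tilde U|\sim\sum_T|\tilde Y(T)|\gtrsim \lambda\delta^2|\T|$, so the task reduces to proving $\mu\lesssim_\varepsilon \delta^{-\varepsilon}K^\alpha\lambda^{-3/2}|\T|^{1-\alpha}$ with $\alpha=\frac{2+t_1}{2t_1+2t_2}$.

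\textbf{Step 2 (Spine and angular pigeonhole).} Average over $\T$ to select a spine $T_0\in\T$ whose shading receives the average number $\sim \mu\lambda\delta^2$ of incidences from other tubes. Dyadically pigeonhole the angle $\theta_0\in[\delta,1]$ that other tubes make with $T_0$ to produce a sub-hairbrush $\mathcal H\subset\T$ of tubes piercing $\tilde Y(T_0)$ at angle $\sim\theta_0$. Since two $\delta$-tubes meeting at angle $\theta_0$ overlap in volume $\sim \delta^3/\theta_0$, this gives $|\mathcal H|\gtrsim_\varepsilon \delta^\varepsilon\mu\lambda\theta_0/\delta$.

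\textbf{Step 3 (Reduction to a planar Katz--Tao family).} The tubes in $\mathcal H$ lie in the $\theta_0$-cylindrical neighborhood of $T_0$. Project orthogonally onto a plane $\Pi\perp T_0$; each 3D tube becomes a planar $\delta$-tube of length $\sim\theta_0$, and its shading density is preserved to $\sim\lambda$ because each $\pi$-fiber inside the tube has length $\sim\delta/\theta_0$. After rescaling $\Pi$ by $\theta_0^{-1}$ we obtain unit-length planar tubes of width $\delta':=\delta/\theta_0$. The key observation is that a planar $w\times 1$ rectangle in the rescaled plane lifts to a 3D box of dimensions $\delta\times w\theta_0\times 1$; applying the 3D $(t_1,t_2,K)$-Katz--Tao condition to this box gives at most $K(w\theta_0/\delta)^{t_2}=K(w/\delta')^{t_2}$ tubes. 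Hence the planar family is $(t_2,K)$-Katz--Tao.

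\textbf{Step 4 (Apply planar brush + lift back).} Apply Lemma \ref{lem:planar-brush} with $t=t_2$ to the rescaled planar family to produce a lower bound $\gtrsim_\varepsilon \delta^\varepsilon K^{-1}\lambda^2(\delta')^{t_2}|\mathcal H|$ on the area of $\bigcup\pi(\tilde Y(T))$ in rescaled coordinates. Multiplying by $\theta_0^2$ returns to physical planar area, and lifting to 3D using the fiber-length relation $|\tilde Y(T)|\le|\pi(\tilde Y(T))|\cdot\delta/\theta_0$ together with the multiplicity constraint $\mu$ yields
\[
\mu\,|\tilde U|\ \gtrsim_\varepsilon\ \delta^\varepsilon K^{-1}\lambda^3\delta^{1+t_2}\theta_0^{1-t_2}\,|\mathcal H|.
\]

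\textbf{Step 5 (Optimization).} Substitute the lower bound on $|\mathcal H|$ from Step 2 and use the 3D Katz--Tao upper bound $|\mathcal H|\le K(\theta_0/\delta)^{t_1+t_2}$ coming from enclosing $\mathcal H$ in a $\theta_0\times\theta_0\times 1$ box around $T_0$; this upper bound pins down the smallest permissible $\theta_0$. Finally, combining the resulting bound on $\mu$ with the identity $\mu\,|\tilde U|\sim\lambda\delta^2|\T|$ and a short algebraic manipulation produces the claimed inequality with exponent $\alpha=(2+t_1)/(2t_1+2t_2)$; the $\lambda^{5/2}$ arises from one $\lambda^{1/2}$ gain at the spine-selection step multiplied by the $\lambda^2$ gain from the planar estimate.

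\textbf{Main obstacle.} The principal technical issue is the lift from the planar union-of-shadings estimate back to the 3D volume: the natural $\pi$ loses multiplicative fiber-length factors, and one must carefully track which side of the final inequality is multiplied by $\mu$ and which by $\lambda$. A secondary, more bookkeeping difficulty is that $t_1$ enters the final exponent only through the Katz--Tao saturation constraint on $|\mathcal H|$, whereas $t_2$ enters through the planar brush application; arriving at the symmetric-looking $\alpha=(2+t_1)/(2t_1+2t_2)$ therefore requires an optimization in $\theta_0$ that must be set up with some care. The $\delta^\varepsilon$ losses from the planar estimate and the two pigeonholes are absorbed cleanly.
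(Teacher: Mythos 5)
Your plan (spine, hairbrush, planar Katz--Tao input, optimize in the angle) has the same flavor as the paper's argument, but the implementation breaks at the crucial geometric step. The ``key observation'' of Step 3 is false: if a strip of physical width $w\theta_0$ in $\Pi$ contains the projections of several hairbrush tubes, those tubes do \emph{not} lie in a $\delta$-thin slab; a tube through $T_0$ whose projected direction deviates by an angle $\lesssim w$ from the strip direction sweeps out $\sim w\theta_0$ transversally, so the correct lift is a $(w\theta_0+\delta)\times\theta_0\times 1$ box. The Katz--Tao axiom then gives $\lesssim K(w\theta_0/\delta)^{t_1}(\theta_0/\delta)^{t_2}$ tubes, i.e.\ the projected family is at best $(t_1,\,K(\theta_0/\delta)^{t_2})$-Katz--Tao, not $(t_2,K)$-Katz--Tao. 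This is precisely why the paper does not project along the spine: it slices the hairbrush into $C\delta\times C\theta\times C$ slabs $H$ containing the spine (each hairbrush tube genuinely lies in one such $\delta$-thin slab, so only the $t_2$-factor survives inside $H$), applies Corollary \ref{cor:plane-brush-cor} slab by slab to the shading minus an $r\theta$-neighborhood of the spine, and integrates using the $\lesssim r^{-1}$-overlap of the slabs. Relatedly, the ``lift back'' in Step 4 is not a valid step: a lower bound on $|\pi(\bigcup\tilde Y(T))|$ gives no lower bound on the three-dimensional measure of $\bigcup\tilde Y(T)$, since the fibers of the shadings over the projection can be arbitrarily short; the inequality $|\tilde Y(T)|\le|\pi(\tilde Y(T))|\,\delta/\theta_0$ points the wrong way and the multiplicity $\mu$ does not control fiber lengths. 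In fact your displayed Step 4 bound is weaker than the trivial estimate $\mu|\tilde U|\ge\sum_{T\in\mathcal H}|\tilde Y(T)|\ge\lambda\delta^2|\mathcal H|$ (note $\delta^{1+t_2}\theta_0^{1-t_2}\le\delta^2$ for $t_2\ge1$), so as written it carries no gain from the planar lemma at all. You also skip the two-ends reduction, which the paper needs to prevent the shadings from concentrating near the spine; that omission is repairable, but the two points above are not cosmetic.

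There is a second, structural obstruction: even if Steps 2--4 were granted in their intended form, a single-spine optimization cannot reach the claimed exponent. Feeding $|\mathcal H|\gtrsim\mu\lambda\theta_0/\delta$, the saturation bound $|\mathcal H|\le K(\theta_0/\delta)^{t_1+t_2}$ and $\mu\sim\lambda\delta^2|\T|/|\tilde U|$ into a bound of the shape $|\tilde U|\gtrsim K^{-1}\lambda^{3}\delta^{1+t_2}\theta_0^{1-t_2}|\mathcal H|$ and eliminating $\theta_0,\mu$ yields $|\tilde U|\gtrsim\delta^2|\T|^{\frac{t_1+1}{2t_1+t_2}}$ up to powers of $K,\lambda$, and $\frac{t_1+1}{2t_1+t_2}<\frac{t_1+2}{2t_1+2t_2}$ whenever $t_2<2$ (e.g.\ $2/3$ versus $3/4$ at $t_1=t_2=1$), which is exactly the regime $t_2=2-\gamma$ used later in the paper. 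The stated exponent comes from running the Cauchy--Schwarz/hairbrush argument \emph{locally in every} $C\theta\times C$ tube $T_\theta$, with every tube acting as a spine, which gives $|\tilde U_{T_\theta}|^2\gtrsim\cdots|\T[T_\theta]|$, and then summing over $T_\theta$ using $|\T[T_\theta]|\le M=\min\{K(\theta/\delta)^{t_1+t_2},|\T|\}$; this summation is what produces the factor $|\T|M^{-1/2}$ and, after interpolation, the exponent $\frac{2+t_1}{2t_1+2t_2}$. A single hairbrush only ever sees $|\mathcal H|$ tubes, with the rest of the family entering solely through $\mu$, so your Step 5 cannot recover the claimed bound.
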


\begin{proof}
By repeating the dyadic pigeonholing and two-ends reduction from Lemma \ref{lem:planar-brush}, we can find a dyadic scale $\theta \in [\delta, 1]$ and a subset $\tilde Y(T) \subset Y(T)$ for every $T\in \T$ so that for every $p \in \tilde U = \bigcup_{T\in\T} \tilde Y(T)$ there is a $\theta$-tube $T_p$ so that $\T_{\tilde Y}(p) \subset \T_{Y}(p)[T_p]$ and $T_p$ maximizes (\ref{eq:define-T-p}). As previously, define a collection of $C\theta\times C$-tubes $\T_\theta$ covering $\T$ and define $\tilde U_{T_\theta} = \bigcup_{\T[T_\theta]} \tilde Y(T)$. After these refinements, we will have $|\tilde U| \gtrsim\sum_{\T_\theta} |\tilde U_{T_\theta}|$ and $\sum |\tilde Y(T)| \gtrsim \delta^{\varepsilon} (\log 1/\d)^{-O(1)} \sum |Y(T)|$.

For a $C\d\times C\theta\times C$ slab $H$ we can view the set of tubes $\T[H]$ contained in $H$ as essentially a planar configuration of tubes: we can slice $H$ by a random plane $P$ parallel to the two major axes of $H$ and study the intersections $\tilde Y(T)\cap P$, $T\in \T[H]$. 
Note that for any $C\delta \times Cw\times C$ subslab $B \subset H$ the $(t_1, t_2, K)$-Katz--Tao axiom implies $|\T[B]| \lesssim K (w/\d)^{t_2}$. So we get that the collection of tubes we obtain inside the plane $P$ is $(t_2, CK)$-Katz--Tao. So for any $H$ and subsets $Y'(T) \subset T$ we get the following estimate using Corollary \ref{cor:plane-brush-cor}:
\begin{equation}\label{eq:volume-TH}
|\bigcup_{T\in \T[H]} Y'(T)| \gtrsim \int_0^{C\delta} |\bigcup_{T\in \T[H]}  Y'(T) \cap P_x| dx \gtrsim_{\varepsilon} \delta^{\varepsilon} K^{-1} \delta^{1+t_2} \theta^{1-t_2} |\T[H]| \lambda_{H, Y'}^2    
\end{equation}
where $P_x$ denotes a plane distance $x$ away from a fixed face of $H$ and $\lambda_{H, Y'}$ is the average density of shadings $Y'(T)$ over $T\in \T[H]$. 

Now we fix $T_\theta\in \T_\theta$. Let $r = \delta^{C\varepsilon} \lambda$ be a parameter.
Similarly to the proof of Lemma \ref{lem:planar-brush} consider the following set:
\[
    S_{T_\theta}= \{(p, T, T')\in \tilde U_{T_\theta}\times \T[T_\theta]\times \T[T_\theta]:~ p\in \tilde Y(T)\cap \tilde Y(T'), ~\angle T, T' \ge \alpha\theta\}.
\]
Then we analogously to before, get for $\alpha = 2^{-C\varepsilon^{-1}}$:
\begin{equation}\label{eq:Stheta-lower-bound}
|S_{T_\theta}| \ge \frac{1}{2} \int_{\tilde U_{T_\theta}} |\T_{\tilde Y}(p)|^2 \gtrsim \frac{1}{|\tilde U_{T_\theta}|} \left(\sum_{T\in \T[T_\theta]} |\tilde Y(T)|\right)^2 \sim \frac{1}{|\tilde U_{T_\theta}|} \lambda_{T_\theta}^2 \d^4|\T[T_\theta]|^2,    
\end{equation}
where $\lambda_{T_\theta}$ is the average density of shadings $\tilde Y(T)$ for $T\in \T[T_\theta]$. 

For every $T\in \T[T_\theta]$ let $\T(T)$ be the set of $T'\in \T[T_\theta]$ such that $\tilde Y(T)\cap \tilde Y(T')\neq \emptyset$, $\angle T,T' \ge \alpha \theta$. 
By definition, we have
\begin{equation}\label{eq:Stheta-upper-bound}
|S_{T_\theta}| = \sum_{T\in \T[T_\theta]} \sum_{T'\in \T(T)} |\tilde Y(T)\cap \tilde Y(T')| \lesssim \sum_{T\in \T[T_\theta]} |\T(T)| (\delta^3 / \alpha\theta).    
\end{equation}
Note that for $T'\in \T(T)$ we have $|\tilde Y(T') \setminus \mc N_{r\theta}(T)| \ge |\tilde Y(T')| - C\delta^2  r/\alpha$, where $\mc N_{r\theta}(T_0)$ is the $r$-neighbourhood of $T$.

For a tube $T \in \T[T_\theta]$ let us consider a collection of $C\d\times C\theta\times C$ slabs $H_1, \ldots, H_m$, $m \sim \theta/\d$, which pass through $T$ and cover $T_\theta$. Consider the shading $Y'(T') = \tilde Y(T') \setminus \mc N_{r\theta}(T)$. Applying (\ref{eq:volume-TH}) to each $\T(T)[H_i]$ with shading $Y'$ gives:
\begin{align}\label{eq:tildeUTtheta-lower}
|\tilde U_{T_\theta}| \gtrsim r\sum_{i=1}^m |\bigcup_{\T(T)[H_i]} Y'(T')| \gtrsim \delta^{\varepsilon}r K^{-1}\d^{1+t_2}  \theta^{1-t_2} \sum_{i=1}^m |\T(T)[H_i]|\lambda^2_{H_i, Y'} \\  \gtrsim \delta^{\varepsilon}r K^{-1}\d^{1+t_2}  \theta^{1-t_2} |\T(T)|\lambda^2_{T,Y'} \nonumber
\end{align}
where we used the fact that sets $H_i \setminus \mc N_{r\theta}(T_0)$ are $\lesssim r^{-1}$ overlapping. Here $\lambda_{T, Y'}$ denotes the average density of the shading $Y'(T')$ for $T' \in \T(T)$ and we used Cauchy--Schwarz to convert the sum of squares of densities $\lambda_{H_i, Y'}^2$ to $\lambda_{T, Y'}^2$. To be more precise, we estimate it as follows: note that by definition
\[
|\T(T)| \lambda_{T, Y'} = \sum_{i=1}^m |\T(T)[H_i]| \lambda_{H_i, Y}
\]
and so 
\begin{align*}
|\T(T)|^2 \lambda^2_{T, Y'} = \left(\sum_{i=1}^m |\T(T)[H_i]| \lambda_{H_i, Y}\right)^2 = \left(\sum_{i=1}^m |\T(T)[H_i]|^{1/2} (|\T(T)[H_i]|^{1/2} \lambda_{H_i, Y})  \right)^2 \\
\le |\T(T)| \sum_{i=1}^m |\T(T)[H_i]|\lambda_{H_i, Y}.
\end{align*}
We deal with the density terms in the following computations in a similar manner. 
So by (\ref{eq:Stheta-upper-bound}), we get
\[
|\T[T_\theta]| |\tilde U_{T_\theta}| \gtrsim \delta^{\varepsilon}r K^{-1}\d^{1+t_2}  \theta^{1-t_2}\sum_{T\in \T[T_\theta]} |\T(T)|\lambda^2_{T,Y'} \gtrsim \delta^{\varepsilon}r K^{-1}\d^{1+t_2} \theta^{1-t_2} |S_{T_\theta}| (\alpha\theta/\d^3) \lambda_{T_\theta, Y'}^2
\]
and so (\ref{eq:Stheta-lower-bound}) gives
\begin{align*}
     |\T[T_\theta]| |\tilde U_{T_\theta}| \gtrsim \delta^{\varepsilon}r K^{-1}\d^{1+t_2}  \theta^{1-t_2} (\alpha\theta/\d^3) \lambda_{T_\theta}^2 \lambda_{T_\theta, Y'}^2 \frac{1}{|\tilde U_{T_\theta}|} \d^4 |\T[T_\theta]|^2
\end{align*}
\begin{equation}\label{eq:Utheta-main-estimate}
|\tilde U_{T_\theta}|^2 \gtrsim \delta^{\varepsilon}r K^{-1}\d^{2+t_2}\theta^{2-t_2}|\T[T_\theta]|\lambda_{T_\theta}^2 \lambda_{T_\theta, Y'}^2.    
\end{equation}



By the $(t_1, t_2,K)$-Katz--Tao axiom we have $|\T[T_\theta]| \lesssim M:=\min\{K (\theta/\d)^{t_1+t_2}, |\T|\}$, which we use to lower bound
\[
|\T[T_\theta]|^{1/2} = |\T[T_\theta]| /|\T[T_\theta]|^{1/2} \gtrsim |\T[T_\theta]| M^{-1/2}.
\]
Now we note that since $\lambda_{T_\theta} \ge  \lambda_{T_\theta, Y'} \ge \lambda_{T_\theta} - C r$ we can bound
\[
\sum_{T_\theta\in \T_\theta} |\T[T_\theta]| \lambda_{T_\theta} \lambda_{T_\theta, Y'} \ge \sum_{T_\theta\in \T_\theta} |\T[T_\theta]| \lambda_{T_\theta, Y'}^{2} \gtrsim |\T| \lambda^2_{Y'}
\]
where $\lambda_{Y'}$ is the average density of $Y'(T)$ over all $T\in \T$. Finally, we recall that $\sum_{\T} |\tilde Y(T)| \gtrsim \d^{2\varepsilon} \lambda \d^2 |\T|$ and so if we take $r \ll \d^{2\varepsilon} \lambda$ then we also get $\sum_{\T} |Y'(T)|\gtrsim \d^{2\varepsilon} \lambda \d^2 |\T|$. We conclude that
\[
|\tilde U| \gtrsim \sum_{T_\theta\in \T_\theta} |\tilde U_{T_\theta}| \gtrsim \delta^{\varepsilon}r^{1/2} K^{-1/2}\d^{(2+t_2)/2}\theta^{(2-t_2)/2}|\T| M^{-1/2}\lambda^2. 
\]
Now we observe
\begin{align*}
\theta^{(2-t_2)/2}|\T| &\max\{|\T|^{-1/2}, K^{-1/2} (\d/\theta)^{(t_1+t_2)/2}  \} \\&\ge \theta^{(2-t_2)/2}|\T| \left(|\T|^{-1/2}\right)^{1- \frac{2-t_2}{t_1+t_2}} \left(K^{-1/2}(\d/\theta)^{(t_1+t_2)/2}\right)^{\frac{2-t_2}{t_1+t_2}}\\
&= K^{-\frac{2-t_2}{2t_1+2t_2}}  \d^{\frac{2-t_2}{2}} |\T|^{\frac{2+t_1}{2t_1+2t_2}}
\end{align*}
and so putting everything together gives
\[
|\tilde U| \gtrsim \d^{C\varepsilon} K^{-\frac{2+t_1}{2t_1+2t_2}} \lambda^{5/2} \d^2 |\T|^{\frac{2+t_1}{2t_1+2t_2}}.
\]
\end{proof}

\section{High-low estimates in $\R^3$}\label{sec:high-low}

\subsection{Basic high-low inequality in $\R^3$}

Fix $\chi: \R^3\rightarrow [0,1]$ a compactly supported radial bump function such that $\chi(x) \ge 1/2$ for $|x|\le 1/2$ and $\chi(x) = 0$ for $|x|\ge 2$. For $w>0$ denote $\chi_w(x) = w^{-3} \chi(x/w)$ and define
$\eta_w = \chi_{w} * \chi_{w/2}$.

If $P$ and $L$ are sets of points and lines in $\R^3$ then we define the smooth incidence count as
\[
I(w; P,L) = w^2 \langle 1_P, \sum_{\ell \in L} 1_{\ell} * \eta_w\rangle
\]
where $1_P = \sum_{p\in P} 1_p$ is the sum of delta functions at points of $P$ and $1_\ell$ is the delta function on $\ell$. Explicitly, we have
\[
\langle 1_p, 1_\ell * \eta_w\rangle = \int_\ell \eta_w(x - p)dx.
\]
Define the normalized incidence function as
\[
B(w; P, L) = \frac{I(w, P, L)}{w^2 |P| |L|} = \langle g , f * \eta_w\rangle
\]
where $g = |P|^{-1} \sum_{p\in P} 1_p$ and $f = |L|^{-1}\sum_{\ell \in L} 1_{\ell}$ are the normalized indicator functions of the set of points and the set of lines.
In what follows, we will for convenience denote $B(w) = B(w; P, L)$ in cases when the sets $P, L$ are clear from the context.

We let $\mc A_{3,1}$ denote the set of lines in $\R^3$ which intersect the ball $B(0,1)$ and we fix a metric on $\mc A_{3,1}$. For a set of lines $L \subset \mc A_{3,1}$ and $u \le w \le 1$ we define $\M_{L}(u\times w\times 1)$ as the maximum over all $u\times w\times 1$ boxes $\Pi$ of $|L \cap \Pi|$ where $L\cap \Pi = \{\ell\in L:~ |\ell \cap \Pi| \ge 1/2\}$. 

\begin{prop}\label{prop:easy-relations-M}
    For any $L$ and $u\le u'\le 1$ and $w \le w' \le 1$ we have 
    \[
    \M_{L}(u'\times w'\times 1) \lesssim (u'/u)^2 (w'/w)^2 \M_L(u\times w\times 1).
    \]
\end{prop}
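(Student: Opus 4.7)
The plan is to reduce the count on a $u'\times w'\times 1$ box to counts on many $u\times w\times 1$ boxes via a parametrization of the lines combined with a partition of the resulting parameter space. Fix a $u'\times w'\times 1$ box $\Pi$ achieving the maximum $\M_L(u'\times w'\times 1)$. After an affine change of coordinates I may assume $\Pi=[-u'/2,u'/2]\times[-w'/2,w'/2]\times[-1/2,1/2]$ with long axis along $e_3$, so any line $\ell$ with $|\ell\cap\Pi|\ge 1/2$ is non-horizontal and can be written uniquely as $\ell_{a,b,x,y}=\{(x+ta,\,y+tb,\,t):t\in\R\}$ for some $(a,b,x,y)\in\R^4$. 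A direct geometric check shows that the condition $|\ell\cap\Pi|\ge 1/2$ forces $|a|,|x|\lesssim u'$ and $|b|,|y|\lesssim w'$, so all relevant lines come from a parameter region of volume $\lesssim (u')^2(w')^2$.

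Next I partition this 4-dimensional parameter region into cells of sidelengths $(u/2)\times(w/2)\times(u/2)\times(w/2)$, producing at most $\lesssim (u'/u)^2(w'/w)^2$ cells. For each cell with center $(a_0,b_0,x_0,y_0)$, I consider the $u\times w\times 1$ box $\Pi_0$ whose long axis has direction proportional to $(a_0,b_0,1)$ and which passes through the point $(x_0,y_0,0)$ with cross-sectional dimensions $u\times w$ in the obvious coordinate frame. For a line $\ell_{a,b,x,y}$ with parameters in this cell, the displacement from the central axis of $\Pi_0$ at time $t\in[-1/2,1/2]$ is bounded by $|x-x_0|+|a-a_0||t|\le u/4+(u/4)(1/2)<u/2$ in the first cross-sectional coordinate and by an analogous estimate less than $w/2$ in the second. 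Therefore $\ell_{a,b,x,y}\subset\Pi_0$ on this time interval, giving $|\ell\cap\Pi_0|\ge 1\ge 1/2$.

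Summing over cells yields $\M_L(u'\times w'\times 1)\lesssim (u'/u)^2(w'/w)^2\,\M_L(u\times w\times 1)$, which is the desired estimate. The only subtlety to watch is the threshold $|\ell\cap\Pi_0|\ge 1/2$ appearing in the definition of $L\cap\Pi_0$: a naive choice of cell size $u\times w$ would give a displacement of $3u/4$, placing $\ell$ just outside the target $u\times w\times 1$ box. Halving the cell size to $(u/2)\times(w/2)$ comfortably fixes this at the cost of an absorbable constant factor in the $\lesssim$ notation. This is the main technical point, and after it is handled the rest of the argument is routine bookkeeping of parameter volumes.
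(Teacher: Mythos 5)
Your argument is correct and is essentially the paper's proof in different coordinates: the paper parametrizes each line by its two intersection points with the planes through the $u'\times w'$ faces of the extremal prism and pigeonholes over $\lesssim (u'/u)^2(w'/w)^2$ pairs of small rectangles, while you parametrize by slope--intercept coordinates and partition the same four-dimensional parameter region into $\lesssim (u'/u)^2(w'/w)^2$ cells, each cell producing one $u\times w\times 1$ box. The one caveat in your write-up --- the claims that $|\ell\cap\Pi|\ge 1/2$ forces the line to be quantitatively transverse (non-horizontal, $|a|,|x|\lesssim u'$, $|b|,|y|\lesssim w'$) and that horizontal displacement can be identified with cross-sectional displacement really require $u',w'$ to be bounded by a small constant --- is exactly the caveat implicit in the paper's own step ``$\ell_i\cap P_j\in R'_j$'', so it is not a gap relative to the paper's argument.
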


\begin{proof}
    Given a collection of lines $L$, let $\Pi'$ be a $u'\times w'\times 1$ prism such that $|\Pi' \cap L| = \M_{L}(u'\times w'\times 1)$. Let $\Pi'\cap L = \{\ell_1, \ldots, \ell_m\}$, we have $|\Pi'\cap \ell_i| \ge 1/2$ for all $i$. Consider the two planes $P_1, P_2$ which contain the $u'\times w'$ faces of the prism $\Pi'$ and let $R'_1\subset P_1, R'_2\subset P_2$ be $4u' \times 4w'$ rectangles which are dilates of the corresponding faces of $\Pi'$. Then we have $\ell_i \cap P_j \in R'_j$ for $j=1,2$ and all $i$.  Now we cover each $R'_j$ by $\lesssim (u'/u)(w'/w)$ many disjoint $0.1u\times 0.1 w$ rectangles $R_{j,t}$. By pigeonhole principle, there is a pair of rectangles $R_{1, t_1}$, $R_{2,t_2}$ such that the number of indices $i$ so that $\ell_i \cap P_j \in R_{j, t_j}$ is lower bounded by $c (u/u')^2(w/w')^2 m$. One can check that the set of such lines is covered by some $u\times w\times 1$ prism $\Pi$. We conclude that $\M_L(u\times w\times 1) \gtrsim (u/u')^2(w/w')^2 m$, as desired.
\end{proof}

The next lemma is the simplest version of the high-low inequality in $\R^3$. This is essentially due to Guth--Solomon--Wang \cite{guth2019incidence} who used it to prove incidence upper bounds for well-spaced tubes in $\R^3$. Later on, we will prove two inequalities which improve on this bound under certain restrictions on the set of lines $L$. Both of these improvements will be crucial ingredients in the proof of Theorem \ref{thm:main}.

\begin{lemma}\label{lem:easy-high-low}
For any $\varepsilon>0$, $\delta >0$ and any $P \subset [-1,1]^3$, $L \subset \mc A_{3,1}$ we have 
\begin{equation}\label{eq:high-low-easy}
|B(\delta) - B(2\delta)|^2 \lesssim_\varepsilon \delta^{-6-\varepsilon} \frac{\M_P(\delta)}{|P|} \frac{\M_L(\delta\times \delta\times 1)}{|L|}.    
\end{equation}
\end{lemma}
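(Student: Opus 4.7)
The plan is to adapt the Fourier-analytic high-low scheme of Guth--Solomon--Wang \cite{guth2019incidence}. Setting $\tilde\chi := \hat\eta_\delta - \hat\eta_{2\delta}$, Plancherel expresses the difference as
\[
B(\delta) - B(2\delta) = \int \hat g(\xi)\,\overline{\hat f(\xi)}\, \tilde\chi(\xi)\,d\xi,
\]
where $\tilde\chi$ is uniformly bounded and essentially supported on the annulus $|\xi|\sim\delta^{-1}$ with rapidly decaying tails. By Cauchy--Schwarz, splitting the weight as $|\tilde\chi| = |\tilde\chi|^{1/2}\cdot|\tilde\chi|^{1/2}$, the problem reduces to bounding two $L^2$-integrals on this annulus:
\[
|B(\delta) - B(2\delta)|^2 \le \Big(\int |\hat g|^2 |\tilde\chi|\Big)\Big(\int |\hat f|^2 |\tilde\chi|\Big).
\]

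For the point factor, Plancherel turns the integral into the physical-side double sum
\[
\int|\hat g|^2|\tilde\chi|\,d\xi = |P|^{-2}\sum_{p,p'}K(p-p'),
\]
with $K := \mc F^{-1}(|\tilde\chi|)$ essentially concentrated on the scale-$\delta$ ball at the origin and $\|K\|_\infty \lesssim \delta^{-3}$ (with Schwartz tails). Since the number of pairs in $P\times P$ at distance $\lesssim\delta$ is at most $|P|\cdot\M_P(\delta)$, this gives $\int|\hat g|^2|\tilde\chi| \lesssim_\varepsilon \delta^{-3-\varepsilon}\M_P(\delta)/|P|$.

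For the line factor the analogous expansion reads
\[
\int|\hat f|^2|\tilde\chi|\,d\xi = |L|^{-2}\sum_{\ell,\ell'}\int \widehat{1_\ell}(\xi)\,\overline{\widehat{1_{\ell'}}(\xi)}\,|\tilde\chi(\xi)|\,d\xi.
\]
The distribution $\widehat{1_\ell}$ is carried by the 2-plane $\theta_\ell^\perp \subset \R^3$, and the pairwise product is supported on the 1D line $\theta_\ell^\perp\cap\theta_{\ell'}^\perp$ with Jacobian factor $(\sin\angle(\ell,\ell'))^{-1}$; evaluating the resulting 1D oscillatory integral along this line (on which $|\tilde\chi|$ localizes to two bumps near $|\xi|\sim\delta^{-1}$) yields a per-pair contribution of order $\delta^{-1}/\sin\angle(\ell,\ell')$ when $d(\ell,\ell')\lesssim\delta$ (saturating at $\delta^{-2}$ for near-parallel pairs), and negligible contribution otherwise. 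The next step is to organize the double sum by dyadically decomposing the angle between pairs and, at each scale, bounding the number of $\ell'\in L$ within distance $\delta$ of a fixed $\ell$ using Proposition~\ref{prop:easy-relations-M} to relate the relevant prism concentration numbers back to $\M_L(\delta\times\delta\times 1)$. After this bookkeeping, with logarithmic losses absorbed into $\delta^{-\varepsilon}$, one obtains $\int|\hat f|^2|\tilde\chi| \lesssim_\varepsilon \delta^{-3-\varepsilon}\M_L(\delta\times\delta\times 1)/|L|$, and multiplying the two factor bounds gives (\ref{eq:high-low-easy}).

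The main obstacle is precisely this line-factor estimate. In $\R^2$ the perpendicular ``plane'' to a line is a pair of points on $S^1$, so grouping lines by direction gives Fourier supports that are genuinely disjoint, and the corresponding $L^2$-estimate is pure orthogonality. In $\R^3$, by contrast, each direction's Fourier support on the annulus traces a great-circle-like tube on $S^2$, and two such tubes necessarily intersect transversally. The crude dyadic pair-counting above is enough for the basic bound (\ref{eq:high-low-easy}) with a $\delta^{-\varepsilon}$ loss, but it extracts no polynomial saving from this overlap pattern; the refined high-low inequalities proved later in this section will improve on it by quantitatively controlling the overlap of these great-circle tubes via the two-ends and hairbrush arguments of Section~\ref{sec:incidence}.
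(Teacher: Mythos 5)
Your overall framework (Plancherel, a Cauchy--Schwarz splitting of the frequency weight, and an $L^1$--$L^\infty$/pair-counting bound for the point factor) is sound and is only cosmetically different from the paper, which writes $\eta_\delta-\eta_{2\delta}=\chi_\delta*(\chi_{\delta/2}-\chi_{2\delta})$ and puts the whole high-pass factor on the line side; two technical points you should still flag are that you must truncate the lines (work with $\chi_C\cdot f$, as the paper does) before multiplying the singular distributions $\widehat{1_\ell}$, and that $|\tilde\chi|$ is not smooth, so before any oscillation argument you should dominate it by a smooth weight obeying $(\delta|\xi|)^2(1+\delta|\xi|)^{-d}$, which is legitimate because $|\hat f|^2$ and $|\hat g|^2$ are nonnegative. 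The genuine gap is in the line factor: once you expand $\int|\hat f|^2|\tilde\chi|$ into pairs and replace each pair term by its absolute value $\sim \delta^{-1}/(\sin\angle(\ell,\ell')+\delta)$ for $d(\ell,\ell')\lesssim\delta$, no bookkeeping in terms of $\M_L(\delta\times\delta\times 1)$ can reach the claimed bound, because this step discards cancellation \emph{between} pairs, and that cancellation is worth a full power of $\delta$. Quantitatively, for a fixed $\ell$ the lines at angle $\sim\alpha$ to $\ell$ passing within $\delta$ of $\ell$ fill out about $\alpha^{3}\delta^{-3}$ essentially distinct $\delta$-tubes ($\sim\alpha^{2}\delta^{-2}$ directions times $\sim\alpha\delta^{-1}$ positions), so a family with $\M_L(\delta\times\delta\times 1)\sim 1$ can have that many such neighbours; weighting by $\delta^{-1}/\alpha$ and summing over dyadic $\alpha$ gives a row sum $\sim\delta^{-4}$, hence a normalized total $\sim\delta^{-4}\M_L(\delta\times\delta\times 1)/|L|$, one factor $\delta^{-1}$ above the target. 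A concrete witness is $L$ a maximal $\delta$-separated family of lines meeting the unit ball: $|L|\sim\delta^{-4}$, $\M_L(\delta\times\delta\times 1)\sim 1$, the target line factor is $\sim\delta^{1-\varepsilon}$, yet your absolute pair sum is $\sim 1$ (each line has $\sim\delta^{-3}$ transverse neighbours within distance $\delta$, each contributing $\sim\delta^{-1}$ in modulus). Your sanity checks (planar grid, bush) pass only because there the close pairs are forced into few tubes; the spread-out case, which is the generic one, is exactly where the triangle inequality over pairs loses. The route through Proposition~\ref{prop:easy-relations-M} makes the loss explicit: covering the angle-$\alpha$ neighbours of $\ell$ by rotated $\delta\times\alpha\times 1$ prisms costs $(\alpha/\delta)\M_L(\delta\times\alpha\times 1)\lesssim(\alpha/\delta)^{3}\M_L(\delta\times\delta\times 1)$, not $(\alpha/\delta)\M_L(\delta\times\delta\times1)$.

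The paper never expands into pairs of lines. At each dyadic frequency scale $w$ it groups $L$ into $w$-caps of directions, notes that at a fixed frequency $\xi$ only the $\lesssim Kw^{-1}$ caps within $Kw$ of the great circle $\theta(\xi)^{\perp}$ contribute, applies Cauchy--Schwarz over those caps (paying a multiplicity $Kw^{-1}$ rather than a pair count), and then bounds each cap's contribution by $L^1$--$L^\infty$ in physical space via $\M_L(w\times w\times 1)$, using that all lines of a fixed cap passing within $w$ of a point lie in a single $w$-tube. This orthogonality-with-bounded-multiplicity step is precisely the across-pair cancellation your expansion throws away, and it is what produces $\delta^{-3}$ rather than $\delta^{-4}$. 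To repair your argument you would need to reorganize the sum by direction caps (or otherwise retain the phases of the pair terms) before taking any absolute values --- at which point you essentially reproduce the paper's proof.
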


\begin{proof}
    Let $g$ and $f$ be the normalized indicator functions of $P$ and $L$ as defined above. 
    By definition, we have 
    \begin{align*}
    B(\d) - B(2\d) = \langle g* \eta_\d, f\rangle - \langle g* \eta_{2\d}, f\rangle =  \langle g * \chi_\d, f * (\chi_{\d/2} - \chi_{2\d}) \rangle 
    \end{align*}
    Since $P \subset [-1,1]^3$ we may restrict the domain of integration to $[-2,2]^3$. 
    By the Cauchy-Schwarz inequality we then get
    \[
    |B(\d) - B(2\d)| \le \|g * \chi_\d\|_{L^2([-2,2]^3)} \|f * (\chi_{\d/2} - \chi_{2\d})\|_{L^2([-2,2]^3)}. 
    \]
    We can estimate
    \[
    \|g * \chi_\d\|^2_{L^2([-2,2]^3)} \le \|g * \chi_\d\|_{1} \|g * \chi_\d\|_{\infty} = \|g*\chi_\d\|_\infty\lesssim  \frac{\d^{-3}\M_P(\d)}{|P|}
    \]
    where we recall $\int_{\R^3} \chi_\d = 1$ and $g *\chi_\d(x)= |P|^{-1}\sum_{p\in P}\chi_\d(x-p)$.

    Letting $\psi = \chi_{\d/2} - \chi_{2\d}$, we now focus on $\|f * \psi\|^2_{L^2([-2,2]^3)}$. For a large enough constant $C>0$ we have
    \[
    \|f * \psi\|^2_{L^2([-2,2]^3)} \lesssim \| [\chi_C \cdot f] *\psi \|_2^2.
    \]
    Since $\chi$ is a fixed smooth function, we have the Fourier decay bound
    \[
    |\hat \psi(\xi)| \lesssim_d (1 + \d|\xi|)^{-d}
    \]
    for any $d\ge 1$. On the other hand, note that $\hat \chi_{w}(0) = 1$ for all $w$ and $\hat\chi$ is a smooth radially symmetric function. So the Taylor series of $\hat\psi = \hat\chi_{\d/2} - \hat\chi_{2\d}$ starts at degree 2, and so we have
    \[
    |\hat \psi(\xi)| \lesssim (\d |\xi|)^2.
    \]
    We conclude that
    \begin{equation}\label{eq:psi}
    |\hat \psi(\xi)| \lesssim_d (\d|\xi|)^2 (1 + \d|\xi|)^{-d} 
    \end{equation}
    for any fixed $d\ge 1$. 

    Thus, by Plancherel's theorem, we need to estimate
    \[
    W(w, L) = \int_{|\xi| \sim w^{-1}} |\widehat{F}(\xi)|^2  d\xi
    , \quad\text{where}\quad F=\chi_C\cdot f, \]
    for all $w$. Note that $F$ really is a distribution, not a function, but the quantity $W(w, L)$ still has a well-defined meaning. Consider a $w$-net $\Theta \subset S^2$ and decompose $L = \bigsqcup_{\theta \in \Theta} L_{\theta}$ where $L_\theta$ consists of lines $\ell$ with direction $\theta(\ell)$ (arbitrarily choosing between $\pm\theta(\ell)$) satisfying $d(\theta(\ell), \theta) \le w$. Let $f_\theta = |L|^{-1} \sum_{\ell \in L_\theta} 1_{\ell}$ and $F_\theta = \chi_C\cdot f_\theta$.
    
    For each $\theta\in \Theta$ we can upper bound
    \begin{align}
    \int_{|\xi| \sim w^{-1}} |\widehat{F_\theta}(\xi)|^2 d\xi&\lesssim \int_{|\xi| \sim w^{-1}} |\widehat{F_\theta}(\xi)|^2 |\widehat\chi_{w/C}|^2  d\xi \le \int_{\R^3} |\widehat{F_\theta}(\xi)|^2 | \widehat\chi_{w/C}|^2 d\xi = \nonumber \\
    =\| F_\theta * \chi_{w/C} \|^2_{2} 
    &\le \| F_\theta * \chi_{w/C} \|_{1} \| F_\theta * \chi_{w/C} \|_{\infty} \lesssim w^{-2} |L_\theta| \M_L(w\times w\times 1) |L|^{-2}.\label{eq:bound-Ftheta}
    \end{align}
    
    For $|\xi|\sim w^{-1}$, let $\theta(\xi)=\frac{\xi}{|\xi|}$. By the rapid decay of $\widehat \chi_C$, the function $\widehat{\chi_C \cdot 1_\ell}$ is essentially supported on the $1$-neigborhood of the plane $\theta^{\perp} \subset \R^3$. This implies that we have
    \[
    \widehat{F}(\xi) = \sum_{\theta \in \Theta:~ d(\theta, \theta(\xi)^{\perp}) \le K w} \widehat{F_\theta}(\xi) + O_d(w^{-2} K^{-d})
    \]
    for any fixed $d \ge 1$. Since there are at most $C K w^{-1}$ many $\theta \in \Theta$ in a strip of width $K w$ on $S^2$, for any fixed $\xi$ we have by Cauchy--Schwarz:
    \[
    |\sum_{\theta \in \Theta:~ d(\theta, \theta(\xi)^{\perp}) \le K w} \widehat{F_\theta}(\xi)|^2 \le C K w^{-1} \sum_{\theta \in \Theta:~ d(\theta, \theta(\xi)^{\perp}) \le K w} |\widehat{F_\theta}(\xi)|^2
    \]
    and so
    \begin{align*}
    |\widehat{F}(\xi)|^2 \lesssim Kw^{-1} \sum_{\theta \in \Theta:~ d(\theta, \theta(\xi)^{\perp}) \le Kw} |\widehat{F_\theta}(\xi)|^2 + O_d(w^{-4} K^{-d}) \le Kw^{-1} \sum_{\theta \in \Theta} |\widehat{F_\theta}(\xi)|^2 + O_d(w^{-4} K^{-d}).     
    \end{align*}
    So by integrating this over all $\xi$ and applying (\ref{eq:bound-Ftheta}) this gives
    \begin{align*}
    \int_{|\xi| \sim w^{-1}} |\widehat{F}(\xi)|^2 &\lesssim Kw^{-1} \sum_{\theta} w^{-2}|L_\theta| \M_L(w\times w\times 1) |L|^{-2} + O_d(w^{-7} K^{-d}) \\
    &\lesssim K w^{-3} \frac{\M_L(w\times w\times 1)}{|L|}  + O_d(w^{-7} K^{-d}). 
    \end{align*}

    Thus, we obtain the following estimate for any $d\ge 1$ and any $K \ge 1$:
    \[
    W(w, L) \lesssim Kw^{-3} \frac{\M_L(w\times w\times 1)}{|L|}  + O_d(w^{-7} K^{-d}).
    \]
    By Proposition \ref{prop:easy-relations-M} for dyadic values of $w\in(K^{-1}\d,C)$ we have $\M_L(w\times w\times 1)\lesssim (1+(w/\d)^{4})\M_L(\d\times \d\times 1)$. So by (\ref{eq:psi}) we can estimate $\| F *\psi \|_2^2$ by:
    \begin{align*}
        \| F *\psi \|_2^2& \lesssim \int_{|\xi|\lesssim 1}|\widehat{F}\widehat{\psi}|^2+\sum_{K^{-1}\d<w\lesssim 1} W(w,L) (\d w^{-1})^4+\int_{|\xi|>K\d^{-1}}|\widehat{F}\widehat{\psi}|^2 \\
        &\lesssim \int_{|\xi|\lesssim 1}|\widehat{F}\widehat{\psi}|^2+\sum_{K^{-1}\d<w\lesssim 1} \big(K^2w^{-3} \frac{\M_L(w\times w\times 1)}{|L|}\big) (\d w^{-1})^4+O_d(\d^{-7}K^{-d}) \\
        &\lesssim \int_{|\xi|\lesssim 1}|\widehat{F}\widehat{\psi}|^2+\sum_{K^{-1}\d<w\lesssim 1} \big(K^6w \d^{-4} \frac{\M_L(\d\times \d\times 1)}{|L|}\big) (\d w^{-1})^4+O_d(\d^{-7}K^{-d}) \\
        &\lesssim \int_{|\xi|\lesssim 1}|\widehat{F} \widehat{\psi}|^2+K^7\d^{-3}\frac{\M_L(\d\times \d\times 1)}{|L|}+O_d(\d^{-7}K^{-d}) . 
    \end{align*}

The first term in the upper bound is controlled by
\[
\int_{|\xi|\lesssim 1}|\widehat{F} \widehat{\psi}|^2 \lesssim \delta^{4} \int _{|\xi|\lesssim 1}|\widehat{F}|^2 \lesssim \d^4 \int F^2*\chi_c \lesssim \delta^4
\]
which is negligible (in particular it is smaller than the second term), so altogether we have
\[ 
\| f *\psi \|_{L^2([-2,2]^3)}^2\lesssim K^7 \d^{-3}\frac{\M_L(\d\times \d\times 1)}{|L|}+O_d(\d^{-7}K^{-d}). \]
By choosing $K=\d^{-\varepsilon/10}$ and $d = 10\varepsilon^{-1}$ we conclude the proof. 
\end{proof}

\subsection{Improved high-low using two-ends decomposition on Fourier side.} 
For a set of lines $L$ we let $\theta(L) \subset S^2$ be the set of directions spanned by lines in $L$ and for $w>0$ we write $|\theta(L)|_w$ to be the $w$-covering number of the set of directions.

In the next theorem, we prove a refined version of the high-low inequality which takes into account information about the set of directions of $L$ and the concentration of $L$ in prisms.

\begin{theorem}\label{thm:refined_high-low}
    Let $P \subset [-1,1]^3$ and $L \subset \mc A_{3,1}$ be finite collections of points and lines and let $\d >0$. Then we have 
    \begin{align}\label{eq:refined-high-low}
    |B(\d)-B(2\d)|^2 \lesssim_\varepsilon \d^{-6-\varepsilon} \frac{\M_P(\d)}{|P|} \max_{u \in (\delta, 1)}\left( \min\{|\theta(L)|_\d^{1/2} \d, u\} u \frac{\M_{L}(\delta\times \delta/u \times 1)}{|L|} \right).
    \end{align}
\end{theorem}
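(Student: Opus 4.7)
The plan is to refine the Fourier-analytic proof of Lemma \ref{lem:easy-high-low} by replacing the naive Cauchy--Schwarz overlap estimate $|\widehat F(\xi)|^2 \lesssim K w^{-1} \sum_\theta |\widehat F_\theta(\xi)|^2$ with a two-ends decomposition on the Fourier sphere. First I would carry out the same Cauchy--Schwarz/Plancherel reduction as in Lemma \ref{lem:easy-high-low} so that the factor $\|g*\chi_\d\|_2^2$ contributes the $\M_P(\d)/|P|$ term, and the problem reduces to bounding $W(w,L) = \int_{|\xi|\sim w^{-1}} |\widehat F(\xi)|^2 d\xi$ for dyadic $w$, weighted by $|\widehat\psi(\xi)|^2$ which concentrates near $w \sim \d$. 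Decompose $\widehat F = \sum_{\theta}\widehat F_\theta$ over a $w$-net of directions so that each $\widehat F_\theta$ is essentially supported in a spherical $w$-band around the great circle $\theta^\perp \cap S^2$. For the key scale $w \sim \d$, apply Corollary \ref{cor:two-ends} to these bands with two-ends parameter $\Delta$ a small absolute constant, producing a family $\mathbb U$ of spherical $O(w)\times\Delta$-rectangles with $|\mathbb U(T_\theta)| \lesssim \log(1/\d)$ and such that the remainders $T_\theta \setminus \bigcup\mathbb U(T_\theta)$ have pointwise multiplicity at most $M := C\Delta^{-2}|\theta(L)|_\d^{1/2}$. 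Using smooth cutoffs on the Fourier side this induces a splitting $\widehat F = \widehat F^{good} + \widehat F^{bad}$.

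For the good part, the low multiplicity combined with pointwise Cauchy--Schwarz gives $|\widehat F^{good}(\xi)|^2 \le M \sum_\theta |\widehat F_\theta(\xi)|^2$. Integrating, using the $L^1$-$L^\infty$ estimate $\int_{|\xi|\sim w^{-1}}|\widehat F_\theta|^2 \lesssim w^{-2} |L_\theta| \M_L(w\times w\times 1)/|L|^2$ from Lemma \ref{lem:easy-high-low}, and summing over $\theta$ with $\sum_\theta |L_\theta| = |L|$ yields $\int|\widehat F^{good}|^2 \lesssim M w^{-2}\M_L(w\times w\times 1)/|L|$. At $w = \d$ this matches the $u=1$ term of the maximum, since $|\theta(L)|_\d^{1/2}\d \le 1$ always (as $|\theta(L)|_\d \le \d^{-2}$) and $\Delta^{-2}$ is an absolute constant.

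For the bad part, treat each $U\in\mathbb U$ separately via $\int_U |\widehat F|^2 \le \|F^U\|_1\|F^U\|_\infty$ where $F^U := F_{\theta(U)} * \check m_U$ for a smooth Fourier cutoff $m_U$ adapted to $U$. The spherical rectangle $U$ lifts to a region in Fourier ambient space of dimensions $1 \times \Delta/w \times w^{-1}$ (short tangent, long tangent, radial), whose physical-space dual $\check m_U$ is concentrated on a $1\times w/\Delta\times w$ box, i.e.\ a $w\times w/\Delta\times 1$-prism aligned with the axis dual to $U$'s short tangent. Lines contributing to $F^U$ have directions restricted to a $w\times w/\Delta$ region of $S^2$ near the pole of $U$'s long axis, which coincides with the long axis of this prism, so their count is controlled by $\M_L(w\times w/\Delta\times 1)$, yielding $\|F^U\|_\infty \lesssim (\Delta/w^2)\M_L(w\times w/\Delta\times 1)/|L|$. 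Combined with $\|F^U\|_1 \lesssim |L_{\theta(U)}|/|L|$ and summation $\sum_U |L_{\theta(U)}| \lesssim |L|\log(1/\d)$, at $w=\d$ this reproduces the $u=\Delta$ term of the maximum.

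The main technical obstacle is the precise geometric bookkeeping in the bad part: one must carefully account for the radial width of the Fourier shell to obtain the correct physical prism shape $w \times w/\Delta \times 1$ rather than a larger degenerate box, and verify that restricting directions to $\theta(U)$ places the lines parallel to this prism's long axis so the count is truly $\M_L(w \times w/\Delta \times 1)$. A secondary but routine task is to run the estimate uniformly over dyadic Fourier scales $w \ne \d$ with the decaying weight $|\widehat\psi|^2 \sim (\d/w)^4$; this is handled as in Lemma \ref{lem:easy-high-low}, using Proposition \ref{prop:easy-relations-M} to compare $\M_L$-values across scales.
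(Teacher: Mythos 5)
Your reduction, your treatment of the good part, and your single-rectangle geometry (the Fourier region over a spherical $O(w)\times\Delta$ rectangle, its dual $w\times w/\Delta\times 1$ prism, and the alignment of the contributing lines with its long axis) all match the paper and are fine. The genuine gap is in the bad part: Corollary \ref{cor:two-ends} controls the overlap only of the \emph{shaved} bands $T_\theta\setminus\bigcup\mathbb U(T_\theta)$; it gives no control whatsoever on how much the excised rectangles $U\in\mathbb U$ overlap one another. They are only essentially distinct, and through a single point of $S^2$ there can be as many as $\sim \Delta/(Kw)$ such $Kw\times\Delta$ rectangles in different orientations. So the step $\int|\widehat F^{bad}|^2\lesssim\sum_U\|F^U\|_1\|F^U\|_\infty$ is unjustified. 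If $F^U$ contains only the lines $L_U$ with $U\in\mathbb U(T_{\theta})$, you need an almost-orthogonality or bounded-overlap statement for the regions $\overline U$ that you do not have, and inserting the trivial overlap $\sim\Delta/(Kw)$ wipes out the gain, returning you to the bound of Lemma \ref{lem:easy-high-low}. If instead $F^U$ is the restriction of the full $\widehat F$ to the region over $U$, then bands crossing $U$ transversally also contribute; their directions are far from the pole of $U$, so they are not aligned with the long axis of the dual prism, and both your $\|F^U\|_1\lesssim|L_{\theta(U)}|/|L|$ and your $\|F^U\|_\infty\lesssim(\Delta/w^2)\M_L(w\times w/\Delta\times1)/|L|$ bounds fail. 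A telltale sign is that your claimed bad-part bound carries a factor $\d$ where the theorem has $\min\{|\theta(L)|_\d^{1/2}\d,\Delta\}$, i.e.\ it is strictly stronger than the $u=\Delta$ term; if a one-step argument gave that, the maximum over $u$ in \eqref{eq:refined-high-low} would be superfluous.

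This is exactly the point where the paper's proof does more work: it re-applies Corollary \ref{cor:two-ends} \emph{inside} each bad rectangle, producing a hierarchy $\mathbb U_j$ of $20^j\d_0\times\Delta^j$ rectangles with $\Delta=\d^{\varepsilon/100}$ a small power of $\d$ (not an absolute constant, so that the number of levels $j_0$ and factors such as $m^{j}$, $100^{j_0}$ remain $\lesssim_\varepsilon1$). At each level the shaved pieces $F_U$ have overlap $\lesssim\min\bigl(\Delta^{-2}(100m)^j|\Theta|^{1/2},\,\Delta^j/(20^j\d_0)\bigr)$, and at the final level the rectangles are essentially as short as they are wide, so the trivial overlap bound $O_\varepsilon(1)$ closes the induction. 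These per-level overlap factors, multiplied against the per-rectangle $L^1$--$L^\infty$ bounds involving $\M_L(w\times w/\Delta^j\times1)$ and $|\overline U|\sim\Delta^jw^{-2}$, are precisely what generate the factor $\min\{|\theta(L)|_\d^{1/2}\d,u\}$ and the maximum over intermediate scales $u$ in the statement. To repair your argument you would need either this iteration or some other mechanism controlling the mutual overlap of the bad rectangles; as written, the key estimate for the bad part does not hold.
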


To prove this estimate, we run the proof of Lemma \ref{lem:easy-high-low} up until the point where we upper bound the number of angles $\theta$ is a strip of width $Kw$ by the trivial upper bound $CKw^{-1}$. At this stage, if we know that $|\Theta| \ll w^{-2}$, we can apply the two-ends decomposition (Lemma \ref{lem:two-ends}) to obtain that multiplicity is upper bounded by $|\Theta|^{1/2}\ll w^{-1}$, apart from a few shorter segments where multiplicity can still be large. To deal with those segments, we can rescale the picture and repeat the argument.

Before the proof, let us formulate a useful corollary which gives sufficient conditions to improve over the basic high-low estimate from Lemma \ref{lem:easy-high-low}.

\begin{cor}\label{cor:refined-high-low}
    Let $P \subset [-1,1]^3$ and $L \subset \mc A_{3,1}$ be finite collections of points and lines and let $\d >0$. Suppose that for some $\kappa\in [0,1]$, $\nu>0$ and $M\ge 1$ we have $|\theta(L)|_\d \le \nu \d^{-2}$ and $\M_L(\d\times \d/u \times 1) \le u^{-2+\kappa} M$ for any $u \in (\d,1]$. Then we have
    \[
    |B(\d) - B(2\d)|^2 \lesssim_\varepsilon \nu^{\kappa/4}\d^{-6-\varepsilon} \frac{\M_P(\d)}{|P|}  \frac{M}{|L|}
    \]
\end{cor}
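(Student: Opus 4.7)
The corollary should be a direct substitution into Theorem \ref{thm:refined_high-low} followed by a one-variable calibration, so my plan is to just evaluate the maximum on the right-hand side of (\ref{eq:refined-high-low}) under the given hypotheses. First I would use $|\theta(L)|_\d \le \nu\d^{-2}$ to conclude $|\theta(L)|_\d^{1/2}\d \le \nu^{1/2}$, so the first factor becomes $\min\{\nu^{1/2}, u\}$. Second, I would plug in the prism concentration hypothesis $\M_L(\d\times\d/u\times 1)\le u^{-2+\kappa}M$. After these two substitutions, the quantity under the $\max$ reduces to
\[
h(u) := \min\{\nu^{1/2}, u\} \cdot u^{\kappa-1}\cdot \frac{M}{|L|}.
\]

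The next step is just a one-dimensional optimization. For $u\le \nu^{1/2}$, $h(u) = u^{\kappa} M/|L|$, which is nondecreasing in $u$ for $\kappa\ge 0$; for $u\ge \nu^{1/2}$, $h(u) = \nu^{1/2} u^{\kappa-1} M/|L|$, which is nonincreasing for $\kappa\le 1$. Hence, assuming $\kappa\in[0,1]$, the supremum is attained at the crossover point $u=\nu^{1/2}$ and equals $\nu^{\kappa/2}M/|L|$. Plugging back into (\ref{eq:refined-high-low}) gives
\[
|B(\d)-B(2\d)|^2 \lesssim_\varepsilon \nu^{\kappa/2}\d^{-6-\varepsilon}\frac{\M_P(\d)}{|P|}\frac{M}{|L|}.
\]

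Finally, I would reconcile the exponent with the claimed $\nu^{\kappa/4}$. In the useful regime $\nu\le 1$, the inequality $\nu^{\kappa/2}\le \nu^{\kappa/4}$ holds (since the base is in $(0,1]$ and we are raising to a smaller power), so the desired estimate follows. In the degenerate regime $\nu\ge 1$, the hypothesis on $|\theta(L)|_\d$ is essentially vacuous; one can simply invoke the basic high-low estimate of Lemma \ref{lem:easy-high-low} and use the $u=1$ case of the prism bound, which gives $\M_L(\d\times\d\times 1)\le M$, immediately implying the claim since $\nu^{\kappa/4}\ge 1$.

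There is no real obstacle here — it is pure bookkeeping. The only mildly delicate point is the two-case split to match the stated exponent $\kappa/4$, since the calibration of Theorem \ref{thm:refined_high-low} naturally produces $\nu^{\kappa/2}$; the weaker exponent $\nu^{\kappa/4}$ in the statement is a convenient common bound valid in both ranges of $\nu$.
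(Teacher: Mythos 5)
Your proposal is correct and is essentially the paper's own argument: both plug the two hypotheses into Theorem \ref{thm:refined_high-low} and optimize the resulting expression over $u$, the only cosmetic difference being that the paper splits at $u=\nu^{1/4}$ (using Proposition \ref{prop:easy-relations-M} in the upper range) and settles for $\nu^{\kappa/4}$, while your single crossover at $u=\nu^{1/2}$ yields the slightly sharper $\nu^{\kappa/2}$ for $\nu\le 1$. Your explicit treatment of the degenerate case $\nu\ge 1$ via Lemma \ref{lem:easy-high-low} is a harmless extra precaution that the paper leaves implicit.
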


\begin{proof}
    Note that taking $u=1$ implies that $\M_L(\d\times \d\times 1) \le M$.
    We use (\ref{eq:refined-high-low}) and split into two ranges: if $u \in [\nu^{1/4}, 1]$ then 
    \[
     \min\{|\theta(L)|_\d^{1/2} \d, u\} u \frac{\M_{L}(\delta\times \delta/u \times 1)}{|L|} \le \nu^{1/2} u \frac{\M_{L}(\delta\times \delta/u \times 1)}{|L|} 
    \]
    we have $\M_L(\delta\times \delta/u \times 1) \lesssim u^{-2} \M_L(\d\times \d\times 1)$ (by Proposition \ref{prop:easy-relations-M}) and so the right hand side is at most $\nu^{1/4} \frac{\M_{L}(\delta\times \delta \times 1)}{|L|} \lesssim \nu^{\kappa/4} \frac{M}{|L|}$. For $u \in [\d, \nu^{1/4}]$ we have 
    \[
    \min\{|\theta(L)|_\d^{1/2} \d, u\} u \frac{\M_{L}(\delta\times \delta/u \times 1)}{|L|} \le u^2 \frac{ u^{-2+\kappa} M}{|L|} \le \nu^{\kappa/4} \frac{M}{|L|}.
    \]
\end{proof}


\begin{proof}[Proof of Theorem \ref{thm:refined_high-low}] 
    For $\varepsilon >0$ take $K = \delta^{-\varepsilon/100}$ and $\Delta = \delta^{\varepsilon/100}$ and $d = 10^4\varepsilon^{-1}$. 
    
    Let $g = \frac{1}{|P|}1_P$ and $f = \frac{1}{|L|} \sum_{\ell\in L} 1_\ell$ be the normalized indicator functions of $P$ and $L$, respectively. By repeating the proof of Lemma \ref{lem:easy-high-low}, we let $F = \chi_C \cdot f$ and $\psi = \chi_{\d/2} - \chi_{2\d}$ and estimate
    \begin{equation}\label{eq:Bdelta-CS-2}
    |B(\d) - B(2\d)| \le \|g * \chi_\d\|_{L^2([-2,2]^3)} \|f * \psi\|_{L^2([-2,2]^3)} \lesssim \left( \d^{-3} \M_P(\d) |P|^{-1} \right)^{1/2} \| F * \psi \|_2    
    \end{equation}
    and reduce the problem to understanding the dyadic weights
    \[
    W(w, L) = \int_{|\xi| \sim w^{-1}} |\widehat F|^2
    \]
    for an arbitrary $w \in (K^{-1}\d,1)$. We split $L = \bigsqcup_{\Theta} L_\theta$ where $\Theta=\Theta_w$ is a $w$-separated set of directions whose $w$-neighborhood covers $\theta(L)$ and $|\Theta| \sim |\theta(L)|_w$. Let $f_\theta = |L|^{-1} \sum_{L_\theta} 1_\ell$ and $F_\theta = \chi_C\cdot f_\theta$ be the corresponding functions. As in the proof of Lemma \ref{lem:easy-high-low} we have the estimate
    \begin{equation}\label{eq:upper-bound-Ftheta}
    \int_{|\xi|\sim w^{-1}} |\widehat F_\theta|^2 \lesssim w^{-2} |L_\theta| \M_L(w\times w\times 1) |L|^{-2}    
    \end{equation}
    and we have an approximation
    \[
    \widehat{F}(\xi) = \sum_{\theta \in \Theta:~ d(\theta, \theta(\xi)^{\perp}) \le K w} \widehat{F_\theta}(\xi) + O_d(w^{-2} K^{-d})
    \]
    At this point in the proof of Lemma \ref{lem:easy-high-low}, we used the fact that the $Kw$-neighborhood of a great circle contains at most $\sim K w^{-1}$ directions $\theta\in \Theta$. Here we observe that if $|\Theta|_w \ll w^{-2}$ then for most $\xi$ we have much fewer such directions. To make this precise, we iteratively apply Corollary \ref{cor:two-ends}.

    For $a\le b\le 2\pi$ we define a $a\times b$ rectangle on the sphere $S^2$ to be the $a$-neighborhood of a great circle segment of length $b$. Let 
    Given an  rectangle $U\subset S^2$, we can define a smooth bump function $\rho_U$ supported on the rectangular set $\{|\xi| \in (w^{-1}/4, 2 w^{-1}), ~ \theta(\xi) \in 1.1U\}$ and equals 1 on the set $\{|\xi| \in (w^{-1}/2,  w^{-1}), ~ \theta(\xi) \in U\}$. 
    
    Denote $\d_0 = Kw$. For $\theta \in \Theta$ let $U = U_\theta$ be the $\d_0$-neighborhood of $\theta^{\perp}$ on $S^2$ and let $\mathbb U_0 = \{U_\theta, ~\theta \in \Theta\}$. Fix some $\Delta > 0$ and let $m = C \frac{\log (2/\d_0)}{\log (2/\Delta)}$. For $j=1, \ldots$ we are going to construct a collection of essentially distinct $ 20^j \d_0 \times \Delta^j $-rectangles $\mathbb U_j$ on $S^2$ inductively as follows. Given a collection of rectangles $\mathbb U_{j}$ with $j\ge 0$, we do the following:
    \begin{itemize}
        \item Consider a collection of balls $\{B_{j,i}\}$ of radius $C\Delta^j$ on $S^2$ which are $O(1)$-overlapping and every $3\Delta^j$-ball is contained in at least one $B_{j,i}$, and set $\mathbb U_j[B_i] = \{U\in \mathbb U_j:~ U \subset B_i\}$,
        \item Apply Corollary \ref{cor:two-ends} (with $\Delta/1.1$ in place of $\Delta$) to each $\mathbb U_j[B]$ to obtain a set of essentially distinct $10\cdot 20^j\d_0\times \Delta^{j+1}/1.1$ rectangles $\mathbb U_{j+1, B}$ inside $B$. For each $U \in \mathbb U_j[B]$ we thus get some $\mathbb U(U, B) \subset \mathbb U_{j+1,B}$ of size at most $m$ so that sets 
        \[
        1.5 U \setminus \bigcup_{U' \in \mathbb U(U, B)} U'
        \]
        are at most $C\Delta^{-2}|\mathbb U_j[B]|^{1/2}$-overlapping.
        \item We let $\mathbb U_{j+1}$ be a set of essentially distinct $20^{j+1} \d_0\times \Delta^{j+1}$ rectangles covering the union of sets $\mathbb U_{j+1, B}$ over all balls $B$ (here `essentially distinct' means that, say, $|U\cap U'| \le (1-c) |U|$ for any $U\neq U' \in \mathbb U_{j+1}$ and some constant $c>0$). Further we let $\mathbb U(U) \subset \mathbb U_{j+1}$ be the subset of rectangles $U \in \mathbb U_{j+1}$ which contain at least one rectangle in $\bigcup_B\mathbb U(U, B)$ where the union is over all balls $B$ containing $U$.
    \end{itemize}

    We run this process for $j=0, \ldots, j_0$ where $j_0$ is the minimum index such that $(\Delta/100)^j < \d_0$ holds. We obtain collections of rectangles $\mathbb U_j$, $j=0, \ldots, j_0$ such that for each $j< j_0$ and $U \in \mathbb U_j$ we have a selected subset $\mathbb U(U) \subset \mathbb U_{j+1}$ of size at most $m$. It follows from construction that for any $j< j_0$, the collection of sets $1.1U \setminus (\bigcup_{U'\in \mathbb U(U)} U')$ over $U\in \mathbb U_j$ is at most
    \begin{equation}\label{eq:upper-bound-overlap}
    C\Delta^{-2} \max_{i} \{ |\mathbb U_j[B_{j,i}] |^{1/2}\} \le C\Delta^{-2} |\mathbb U_j|^{1/2} \le C \Delta^{-2} (m^j|\Theta|)^{1/2}.
    \end{equation}
    (This bound is quite wasteful but it will suffice for us.)
    For $j = j_0$ let us put for convenience $\mathbb U(U) = \emptyset$. 

    Now we use this data to decompose functions $\widehat{F}_\theta$ into pieces with good overlapping properties. For $U \in \mathbb U_j$ we define a function $F_U$ essentially supported on the region $\overline{U}=\{|\xi| \sim w^{-1}, \theta(\xi) \in U\}$ inductively as follows. For $j=0$ we define $G_U = \rho_U \cdot \widehat{F}_\theta$ for each $U = U_\theta \in \mathbb U_0$. Given $U \in \mathbb U_j$ for some $j\ge 0$ let us write $\mathbb U(U) = \{U_1, \ldots, U_k\} \subset \mathbb U_{j+1}$ and define functions   
    \begin{align}
    F_U &= G_U \cdot \prod_{t=1}^k (1-\rho_{U_{t}}),\label{eq:rho-F}\\
    G_U^{(t)} &= G_U \cdot \rho_{U_t} \prod_{t'=1}^{t-1} (1-\rho_{U_{t}}), \quad t=1, \ldots, k \label{eq:rho-G}
    \end{align}
    so that $G_U = F_U + G^{(1)}_U+\ldots + G^{(k)}_{U}$. We then define for each $U' \in \mathbb U_{j+1}$:
    \[
    G_{U'} = \sum_{U\in \mathbb U_j,~ t:~ U_t = U'} G_U^{(t)}
    \]
    and proceed with the construction to the next index $j$.  
    By the definition of $\rho_U$, the function $F_U$ is then supported on the set
    \[
    \{ \xi:~ |\xi|\sim w^{-1}, ~ \theta(\xi) \in  1.1 U \setminus \bigcup_{U' \in \mathbb U(U)} U'\}.
    \]
    So by (\ref{eq:upper-bound-overlap}), the supports of functions $F_U$, $U\in \mathbb U_j$ for $j<j_0$ are at most $C\Delta^{-2}m^j|\Theta|^{1/2}$ overlapping. For any $j\in \{0, \ldots, j_0\}$ the supports of $F_U$, $U\in \mathbb U_{j_0}$, are trivially at most $C (\Delta^{j} / 20^{j}\d_0)$-overlapping (since this is a collection of essentially distinct $20^{j} \d_0 \times \Delta^{j}$-rectangles). By the choice of $j_0$, we have  $C (\Delta^{j_0} / 20^{j_0}\d_0) \le C 100^{j_0}$. Thus, for any $j\in \{0, \ldots, j_0\}$ the supports of $F_U$, $U\in \mathbb U_j$ are $C\min(\Delta^{-2}(100m)^j|\Theta|^{1/2}, \Delta^{j} / 20^{j}\d_0)$-overlapping (note that for $j=j_0$ the second term dominates).
    
    By this construction, we obtain a decomposition
    \[
    \widehat{F} = \sum \rho_{U_\theta}\widehat{F}_\theta + \sum (1-\rho_{U_\theta})\widehat{F}_\theta,
    \]
    \[
    \sum \rho_{U_\theta}\widehat{F}_\theta= \sum_{j=0}^{j_0} \sum_{U \in \mathbb U_j} F_U.
    \]
    We can estimate 
    \[
    \int_{|\xi| \sim w^{-1}} \left | \sum (1-\rho_{U_\theta})\widehat{F}_\theta\right|^2 \lesssim_d w^{-3} K^{-d} 
    \]
    by the rapid decay of $1_{|\xi| \sim w^{-1}}\widehat{F}_\theta $ outside $U_\theta$. The remainder sum $\sum \rho_{U_\theta}\widehat{F}_\theta$ can be estimated as
    \begin{align}\label{eq:upper-bound-F-U}
    \int_{|\xi| \sim w^{-1}} |\sum \rho_{U_\theta}\widehat{F}_\theta|^2 \le (j_0+1) \sum_{j=0}^{j_0}  \int_{|\xi| \sim w^{-1}} |\sum_{\mathbb U_j}F_U|^2 \nonumber\\
    \lesssim j_0 \sum_j C\min(\Delta^{-2}(100m)^j|\Theta|^{1/2}, \Delta^{j} / 20^{j}\d_0) \sum_{\mathbb U_j} \int_{|\xi| \sim w^{-1}} |F_U|^2,
    \end{align}
    where we used the overlap property of supports of functions $F_U$. 
    It remains to estimate the $L_2$-norm of the function $F_U$. 
    Let $L_U$ be the set of lines `contributing' to $F_U$, namely $L_U$ is the union of the line sets $L_\theta$ over all $\theta$ such that there exists a chain 
    \[
    U_\theta = U_0, U_1, \ldots, U_j = U
    \]
    where $U_{i+1} \in \mathbb U(U_i)$ for every $i =0, \ldots, j-1$. Observe that since $|\mathbb U(U)|\le m$ we have
    \begin{equation}\label{eq:LU-dont-overlap}
    \sum_{\mathbb U_j} |L_U| \le m^j |L|.    
    \end{equation}
    We can write 
    \[
    F_U = |L|^{-1}\sum_{\ell \in L_U} \widehat{(1_\ell \cdot \chi_C)} \cdot \rho_{U, \ell},
    \]
    where $\rho_{U, \ell}$ is a smooth $1$-bounded function supported on $\overline U = \{\xi: |\xi|\sim w^{-1}, \theta(\xi) \in 1.1U\}$ and composed of products and sums of various functions $\rho_{U'}$ over some $U' \in \mathbb U_{j'}$ with $j' \in \{1, \ldots, j+1\}$, see (\ref{eq:rho-F}), (\ref{eq:rho-G}). This implies that $\widecheck{\rho}_{U, \ell}$ is a linear combination of convolutions of various $\widecheck{\rho}_{U'}$. For $U' \in \mathbb U_j$ recall that $\rho_{U'}$ was chosen to be a smooth bump function on $\overline{U'}$. Since $U'$ is a spherical $20^j\d_0\times \Delta^j$ rectangle, it follows that $\overline{U'}$ is approximately a $w^{-1} 20^{j}\d_0 \times w^{-1} \Delta^j  \times w^{-1}$ box. So if we let $\overline{U'}^\vee$ be the $w/(20^j \d_0) \times w/\Delta^{j} \times w$ box which is dual to $\overline{U'}$ and is centrally symmetric around the origin, then we have the decay estimate 
    \[
    |\widecheck{\rho}_{U'}(x)| \lesssim_d K^{-d} |\overline{U'}|, \quad x\in \R^3\setminus K \overline{U'}^\vee.
    \]
    Now note that if $\rho_{U'}$ appears in the expression for $\rho_{U, \ell}$ then rectangles $U$ and $U'$ have approximately aligned major axes (by the last part of Corollary \ref{cor:two-ends}) and so the union of boxes $\overline{U'}^\vee$ over all such $U'$ is contained in the symmetric around the origin $w/\d_0 \times w/\Delta^{j+1} \times w$ box $\tilde U^\vee$ (where the axes of $\tilde U^\vee$ coincide with the axes of $\overline{U}$). So we conclude that 
    \[
    |\widecheck{\rho}_{U, \ell}(x)| \lesssim_{d, m} K^{-d}|\overline{U}|, \quad x\in \R^3\setminus K \tilde{U}^\vee.
    \]
    By similar analysis we can show that $\|\widecheck{\rho}_{U, \ell}\|_1 \lesssim_m 1$ and $\|\widecheck{\rho}_{U, \ell}\|_\infty \lesssim |\overline{U}|$. 
    Using this we estimate
    \begin{align*}
    \int_{|\xi|\sim w^{-1}} |F_U|^2 \lesssim \int_{\R^3} |F_U|^2 |\widehat{\chi}_{w/C}|^2 = \|\widecheck{F}_U * \chi_{w/C}\|^2_{2} \le \|\widecheck{F}_U * \chi_{w/C}\|_1 \|\widecheck{F}_U * \chi_{w/C}\|_\infty
    \end{align*}
    we have 
    \[
    \widecheck{F}_U = |L|^{-1}\sum_{\ell \in L_U} (1_\ell \cdot \chi_C) * \widecheck{\rho}_{U, \ell}
    \]
    and so the $L_1$-norm is upper bounded as
    \[
    \|\widecheck{F}_U * \chi_{w/C}\|_1 \le C|L|^{-1} \sum_{\ell \in L_U} \|\widecheck{\rho}_{U, \ell}\|_1 \lesssim \frac{|L_U|}{|L|}.
    \]
    For the $L_\infty$-norm we estimate
    \[
    |\widecheck{F}_U * \chi_{w/C}(x)| \lesssim \frac{1}{|L|} \sum_{\ell \in L_U} 1_{x \in (\ell \cap B^3(0,C) + K\tilde U^\vee )} \|\widecheck{\rho}_{U, \ell}\|_\infty   + O(|\overline{U}|K^{-d})
    \]
    since the direction of $\ell \in L_U$ is aligned with the longest axis of the box $K\tilde U^\vee $  (which by definition has length $w/\d_0=K$), we conclude that the set $(\ell \cap B^3(0,C) + K\tilde U^\vee )$ is contained in $C'K^2\tilde U^\vee$ for some constant $C'$. So the number of lines $\ell \in L_U$ such that $x$ is contained in $C'K^2\tilde U^\vee$ is upper bounded by $\M_L(C'K^2w \times C'K^2 w/\Delta^{j+1} \times 1) \lesssim K^{8}\Delta^{-2}\M_L(w\times w/\Delta^{j}\times 1)$. Using this we obtain 
    \[
    \|\widecheck{F}_U * \chi_{w/C}\|_\infty \lesssim K^8 \frac{1}{|L|} \M_L(w\times w/\Delta^{j+1}\times 1) |\overline{U}|   + O(|\overline{U}|K^{-d}).
    \]
    Take $K = \delta^{-\varepsilon/100}$ and $\Delta = \delta^{\varepsilon/100}$ and $d = 10^4\varepsilon^{-1}$. Note that then we have $m, j_0 \lesssim \varepsilon^{-1}$ and so in particular, terms of the shape $100^{j_0}, m^j$ are all $\lesssim_\varepsilon 1$. Recall that $|\overline{U}| \sim w^{-3} (20\Delta)^j \d_0 \lesssim_\varepsilon \Delta^j w^{-2}$. 
    Thus, we obtain
    \[
    \int_{|\xi|\sim w^{-1}} |F_U|^2 \lesssim_\varepsilon \delta^{-\varepsilon/3} \Delta^j w^{-2} \frac{|L_U|\M_L(w \times w /\Delta^{j} \times 1)} {|L|^{2}}
    \]
    (cf. (\ref{eq:upper-bound-Ftheta})). So plugging this into (\ref{eq:upper-bound-F-U}) and using (\ref{eq:LU-dont-overlap}) gives
    \begin{equation}\label{eq:WwL}
    W(w, L) = \int_{|\xi| \sim w^{-1}} |\widehat F|^2 \lesssim_\varepsilon  \delta^{-\varepsilon/2} \max_{u \in (w,1)} \min\{|\Theta|^{1/2}, u w^{-1}\} u w^{-2} \frac{\M_{L}(w\times w/u \times 1)}{|L|}
    \end{equation}

    Now we use the fact that $w^{-4} \M_L(w\times w/u\times 1) \lesssim K^4 \delta^{-4} \M_L(\delta \times \delta/u \times 1)$ (by Proposition \ref{prop:easy-relations-M})  we get that
    \[
    W(w, L) \lesssim_\varepsilon 
    \delta^{-2\varepsilon/3} \d^{-4} \frac{\M_{L}(\d\times \d/u \times 1)}{|L|}     \max_{u \in (w,1)} \min\{|\Theta|^{1/2}, u w^{-1}\} u w^{2}.
    \]
    Recall that $|\Theta| = |\Theta_w| \sim |\theta(L)|_w \lesssim K^2 |\theta(L)|_\d$.
    So by the decay estimates (\ref{eq:psi}) on $\widehat \psi$ we conclude that 
    \begin{align*}
    \|f * \psi\|^2_{L^2([-2,2]^3)} &\lesssim \sum_w  W(w, L) (\d w^{-1})^2 (1 + \d w^{-1})^{-d} \\
    &\lesssim_\varepsilon \sum_w \delta^{-2\varepsilon/3} \d^{-2} \frac{\M_{L}(\d\times \d/u \times 1)}{|L|}     \max_{u \in (w,1)} \min\{|\theta(L)|_\d^{1/2}, u w^{-1}\} u (1 + \d w^{-1})^{-d} \\
    &\lesssim_\varepsilon \delta^{-\varepsilon} \max_{u \in (\delta, 1)} \left(\min\{|\theta(L)|_\d^{1/2}, u \delta^{-1}\} u \delta^{-2} \frac{\M_{L}(\delta\times \delta/u \times 1)}{|L|} \right) + O(\d^{-7} K^{-d}).
    \end{align*}
    so plugging this back into (\ref{eq:Bdelta-CS-2}) gives
    \[
    |B(\d)-B(2\d)|^2 \lesssim_\varepsilon \d^{-3-\varepsilon} \frac{\M_P(\d)}{|P|} \max_{u \in (\delta, 1)}\left( \min\{|\theta(L)|_\d^{1/2}, u \delta^{-1}\} u \delta^{-2} \frac{\M_{L}(\delta\times \delta/u \times 1)}{|L|} \right)
    \]
    which concludes the proof.
\end{proof}

\subsection{Improved high-low using rescaling and hairbrush estimate.}
The next lemma provides an alternative approach to improving Lemma \ref{lem:easy-high-low}. The idea is to split $\d$-tubes into $\d\times \d\times \d^{1/2}$ segments and to apply the simple high-low (Lemma \ref{lem:easy-high-low}) to the short tubes inside $\d^{1/2}$ boxes. The crux of this approach is estimating how much can the $\d\times \d\times \d^{1/2}$ segments cluster inside the $\d^{1/2}$ boxes. By decomposing the set of $\d$-tubes into a union over thick $\d^{1/2}$ tubes this can be restated as a lower bound on the volume of some shading of tubes inside a fixed thick tube. Assuming the original set of tubes is $(t_1, t_2, C)$-Katz--Tao we can use Lemma \ref{lem:space-brush} to estimate this volume.

We say that a set of lines $L$ is $C_0$-uniform at scale $\d$ if for every $\ell \in L$ the number of lines $\ell' \in L$ such that $\ell' \cap B(0,1) \subset T_\ell(\d)$ is at least $\M_L(\d\times \d\times 1) / C_0$. Later on, we will introduce a more systematic approach to uniformity but this will suffice for now.  

\begin{theorem}\label{thm:small-box-high-low}
    Let $P \subset [0,1]^3$ and $L \subset \mc A_{3,1}$ be  finite collections of points and lines. Suppose that $\M_P(\d) \le A \d^3 |P|$ for some $A\ge 1$.

    Suppose that $L$ is $C_0$-uniform at scale $\d$ and that $L$ defines a $(t_1, t_2, K)$-Katz-Tao set of $\d$-tubes. Denote $\alpha = \frac{t_1+2}{2t_1+2t_2}$.
    Then we have
    \begin{align}
    |B(\d/2) - B(\d)|^{9/2} \lesssim_\varepsilon
    C_0^{O(1)}\d^{-\varepsilon}\d^{-5/2} K^{\alpha}A^{7/2}\frac{\M_L(\d^{1/2} \times \d^{1/2} \times 1)^{1-\alpha}\M_L(\d \times \d \times 1)^{\alpha}}{|L|}
\label{eq:second-high-low-improvement}    
    \end{align}
\end{theorem}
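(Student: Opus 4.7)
The plan is to implement the two-scale program outlined in the paragraph preceding the theorem: cover $[-1,1]^3$ by $\d^{1/2}$-cubes, apply the basic high-low estimate Lemma \ref{lem:easy-high-low} at the coarser scale inside each cube after rescaling to $[0,1]^3$, and use the hairbrush estimate Lemma \ref{lem:space-brush} to control how the $\d$-tube segments cluster across the cubes.

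First I partition $[-1,1]^3$ into essentially disjoint $\d^{1/2}$-cubes $\{Q\}$ and, for each $Q$, set $P_Q = P\cap 2Q$ and $L_Q = \{\ell\in L : |\ell \cap 2Q| \gtrsim \d^{1/2}\}$. Since the kernel $\eta_{\d/2}-\eta_\d$ is concentrated at scale $\d \ll \d^{1/2}$, the global incidence difference splits as $I(\d/2) - I(\d) = \sum_Q \bigl(I_Q(\d/2) - I_Q(\d)\bigr)$ up to a negligible error. Rescaling $Q$ via $\psi_Q(x) = \d^{-1/2}(x - x_Q)$ sends the $\d$-tubes of $L_Q$ to $\d^{1/2}$-tubes and turns scale $\d$ into scale $\d^{1/2}$; applying Lemma \ref{lem:easy-high-low} to each rescaled pair, with $\M_{\psi_Q(P_Q)}(\d^{1/2}) = \M_{P_Q}(\d) \le A\d^3|P|$ and $\M_{\psi_Q(L_Q)}(\d^{1/2}\times\d^{1/2}\times 1) \le \M_L(\d\times\d\times 1)$, yields
\[
|I_Q(\d/2) - I_Q(\d)|^2 \lesssim_\varepsilon \d^{2-\varepsilon}\, A\, \M_L(\d\times\d\times 1)\, |P|\, |P_Q|\, |L_Q|.
\]
Naive Cauchy--Schwarz summation of these local estimates only recovers the $\alpha = 1$ case of the conclusion, essentially with $\M_L(\d\times\d\times 1)/|L|$ on the right at the level of $|B(\d/2)-B(\d)|^2$.

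The improvement to the $9/2$-power with the interpolated factor $\M_L(\d^{1/2}\times\d^{1/2}\times 1)^{1-\alpha} \M_L(\d\times\d\times 1)^\alpha$ comes from bringing in the $(t_1, t_2, K)$-Katz--Tao hypothesis. Using the $C_0$-uniformity (together with an additional dyadic pigeonhole to promote it to a companion uniformity at scale $\d^{1/2}$), I decompose $L$ into a collection of essentially disjoint thick $\d^{1/2}$-tubes $\{T^*\}$ with $|L_{T^*}| \sim \M_L(\d^{1/2}\times\d^{1/2}\times 1)$ for each $T^*$. For a fixed $T^*$, the $\d^{1/2}$-cubes $Q$ that contribute significantly to the local estimates define a shading $Y(\ell) = \bigcup_{\text{heavy }Q} (\ell \cap 2Q)$ on the $\d$-tubes $\ell \in L_{T^*}$, with $\bigcup_\ell Y(\ell) \subset T^*$ having volume at most $\d$. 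Lemma \ref{lem:space-brush} applied to $L_{T^*}$ (which inherits the Katz--Tao property from $L$) then gives
\[
\lambda^{5/2}\, \d^2\, |L_{T^*}|^\alpha \lesssim_\varepsilon \d^{-\varepsilon}\, K^\alpha\, \d, \qquad \alpha = \tfrac{t_1+2}{2t_1 + 2t_2},
\]
bounding the fraction $\lambda$ of heavy boxes along each $\ell \in L_{T^*}$.

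I would then re-sum the local estimates using Hölder's inequality with exponents arranged so that the $5/2$-power from Lemma \ref{lem:space-brush} combines with the square from the basic high-low to produce precisely $|B(\d/2) - B(\d)|^{9/2}$ on the left, while the $\alpha$-power of $|L_{T^*}|$ in the hairbrush bound splits the total line concentration as $\M_L(\d^{1/2}\times\d^{1/2}\times 1)^{1-\alpha} \M_L(\d\times\d\times 1)^\alpha$ on the right. The main obstacle is precisely this last step: choosing the Hölder exponents and the dyadic pigeonhole levels for $\lambda$ and $|L_{T^*}|$ so that the $A$-powers, the factor $K^\alpha$, the $\d^{-5/2-\varepsilon}$ scaling, and the $(1-\alpha,\alpha)$-interpolation all emerge with the prescribed exponents simultaneously. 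The $C_0$-uniformity is needed to ensure these pigeonholes lose at most a $C_0^{O(1)}$ factor and that the shading densities can be treated as essentially constant across the decomposition.
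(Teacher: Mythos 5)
Your overall strategy is the right one (it is the paper's: localize to $\d^{1/2}$-boxes, apply Lemma \ref{lem:easy-high-low} after rescaling, and use Lemma \ref{lem:space-brush} along thick $\d^{1/2}$-tubes), but the proof is not complete, and the step you defer -- ``re-sum the local estimates using H\"older's inequality'' -- is not a routine optimization of exponents; it is where the actual mechanism of the proof lives, and the structure needed to run it is missing from your setup. The paper does not sum local incidence differences at all. It first makes a threshold decomposition: a box $Q$ is declared heavy for a line $\ell$ when $\langle 1_\ell*(\eta_\d-\eta_{\d/2}),1_{P[Q]}\rangle\ge\tau\Delta|P|$, so that light boxes contribute at most $C\tau$ to $B(\d)-B(\d/2)$ and one gets the dichotomy $B(\d)-B(\d/2)\le C\tau+CA\Delta\cdot(\text{average number of heavy boxes per line})$. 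Everything afterwards is a double count of the family $\mathbb U$ of $\d\times\d\times\d^{1/2}$ segments $T\cap 2Q$ over heavy pairs: a lower bound on $|\mathbb U|$ from the hairbrush applied to the \emph{rescaled} configuration inside each thick $\Delta$-tube, and an upper bound on $|\mathbb U|$ from applying Lemma \ref{lem:easy-high-low} inside each rescaled box $Q$ \emph{to a $\d$-separated subfamily $L[Q]$ of the heavy lines}, for which the rescaled tube concentration is $O(1)$ -- not to all of $L_Q$ with concentration $\M_L(\d\times\d\times 1)$ as in your local estimate. That per-box step is what converts the heaviness threshold into the bound $|\mathbb U[Q]|\lesssim\tau^{-2}\Delta^{-6-\varepsilon}A|P[Q]|/|P|$, and it is the source of the $\tau^{-2}$ which, combined with the $\lambda_0^{5/2}=(\tau A^{-1})^{5/2}$ from the hairbrush, produces the exponent $9/2$ and the $A^{7/2}$. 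Your version of the local estimate (applied to all of $L_Q$) cannot play this role, and you never define the heavy/light decomposition that the shading in your hairbrush step implicitly relies on.

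Two further points where your plan deviates in a way that loses the stated bound. First, you cap the shading union by the trivial volume of $T^*$; the paper instead compares the hairbrush lower bound for $|\mathbb U[T_\Delta]|$ against the box-wise upper bound for $|\mathbb U|$ coming from the local high-low, which is strictly stronger and is what makes the final inequality close. Second, you feed $|L_{T^*}|\sim\M_L(\d^{1/2}\times\d^{1/2}\times 1)$ into the hairbrush; but Lemma \ref{lem:space-brush} must be applied to essentially distinct tubes (this is where the $C_0$-uniformity is used, passing to $\T$ with $|\T|\lesssim C_0|L|/\M_L(\d\times\d\times 1)$), so the relevant count per thick tube is $M=\M_L(\Delta\times\Delta\times1)/\M_L(\d\times\d\times1)$, and the interpolation $\M_L(\Delta\times\Delta\times1)^{1-\alpha}\M_L(\d\times\d\times1)^{\alpha}$ arises not from the exponent $\alpha$ on $|L_{T^*}|$ but from summing $|\T'[T_\Delta]|^{\alpha}$ over thick tubes via concavity with the cap $M$, i.e. $\sum|\T'[T_\Delta]|^{\alpha}\gtrsim M^{\alpha-1}|\T'|$. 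As written, your accounting would not reproduce the $(1-\alpha,\alpha)$ split or the $A$- and $\tau$-powers, which is exactly the obstacle you acknowledge; filling it requires restructuring the argument along the lines above rather than a choice of H\"older exponents.
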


We will apply this estimate in the situations of the following type. Suppose that $P, L$ are $\d$-separated sets of points and lines (say, coming from a point-line configuration $X$ with $d(X) \ge \d$) and that we have $|P|, |L| \approx_\varepsilon \d^{-3}$. Say that we know from Proposition \ref{prop:plane-reduction} that $L$ defines a $(1+\gamma, 2-\gamma, \d^{-\varepsilon})$-Katz--Tao set of $\d$-tubes. Moreover, let us suppose that $L$ is {\em well-spaced}, namely that lines in $L$ are well distributed on scale $\d^{1/2}$, i.e. $\M_L(\d^{1/2}\times \d^{1/2}\times 1) \lessapprox \d^2 |L| \approx \d^{-1}$ holds. Under these assumptions, we have $\alpha = \frac{1}{2}+\frac{\gamma}{6}$ and 
(\ref{eq:second-high-low-improvement}) gives 
\[
|B(\d/2) - B(\d)|^{9/2} \lessapprox_\varepsilon \d^{-5/2} \frac{\M_L(\d^{1/2}\times\d^{1/2}\times 1)^{1-\alpha}  }{|L|} \lessapprox \d^{-\frac{1}{2}+\alpha} = \d^{\frac{\gamma}{6}}
\]
and so we get a polynomial improvement over Lemma \ref{lem:easy-high-low} provided that $\gamma>0$. 

\begin{proof}[Proof of Theorem \ref{thm:small-box-high-low}]
    Using $C_0$-uniformity of $L$ we can find a collection essentially distinct $\d$-tubes $\T$ and a finitely overlapping covering $L = \bigcup_{T\in \T} L[T]$ so that $|\T| \lesssim C_0 \frac{|L|}{\M_L(\d\times \d\times 1)}$ and $1/C_0\lesssim \frac{|L[T]|}{\M_L(\d\times \d\times 1)} \lesssim 1$ for every $T \in \T$. Then $\T$ is a $(t_1, t_2, C_0K)$-Katz--Tao set of $\d$-tubes. 

    Denote $\Delta = \d^{1/2}$. 
    Decompose $P = \bigsqcup_{Q\in \mathcal Q} P[Q]$ where $\mathcal Q=\mathcal Q_\Delta$ is a collection of disjoint $\Delta$-boxes covering $P$. 
    Fix some parameter $\tau > 0$ and for each $\ell \in L$ consider the set $\mathcal Q(\ell)$ of $\Delta$-boxes $Q \in \mathcal Q$ such that
    \begin{equation}\label{eq:difference}
    \langle 1_\ell * (\eta_\d - \eta_{\d/2 }), 1_{P[Q]} \rangle \ge \tau \Delta|P|.
    \end{equation}
    Using the point-wise bound $|1_\ell *\eta_\delta| \lesssim \d^{-2}$, for any $Q\in \mc Q$ intersecting $T_{2\d}(\ell)$ we have crude bounds
    \[
    |\langle 1_\ell * \eta_\d, 1_{P[Q]}\rangle|+|\langle 1_\ell * \eta_{\d/2}, 1_{P[Q]}\rangle|  \lesssim \d^{-2} |T_{2\d}(\ell)\cap Q \cap P| \lesssim \d^{-5/2} \M_P(\d) = A\Delta |P|
    \]
    where we used that $T_{2\d}(\ell) \cap Q$ can be covered by $\sim \d^{-1/2}$ $\d$-balls. 
    So we can estimate 
    \begin{align*}
        \langle 1_\ell * (\eta_\d - \eta_{ \d/2 }), 1_{P} \rangle = \sum_{Q\in \mc Q:~ Q\cap T_{2\d}(\ell) \neq \emptyset } \langle 1_\ell * (\eta_\d - \eta_{ \d/2 }), 1_{P[Q]} \rangle \\\le \sum_{Q\not \in \mc Q(\ell)}\tau\Delta|P| +\sum_{Q\in \mc Q(\ell)} CA \Delta |P| \le C|P|(\tau + |\mc Q(\ell)| A\Delta)
    \end{align*}
    where we estimated the number of $Q\cap T_{2\d}(\ell)\neq \emptyset$ with $Q\not\in \mc Q(\ell)$ by $C\Delta^{-1}$.
    Summing this over all lines $\ell\in L$ gives for some constant $C$
    \begin{equation}\label{eq:l-incidence}
    B(\d) - B(\d/2) \le C \tau + C A |L|^{-1} \sum_{\ell\in L}\Delta |\mathcal Q_\ell|.    
    \end{equation}
    For the sake of contradiction, let us assume that $B(\d) -B(\d/2) \ge 2 C\tau$ holds. From (\ref{eq:l-incidence}) we then get
    \begin{equation}\label{eq:lower-bound-lambda}
    |L|^{-1} \sum_{\ell\in L}\Delta |\mathcal Q_\ell| \ge \tau A^{-1} =: \lambda_0.
    \end{equation}
    For $T \in \T$ let us define $\mathcal {Q}(T) = \bigcup_{\ell \in L[T]} \mathcal Q(\ell)$. 
    By pigeonholing, we can find some dyadic $\lambda \in [\lambda_0, C]$ and a subset $\T' \subset \T$ such that $\left|\mathcal Q(T) \right| \sim \lambda \Delta^{-1}$
    for $T \in \T'$ and $|\T'|\gtrsim \frac{\lambda_0}{\lambda \log(C/\lambda_0)} |\T|$.
    
    Decompose $\T' = \bigsqcup_{T_\Delta\in \T_\Delta} \T'[T_\Delta]$ where $\T_\Delta$ is a collection of essentially distinct $\Delta$-tubes covering $\T'$. Let $L[T_\Delta]$ be the union of $L[T]$ over all $T \in \T'[T_\Delta]$.
    
    Let $\mathbb U$ be a maximal by inclusion set of essentially distinct $2\d\times 2\d\times 4\Delta$-tubes such that for every $U \in \mathbb U$ there exists $T \in \T'$ and $Q\in \mathcal Q(T)$ so that $T \cap 2Q \subset U$. We estimate the size of $\mathbb U$ in two different ways. First, for $T_\Delta \in \T_\Delta$ define $\mathbb U[T_\Delta]$ to be the set of segments $U$ contained in $2T$. Observe that since tubes in $\T_\Delta$ are essentially distinct, the sets $\mathbb U[T_\Delta]$ are finitely-overlapping and so
    \begin{equation}\label{eq:U-lower-bound}
    |\mathbb U| \gtrsim \sum_{\T_\Delta} |\mathbb U[T_\Delta]|.
    \end{equation}
    Second, for $Q \in \mathcal Q$ set $\mathbb U[Q]$ be the set of $U$ contained in $2Q$, then we have an upper bound
    \begin{equation}\label{eq:U-upper-bound}
    |\mathbb U| \lesssim \sum_{\mc Q} |\mathbb U[Q]|.
    \end{equation}

    Fix $T_\Delta\in \T_\Delta$.
    Define a shading $Y=Y_{T_\Delta}$ on $\T'[T_\Delta]$ as follows: for a tube $T$ we set 
    \[
    Y(T) = \bigcup_{Q \in \mathcal Q(T)} T\cap 2Q
    \]
    By the construction of $\T'$, the shading $Y$ has density at least $\lambda$.

    Let $\psi=\psi_{T_\Delta}$ be an affine map which rescales $T_\Delta$ into the unit cube and define $\tilde \T$ to be the set of $\Delta=\d/\Delta$-tubes $\psi(T)$, $T\in \T'[T_\Delta]$. 
    Since $\T'\subset \T$ and $\T$ is $(t_1, t_2, CK)$-Katz--Tao by assumption, it follows that $\tilde \T$ is a $(t_1,t_2, CK)$-Katz--Tao set of $\Delta$-tubes. 



    By Lemma \ref{lem:space-brush} applied to $\tilde \T$ and the rescaled shading $\tilde Y$ (which still has density at least $\lambda$) we obtain
    \[
    \left| \bigcup_{\tilde \T} \tilde Y(\tilde T)  \right| \gtrsim_\epsilon \d^{\epsilon} K^{-\frac{2+t_1}{2t_1+2t_2}} \lambda^{5/2} \Delta^2 |\tilde \T|^{\frac{2+t_1}{2t_1+2t_2}}.
    \]
    Since $\psi$ increases the volume by a factor of $\Delta^{-2}$ we have
    \[
    |\mathbb U[T_\Delta]| (\d \times \d \times \Delta) \gtrsim \left| \bigcup_{\T'[T_\Delta]} Y(T)  \right| \sim \Delta^2 \left| \bigcup_{\tilde \T} \tilde Y(\tilde T)  \right|
    \]
    and so we get a lower bound
    \begin{equation}\label{eq:U-lower-bound-2}
        |\mathbb U[T_\Delta]| \gtrsim_\epsilon \d^{\epsilon} K^{-\frac{2+t_1}{2t_1+2t_2}} \lambda^{5/2} \Delta^{-1} |\T'[T_\Delta]|^{\frac{2+t_1}{2t_1+2t_2}}
    \end{equation}

    We have for every $T_\Delta \in \T_\Delta$ that $|\T'[T_\Delta]| \lesssim \frac{\M_L(\Delta \times \Delta \times 1)}{\M_L(\d\times \d\times 1)} =: M$. So since $\alpha:=\frac{2+t_1}{2t_1+2t_2} \in [0,1]$, we obtain $\sum |\T'[T_\Delta]|^\alpha \gtrsim M^\alpha(|\T'|/M) = M^{\alpha-1} |\T'|$ and so
    \begin{equation}\label{eq:U-lower-bound-3}
    |\mathbb U| \gtrsim \sum_{\T_\Delta} |\mathbb U[T_\Delta]| \gtrsim K^{- \alpha} \lambda^{5/2} \d^{-1/2+2\epsilon} M^{\alpha-1} |\T'|.
    \end{equation}
    
    On the other hand, fix $Q \in \mathcal Q$ and let $\psi=\psi_Q: Q\rightarrow [0,1]^3$ be the affine rescaling map. Let $P^Q = \psi(P[Q])$. Let $L_Q$ to be the set of lines $\ell \in L'$ is such that $Q \in \mathcal Q(\ell)$ and let $L[Q] \subset L_Q$ be a maximal $\sim \d$-separated collection. Note that we have $|\mathbb U[Q]| \lesssim |L[Q]|$. 
    Define $L^Q$ to be the set of lines $\psi(\ell)$ with $\ell \in L[Q]$. Observe that by a change of variables for any $w >0$:
    \begin{align*}
    B(w; P^Q, L^Q) = \left\langle |P^Q|^{-1} 1_{P^Q} * \eta_w, |L^Q|^{-1} \sum_{L^Q} 1_\ell\right\rangle = \Delta^{-2}\left\langle |P[Q]|^{-1} 1_{P[Q]} * \eta_{w\Delta}, |L[Q]|^{-1} \sum_{L[Q]} 1_\ell \right\rangle 
    \end{align*}
    so using (\ref{eq:difference}) we get
    \begin{equation}\label{eq:BDelta-difference}
    B(\Delta; P^Q, L^Q)-B(\Delta/2; P^Q, L^Q) \ge \tau \frac{\Delta^3 |P|}{|P[Q]|}.    
    \end{equation}
    By Lemma \ref{lem:easy-high-low} we can upper bound this difference as 
    \begin{align*}
    |B(\Delta; P^Q, L^Q)-B(\Delta/2; P^Q, L^Q)|^2 \lesssim_{\epsilon'} \Delta^{-6-\epsilon'} \frac{\M_{P^Q}(\Delta)}{|P^Q|} \frac{\M_{L^Q}(\Delta \times \Delta \times 1)}{|L^Q|} \lesssim \\
    \lesssim \Delta^{-6-\epsilon'} \frac{\M_{P}(\d)}{|P[Q]|} |L[Q]|^{-1} \lesssim \Delta^{-\epsilon'} A \frac{|P|}{|P[Q]|} |\mathbb U[Q]|^{-1}. 
    \end{align*}
    since $L[Q]$ is a maximal $\d$-separated collection of lines. We obtain from (\ref{eq:BDelta-difference}) that
    \[
    \tau^2 \frac{\Delta^6 |P|^2}{|P[Q]|^2} \lesssim \Delta^{-\epsilon'} A \frac{|P|}{|P[Q]|} |\mathbb U[Q]|^{-1},
    \]
    \[
    |\mathbb U[Q]| \lesssim \tau^{-2} \Delta^{-6-\epsilon'} |P[Q]| A |P|^{-1}, 
    \]
    so summing over $Q$ and using (\ref{eq:U-upper-bound}) gives
    \[
    |\mathbb U|\lesssim \tau^{-2} \d^{-3-\epsilon'} A.
    \]
    So by (\ref{eq:U-lower-bound-3}) we have 
    \[
    \tau^{-2} \d^{-3-\epsilon'} A \gtrsim |\mathbb U| \gtrsim K^{-\alpha}\lambda^{5/2} \d^{-1/2+2\epsilon} M^{\alpha-1} |\T'|,
    \]
    \begin{equation}\label{eq:lambda-upper-bound}
    \lambda^{5/2} \lesssim K^{\alpha} \tau^{-2} \d^{-5/2-4\varepsilon} M^{1-\alpha} A |\T'|^{-1}    .
    \end{equation}
    Using $\lambda \ge \lambda_0 = \tau A^{-1}$ and $|\T'| \gtrsim \d^{\epsilon} \frac{\lambda_0}{\lambda} |\T|$ we have
    \[
    \lambda^{5/2} |\T'| \gtrsim \d^{\varepsilon} \lambda_0^{5/2} |\T| = \d^{\varepsilon}  \tau^{5/2} A^{-5/2}
    \]
    and so (\ref{eq:lambda-upper-bound}) gives
    \[
    \d^{\varepsilon}  \tau^{5/2} A^{-5/2} |\T| \lesssim K^{\alpha} \tau^{-2} \d^{-5/2-4\varepsilon} M^{1-\alpha} A
    \]
    \[
    \tau^{9/2} \lesssim \d^{-5\varepsilon}K^{\alpha} M^{1-\alpha} A^{7/2} \d^{-5/2}|\T|^{-1}
    \]
    Now recall that $M = \frac{\M_L(\Delta \times \Delta \times 1)}{\M_L(\d\times \d\times 1)}$ and $|\T| \gtrsim \frac{|L|}{C_0\M_L(\d\times \d\times 1)}$. Using this we get
    \[
    \tau^{9/2} \lesssim C_0\d^{-5\varepsilon}\d^{-5/2} K^{\alpha}A^{7/2}\frac{\M_L(\Delta \times \Delta \times 1)^{1-\alpha}\M_L(\d \times \d \times 1)^{\alpha}}{|L|}
    \]
    This matches the right hand side of (\ref{eq:second-high-low-improvement}), up to changing $\varepsilon$. So the difference $B(\d) - B(\d/2)$ is upper bounded by the desired quantity from above. Exactly the same argument shows that $B(\d)- B(\d/2)$ is lower bounded by the same quantity from below, giving us the desired upper bound on $|B(\d/2)-B(\d)|^2$.
\end{proof}

\section{Final preparations}\label{sec:preliminaries}

Recall from Section \ref{sec:intro} that $\Omega_d$ is the set of pairs $(p, \ell)$ where $p \in [0,1]^d$ and $\ell$ is a line passing through $p$. For a line $\ell$ in $\R^d$ we let $\theta(\ell) \in S^{d-1}$ denote a unit vector in the direction of $\ell$ (choosing arbitrarily between the two possibilities). 

Given a configuration $X \subset \Omega_d$, we let $P[X] = \{p, ~(p, \ell)\in X\}$ and $L[X] = \{\ell, ~(p, \ell) \in X\}$ be the (multi-)sets of points and lines in $[0,1]^d$ determined by $X$. Furthermore, we let $\theta[X] = \{\theta(\ell),~(p, \ell) \in X\} \subset S^{d-1}$ be the multiset of directions of lines defined by $X$. 




Recall the definition of the property $\operatorname{PL}_d(\gamma)$ from Section \ref{sec:intro}. In the next proposition we show that if we know $\operatorname{PL}_2(\gamma)$ for some $\gamma>0$ then for any point-line configuration $X \subset \Omega_3$ with $d(X)\ge \d$ the set of $\d$ tubes defined by $X$ satisfies a Katz--Tao axiom. 

\begin{prop}\label{prop:plane-reduction}
    Suppose that $\operatorname{PL}_2(\gamma)$ holds for some $\gamma \in [0,1]$. 

    If $X\subset \Omega_3$ is a point-line configuration such that $d(X) \ge \delta$, then for all $u\le w \in (0,1)$ with $uw \ge \d$ we have:
    \[
    \M_{L[X]}(u \times w\times 1) \lesssim_\varepsilon  \delta^{-3-\varepsilon} u^{1+\gamma} w^{2-\gamma}.
    \]
    In other words, the set of $\delta$-tubes determined by $L[X]$ is $(1+\gamma, 2-\gamma, C_\varepsilon\d^{-\varepsilon})$-Katz--Tao.
\end{prop}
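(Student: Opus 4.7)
The plan is to reduce the bound on $\M_{L[X]}(u\times w\times 1)$ to an application of $\operatorname{PL}_2(\gamma)$, by projecting the 3D slab onto a 2D plane transverse to its long axis and then slicing in the remaining short direction.

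\medskip

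\noindent\textbf{Setup.} After translation and rotation, I would take the slab to be $\Pi=[0,u]\times[0,w]\times[0,1]$ with long axis $e_3$, and set $L'=L[X]\cap\Pi$. Each $\ell\in L'$ has direction $(a,b,c)$ with $|a|\lesssim u$, $|b|\lesssim w$, $|c|\approx 1$, and the sub-configuration $X_\Pi=\{(p(\ell),\ell):\ell\in L'\}\subset X$ still satisfies $d(X_\Pi)\ge\d$.

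\medskip

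\noindent\textbf{Thin-slab base case ($u\le c\d$).} Here the $x$-extent of both points (restricted to the neighborhood) and lines is $O(\d)$, so the projection $\pi:(x,y,z)\mapsto(y,z)$ loses very little distance: a computation with the Pythagorean decomposition
\[
d_{3D}(p,\ell)^2 = d_{2D}(\pi p,\pi\ell)^2 + O(u^2)
\]
shows that each pair in $\pi(X_\Pi)$ has 2D distance $\gtrsim\d$. Next, rescale the strip $[0,w]\times[0,1]$ to $[0,1]^2$ by $(y,z)\mapsto(y/w,z)$. Because the lines are nearly vertical, their perpendicular displacements lie mostly along the $y$-axis, so the rescaled minimum distance becomes $\gtrsim\d/w$. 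Applying $\operatorname{PL}_2(\gamma)$ on each of the $\sim 1/w$ unit squares needed to tile the image of the points in $[0,1/w]\times[0,1]$ gives the bound $|L'|\lesssim_\varepsilon \d^{-2+\gamma-\varepsilon}w^{2-\gamma}$, which matches the proposition in the case $u=O(\d)$.

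\medskip

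\noindent\textbf{General $u$.} I would slice $\Pi$ in the $x$-direction into $u/\d$ sub-slabs $\Pi_k$ of width $\d$, apply the thin-slab argument to the subset of lines with $p(\ell)\in\Pi_k$, and sum. This produces a bound of shape $(u/\d)\cdot \d^{-2+\gamma-\varepsilon}w^{2-\gamma+\varepsilon}$, which has the correct $w$-exponent but only $u$-exponent $1$, short of the claimed $1+\gamma$ by a factor $(u/\d)^\gamma$. The extra $(u/\d)^\gamma$ is obtained by running a \emph{second} application of $\operatorname{PL}_2(\gamma)$ in the transverse $xz$-plane: the projection onto $xz$ also yields a 2D point-line configuration inheriting the $\operatorname{PL}_2(\gamma)$ bound, and combining the two perpendicular 2D estimates (together with the trivial identity $(1+\gamma)+(2-\gamma)=3$ for 3D Katz--Tao) recovers the claimed $\d^{-3-\varepsilon}u^{1+\gamma}w^{2-\gamma}$.

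\medskip

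\noindent\textbf{Main obstacle.} The hardest part is combining the two 2D estimates without losing the $\gamma$-gain in either direction. A naive slicing in one direction always gives the trivial exponent $1$ there; the improvement to $1+\gamma$ requires that the two perpendicular applications of $\operatorname{PL}_2(\gamma)$ interact so that their $\gamma$-gains combine additively. This in turn calls for a careful dyadic decomposition of $L'$ by direction together with a pigeonhole on the 2D positions, and is the essential technical content of the reduction from $\operatorname{PL}_2(\gamma)$ to the $(1+\gamma,2-\gamma)$-Katz--Tao axiom for the line set $L[X]$.
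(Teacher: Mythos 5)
Your thin-slab case $u\le c\delta$ is sound and is essentially the paper's argument at the endpoint $u\sim\delta$: collapse the $\delta$-thin direction, rescale anisotropically, and feed the resulting planar configuration (minimal distance $\gtrsim\delta/w$) into $\operatorname{PL}_2(\gamma)$. The gap is in the passage to general $u$. Slicing $\Pi$ into $u/\delta$ slabs $\Pi_k$ of thickness $\delta$ and applying the thin-slab argument to the lines with $p(\ell)\in\Pi_k$ is not legitimate: those lines are only known to have long intersection with the big prism $\Pi$, so their slope in the $x$-direction can be as large as $\sim u\gg\delta$ and they cross $\Pi_k$ in a segment of length only $\sim\delta/u$. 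Hence the property that made the projection harmless in the thin case fails: for $(p,\ell),(p',\ell')$ with $p,p'\in\Pi_k$, the separation $d(p,\ell')\ge\delta$ may be carried entirely by the $x$-coordinate (at the height of $p$ the line $\ell'$ can have drifted by $\sim u$ in $x$ from $p'$), so after projecting out $x$ the planar configuration need not be $\gtrsim\delta$-separated and $\operatorname{PL}_2(\gamma)$ gives nothing. The proposed second application in the $xz$-plane suffers from the same defect (the 3D separation may live entirely in the $y$-direction), and the step where the two perpendicular 2D estimates are ``combined'' is exactly the part you leave unspecified. Note also that your bookkeeping is reversed: if the per-slab bound were valid, summing would give $\lesssim_\varepsilon (u/\delta)\,\delta^{-2+\gamma-\varepsilon}w^{2-\gamma}=\delta^{-3+\gamma-\varepsilon}u\,w^{2-\gamma}$, which is already at most the claimed $\delta^{-3-\varepsilon}u^{1+\gamma}w^{2-\gamma}$ since $u\ge\delta$; no extra factor $(u/\delta)^\gamma$ would be needed, which signals that the real loss is hidden in the invalid per-slab step rather than in a missing transverse gain.

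The paper repairs exactly this point by shrinking in the long direction instead of only in $x$: by pigeonholing one finds a box $\Pi'\subset\Pi$ homothetic to $\Pi$, of dimensions $c\delta\times c\delta w/u\times c\delta/u$, containing a fraction $\gtrsim(\delta/u)^3$ of the pairs. Because the height of $\Pi'$ is only $\delta/u$, every relevant line drifts by at most $O(\delta)$ in $x$ across $\Pi'$, so after collapsing the $\delta$-side and rescaling $\Pi'$ to the unit square the minimal distance of the planar configuration is $\gtrsim u/w$ (not $\delta/w$). A single application of $\operatorname{PL}_2(\gamma)$ bounds the retained pairs by $\lesssim_\varepsilon(w/u)^{2-\gamma+\varepsilon}$, and undoing the pigeonhole factor $(u/\delta)^3$ gives exactly $\delta^{-3-\varepsilon}u^{1+\gamma}w^{2-\gamma}$. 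So the exponent $1+\gamma$ on $u$ does not come from a second, transverse use of $\operatorname{PL}_2(\gamma)$; it comes from the interplay between the pigeonhole loss $(u/\delta)^3$ and the weaker separation $u/w$ available at the rescaled scale.
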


\begin{proof}
    Let $\Pi \subset \R^3$ be an $u\times w\times 1$ prism and let $X_\Pi$ be the set of pairs $(p,\ell)\in X$ with $p\in \Pi$ and $|\ell \cap \Pi| \ge 1/2$. Let $P_\Pi = P[X_\Pi] \subset \Pi$ be the corresponding set of points. Denote $\alpha = c \delta/u$ for a small constant $c>0$.
    By pigeonhole principle, there exists a $\alpha u \times \alpha w \times \alpha$ box $\Pi'\subset \Pi$ homothetic to $\Pi$ such that 
    \[
    |P_{\Pi'}| =|P_\Pi \cap \Pi'| \gtrsim \alpha^3 |P_\Pi| \gtrsim (\delta/u)^3 |P_\Pi|.
    \]
    Let $X_{\Pi'}$ be the set of pairs $(p, \ell) \in X$ so that $p \in \Pi'$ and $|\ell\cap \Pi| \ge 1/2$. 
    Let $\psi: B' \to [0,1]^2$ be an affine map which collapses the side of length $\alpha u$ and rescales the other two by factors $(\alpha w)^{-1} \sim u/\delta w$ and $\alpha^{-1} \sim u/\delta$. 
    Observe that for $(p, \ell)\neq (p', \ell')\in X_{\Pi'}$ we have
    \begin{equation}\label{eq:psi-image}
    d(\psi(p), \psi(\ell')) \sim (u/\delta w) d(p, \ell') \gtrsim u/w.    
    \end{equation}
    So the point-line configuration $\tilde X = \psi(X_{\Pi'})$ in $[0,1]^2$ satisfies $d(\tilde X) \gtrsim u/w$. 

    We conclude using the property $\operatorname{PL}_2(\gamma)$ that 
    \[
    |X_{\Pi'}| \sim |\tilde X| \lesssim_\varepsilon (u/w)^{-2+\gamma-\varepsilon}.
    \]
    So we get
    \[
    \M_{L[X]}(u\times w\times 1) \le \max_{\Pi}|X_\Pi| \lesssim \alpha^{-3} |X_{\Pi'}| \lesssim  (\delta/u)^{-3} (u/w)^{-2+\gamma-\varepsilon} \lesssim_\varepsilon \delta^{-3-\varepsilon} u^{1+\gamma} w^{2-\gamma}.
    \]
\end{proof}

Proposition \ref{prop:plane-reduction} will be used to estimate the error terms in the high-low estimates from Section \ref{sec:high-low}. In order to prove $\operatorname{PL}_3(\gamma)$ for some $\gamma>0$ it is enough to assume that $\operatorname{PL}_2(\gamma_0)$ holds for some $\gamma_0>0$. So Proposition \ref{prop:main-in-R2} will suffice to prove Theorem \ref{thm:main} but if one were to optimize the value of $\gamma$ one could of course use the stronger statement $\operatorname{PL}_2(1/2)$ given by Theorem \ref{thm:point-line-R2}. 

\subsection{Uniformity}
It will be convenient to assume some nice regularity properties of $X$. The following setup and lemmas are closely related to the framework in \cite{cohen2025lower}. Some technical details are simplified here though (mainly due to the fact that we do not need to rescale into tubes). 
Let $X \subset \Omega_d$ be a point-line configuration in $\R^d$. 
For $u, v, w > 0$ let us define a concentration number 
\[
\M_X(u, v, w) = \max_{(p_0, \ell_0) \in \Omega_d} \# \{ (p, \ell) \in X:~ d(p, p_0) \le u, ~ d(\theta, \theta_0) \le v, ~d(\ell, \ell_0) \le w\},
\]
where $\theta = \theta(\ell)$ and $\theta_0 = \theta(\ell_0)$. For the purposes of this definition it is convenient to define a metric on the set of lines as follows:
\[
d(\ell, \ell') = d(\theta(\ell), \pm\theta(\ell')) + \min_{p\in \ell, p'\in \ell'} d(p, p'), 
\]
where we choose the sign so that this distance is minimized. In what follows we will omit this sign with the convention that $\theta(\ell)$ is always chosen appropriately. From this definition it is clear that we always have $d(\theta(\ell), \theta(\ell')) \le d(\ell, \ell')$ which implies the identity
\[
\M_X(u, v, w) = \M_X(u, \min(v, w), w).
\]

The quantity $\M_X$ generalizes the concentration constants of the point-set $P[X]$ and the line set $L[X]$, namely, we have the approximate formulas:
\begin{align*}
&\M_{P[X]}(u) \sim \M_X(u, 1, 1),\\
&\M_{L[X]}(w\times w\times 1) \sim \M_X(1, 1, w) = \M_X(1, w, w).
\end{align*}

Observe that the numbers $\M_X$ satisfy the following `Lipschitz' property for any $A, B, C\ge 1$:
\begin{equation}\label{eq:MX-lip}
    \M_X(Au, Bv, Cw) \lesssim A^{d} B^{d-1} C^{2d-2} \M_X(u, v, w).
\end{equation}
The proof is analogous to the proof of Proposition \ref{prop:easy-relations-M}.  
For parameters $K>0$ and $\Delta_1>\ldots>\Delta_m$, let us say that a configuration $X \subset \Omega_d$ is $K$-uniform on a sequence of scales $\Delta_1 > \ldots > \Delta_m$ if for all $i, j, k \in \{1, \ldots, m\}$ and for any $(p_0, \ell_0)\in X$ the number of $(p, \ell) \in X$ with $d(p, p_0) \le \Delta_i$, $d(\theta, \theta_0) \le \Delta_j$ and $d(\ell, \ell_0)\le \Delta_k$ is at least $\M_X(\Delta_i, \Delta_j, \Delta_k)/K$.

\begin{lemma}\label{lem:uniform-subset}
    For any $\d$ and $m \ge 1$ there exists some $K \le (C \log 1/\d)^{10m^3}$ such that the following holds. 
    For any $X \subset \Omega$ and any $\Delta_1> \ldots > \Delta_m \ge \d$ there exists $X' \subset X$ of size at least $K^{-1} |X|$ so that $X'$ is $K$-uniform on scales $\Delta_1, \ldots, \Delta_m$.
\end{lemma}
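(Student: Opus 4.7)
The plan is a simultaneous dyadic pigeonholing of the local densities, followed by an iteration to upgrade from uniformity of $N^X$ restricted to the refined set to uniformity of the density of the refined set itself. For each $(p,\ell)\in X$ and each triple $(i,j,k)\in[m]^3$, set
\[
N^X_{i,j,k}(p,\ell)=\#\{(p',\ell')\in X:\,d(p,p')\le\Delta_i,\ d(\theta(\ell),\theta(\ell'))\le\Delta_j,\ d(\ell,\ell')\le\Delta_k\}.
\]
Each $N^X_{i,j,k}(p,\ell)$ is a positive integer at most $|X|\lesssim\delta^{-O(1)}$, so it takes one of $O(\log(1/\delta))$ dyadic values. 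The joint ``dyadic profile'' of $(p,\ell)$ across all $m^3$ triples lies in a set of cardinality $O(\log(1/\delta))^{m^3}$, so a single pigeonhole produces $X_1\subset X$ of size at least $|X|/(C\log(1/\delta))^{m^3}$ on which every $N^X_{i,j,k}$ is constant up to a factor of two, equal to some dyadic value $D_{ijk}$.

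To relate the maximum over $\Omega_d$ in the definition of $\M$ to a maximum taken at points of a refined subset itself, I would enlarge the scale sequence to $\tilde\Delta_1>\cdots>\tilde\Delta_M$ with $M=O(m)$ by adjoining $2\Delta_i$ to each $\Delta_i$ and pigeonhole over the $M^3$ extended triples. A one-step triangle inequality combined with the Lipschitz estimate \eqref{eq:MX-lip} gives, for any nonempty $X'\subset X$,
\[
\M_{X'}(\Delta_i,\Delta_j,\Delta_k)\lesssim\max_{(p^*,\ell^*)\in X'}N^{X'}(p^*,\ell^*;\,2\Delta_i,\,2\Delta_j,\,2\Delta_k),
\]
so the maximum over $\Omega_d$ is now controlled by a density at an actual point of $X'$ at doubled (but still pigeonholed) scales.

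The catch is that one round of the above pigeonholing produces uniformity of $N^X\big|_{X_1}$, whereas the lemma demands uniformity of $N^{X_1}$ itself. I would iterate: pass from $X_s$ to $X_{s+1}$ by re-pigeonholing with respect to $N^{X_s}$, and track the monotone non-increasing potential $\Phi(X')=\sum_{i,j,k}\lfloor\log_2\M_{X'}(\tilde\Delta_i,\tilde\Delta_j,\tilde\Delta_k)\rfloor$. Initially $\Phi(X)\le O(M^3\log(1/\delta))$, and each iteration either yields the desired $K$-uniformity or strictly decreases $\Phi$ by at least one unit, so the process terminates in finitely many steps.

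The main obstacle is forcing the total loss in $|X'|$ to be at most $(C\log(1/\delta))^{O(m^3)}$, rather than the naive $(C\log(1/\delta))^{O(m^6\log(1/\delta))}$ that one would get from iterating the full pigeonholing $O(M^3\log(1/\delta))$ times. The correct bookkeeping is that only the very first full pigeonholing should incur the $(\log(1/\delta))^{M^3}$ cost; in each subsequent round it is enough to discard the (few) pairs violating uniformity at one particular triple, a step which drops $|X_s|$ by at most a factor of two and either terminates or decrements the corresponding coordinate of $\Phi$. Charging the big losses to $\Phi$-decrements (at most $O(M^3\log(1/\delta))$ of them) and the small losses to the at most $\log|X|$ halving events of $|X_s|$ should then yield the claimed bound $K\le(C\log(1/\delta))^{10m^3}$.
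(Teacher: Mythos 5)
The heart of this lemma is exactly the step you flag as ``the main obstacle'', and your bookkeeping does not close it. The dichotomy ``each round either yields $K$-uniformity or strictly decreases $\Phi$'' is false for the popular-profile refinement: already for a single scale triple, let $X_s$ consist of a tight cluster $A$ of $M$ points (mutual distances below the scale) together with one point $y$ having $M$ pairwise far-apart neighbours $z_1,\dots,z_M$, all far from $A$. The counts are $\sim M$ on $A\cup\{y\}$ and $2$ on the $z_i$, so the popular dyadic class is $A\cup\{y\}$; passing to it leaves every $\M$-value (hence $\Phi$) unchanged, yet inside the refined set $y$ has local count $1$ while points of $A$ still have count $\sim M$, so the refined set is not uniform and no potential coordinate dropped. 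Your fallback, ``discard the few pairs violating uniformity, losing at most a factor of two'', is also unjustified: the violators can be the overwhelming majority (in a union of stars every leaf is a violator), and even when they are few, deleting them changes the counts of the survivors and can create new violators, so the halving steps can cascade; note also that ``at most $\log|X|$ halving events'' gives no lower bound on the final size, since $\log_2|X|$ factor-two losses can wipe out the set, while every round in which you must re-run the full profile pigeonholing costs another $(\log)^{m^3}$ factor, of which the target bound $K\le (C\log 1/\d)^{10m^3}$ lets you afford only $O(1)$. This termination/charging scheme is precisely the content that the paper outsources: its proof maps each pair to its nearest point in a $\sim 1$-net for each metric $d_{i,j,k}$ and then invokes the hypergraph regularization lemma (Lemma 3.5 of the cited work of Cohen--Pohoata--Zakharov), which is the combinatorial engine you are in effect trying to reprove, and your iteration does not reprove it.

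A secondary issue: the lemma assumes nothing about $|X|$ in terms of $\d$, so your opening claim $N^X_{i,j,k}\le |X|\lesssim \d^{-O(1)}$ is not available; a naive profile pigeonholing has $O(\log|X|)$ dyadic classes per triple and would make $K$ depend on $|X|$. This part is fixable and is implicit in the net formulation: each $\sim1$-net in $d_{i,j,k}$ has at most $\d^{-O(1)}$ cells, so after discarding the at most half of $X$ lying in cells of size below $|X|$ divided by twice the number of cells, the surviving cell sizes span only $O(\log(1/\d))$ dyadic values. But without a correct decrement/charging argument (or the cited regularization lemma) the main iteration, and hence the stated bound on $K$, does not go through.
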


\begin{proof}
    This lemma is the same as Lemma 3.6 in \cite{cohen2025lower}, so let us only give a brief sketch. For each triple $(\Delta_i, \Delta_j, \Delta_k)$ be consider a metric on the space $\Omega_d$ given by 
    \[
    d_{i,j,k}((p, \theta), (p', \theta')) = \max \left\{\frac{d(p, p')}{\Delta_i}, \frac{d(\theta, \theta')}{\Delta_j}, \frac{d(\ell, \ell')}{\Delta_k} \right\}
    \]
    Note that $\M_X(\Delta_i, \Delta_j,\Delta_k)$ is essentially the maximum number of points of $X$ contained in a unit ball in metric $d_{i, j, k}$. 
    Now we fix a maximal $\sim 1$-net $\mathcal P_{i, j, k}\subset \Omega_d$ in the metric $d_{i, j, k}$ and define a hypergraph $H \subset X \times \prod \mc P_{i, j, k}$ as follows. For each $(p, \ell)\in X$ we put a tuple of the form $((p, \ell), (x_{i, j, k}) )$ in $H$ where $x_{i, j, k}$ is a closest (in metric $d_{i, j,  k}$) element of the net $\mc P_{i, j, k}$ to $(p, \ell)$. Now using a combinatorial lemma (see Lemma 3.5 in \cite{cohen2025lower}) we can pass to a large subgraph $H' \subset H$ which is approximately regular (the degrees of vertices in each component of $H'$ are the same up to a logarithmic factor). We then let $X' \subset X$ be the corresponding set of point-line pairs.
\end{proof}

For $\d>0, K \ge2$, we say that a point-line pair configuration $X$ is $(\d, K)$-uniform if the following two conditions hold:
\begin{itemize}
    \item[(i)] $X$ is $K$-uniform on the sequence of scales $K^{-1} > K^{-2} > \ldots >K^{-m}$ with $m = [\log_K(1/ \d)]$, 
    \item[(ii)] For each $j =1, \ldots, m$, the set of points $P[X]$ can be covered by $K^{-j}$-cubes which are pairwise $C_dK^{-j}$-separated for some fixed constant $C_d$ depending on the dimension. 
\end{itemize}

For any $X$ we can pass to a subset $X' \subset X$ of size $\gtrsim C_d^{-dm}|X|$ which has the property (ii) above. By applying Lemma \ref{lem:uniform-subset} to $X'$ we can then find a subset $X'' \subset X'$ which is $(\d, K)$-uniform and satisfies $|X''| \gtrsim (C_d\log( 1/\d))^{O(\varepsilon^{-3})}  |X|$ provided that $K \ge \d^{-\varepsilon}$. For a $(\d, K)$-uniform configuration $X$ we will typically restrict our attention to scales $w$ of the form $K^{-j}$ for some $j\ge 0$. Note however that due to (\ref{eq:MX-lip}), for any intermediate scales $w \in [K^{-j-1}, K^{-j}]$ we also have uniformity properties, albeit with an error $O(K^{O(1)})$ instead of $K$. 

Let $X$ be a $(\d,K)$-uniform configuration and fix an arbitrary pair $(p_0, \ell_0) \in X$. For $\Delta \in (\d,1)$ of the form $\Delta=K^{-j}$ we can define a rescaled configuration $X_\Delta$ as follows. Let $Q_\Delta$ be a $\Delta$-cube covering $p_0$ which is $C_dK^{-j}$-separated from other cubes covering $P[X]$ (coming from (ii) above). Let $\psi: Q_\Delta\rightarrow [0,1]^d$ be the homothetic rescaling map and define 
\[
X_\Delta = \psi(X \cap Q_\Delta) := \{(\psi(p), \psi(\ell)):~(p, \ell)\in X, ~p\in Q_\Delta\}.
\] 
Note that the definition of $X_\Delta$ depends on the choice of the initial pair $(p_0, \ell_0)$ but this choice will be unimportant due to uniformity of $X$. For two scales $\Delta, \Delta'$ we abuse notation and let $(X_\Delta)_{\Delta'} = X_{\Delta \Delta'}$, i.e. the rescalings of the point-line configuration $X_\Delta$ are compatible with those of $X$. 

\begin{claim}\label{claim:rescaling-stability}
    Suppose that $X$ is $(\d, K)$-uniform for $K\ge K_0(d)$ and let $\Delta = K^{-j} \in (\d, 1)$ then $X_\Delta$ is $(\d/\Delta, K^{C})$-uniform for some integer $C=C(d)\ge 1$. Furthermore, we have estimates
    \[
    \frac{1}{K^{C'}} \M_X(u\Delta, \min(v, w), w\Delta)\lesssim \M_{X_\Delta}(u, v, w) \le \M_X(u\Delta,\min( v,w), w\Delta)
    \]
    valid for $u, v,w$ of the form $K^{-j} \in [\d/\Delta, 1]$.
\end{claim}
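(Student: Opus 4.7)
The plan is bookkeeping, tracking how the three distance quantities entering $\M_X$ transform under the affine homothety $\psi: Q_\Delta \to [0,1]^d$ with scale factor $\Delta^{-1}$. Since $\psi$ is a Euclidean similarity, point distances scale by $\Delta^{-1}$, direction vectors (and hence angular distances) are preserved, and the positional component $\min_{p\in\ell,~p'\in\ell'} d(p,p')$ of the line distance also scales by $\Delta^{-1}$. The full line distance $d(\ell,\ell')$ therefore scales in a mixed way: it is the sum of an angular piece (unchanged) and a positional piece (scaled by $\Delta^{-1}$).

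For the concentration bound, I would prove the upper inequality first. Take the optimal reference pair $(\tilde p_0, \tilde \ell_0) \in \Omega_d$ for $\M_{X_\Delta}(u,v,w)$, pull it back via $\psi^{-1}$ to a pair $(p_0,\ell_0)$ with $p_0 \in Q_\Delta$, and verify that every contributing $(\tilde p, \tilde \ell)=(\psi p, \psi \ell)$ corresponds to $(p,\ell)\in X$ with $d(p,p_0)\le u\Delta$, $d(\theta,\theta_0)\le v$, and positional line distance at most $w\Delta$. The identity $\M_X(u,v,w)=\M_X(u,\min(v,w),w)$ then reduces the angular argument to $\min(v,w)$, and the Lipschitz bound (\ref{eq:MX-lip}) absorbs the constant-factor mismatch between the line-distance bound $\min(v,w)+w\Delta$ that falls out of the rescaling and the advertised $w\Delta$. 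For the lower inequality, take the optimal $(p_0,\ell_0)$ for $\M_X(u\Delta,\min(v,w),w\Delta)$; if $p_0 \notin Q_\Delta$, use the covering property (ii) of $X$ and pigeonhole to replace it by a nearby pair with point inside $Q_\Delta$ at the cost of a $K^{O(1)}$ factor, then push forward by $\psi$ to obtain a witness for $\M_{X_\Delta}(u,v,w)$.

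For the $(\d/\Delta, K^C)$-uniformity of $X_\Delta$, property (ii) transfers directly: the $K^{-j}\Delta$-cubes from the covering of $P[X]\cap Q_\Delta$ map under $\psi$ to pairwise well-separated $K^{-j}$-cubes covering $P[X_\Delta]$, for $j = 1,\ldots,\lceil \log_K(\Delta/\d)\rceil$. Property (i) follows by combining the concentration estimate above with the $K$-uniformity of $X$: for any triple of scales $(K^{-i},K^{-j},K^{-k})$ in $X_\Delta$, $\M_{X_\Delta}(K^{-i},K^{-j},K^{-k})$ is comparable within $K^{O(1)}$ to $\M_X(K^{-i}\Delta, \min(K^{-j},K^{-k}), K^{-k}\Delta)$, and the $K$-uniformity of $X$ at the latter scales then yields uniformity of $X_\Delta$ at the former, with a constant $K^C$, $C=C(d)$, absorbing the finitely many $K^{O(1)}$ losses.

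The main technical wrinkle throughout is the mixed scaling of $d(\ell,\ell_0)$: since it sums an angular piece (invariant under $\psi$) and a positional piece (scaled by $\Delta^{-1}$), it does not rescale cleanly. This forces one to repeatedly invoke both the identity $\M_X(u,v,w)=\M_X(u,\min(v,w),w)$ to normalize the angular parameter and the Lipschitz property (\ref{eq:MX-lip}) to absorb constant-factor gaps; their combined cost is what produces the $K^{C'}$ loss in the lower concentration bound and the $K^C$ constant in the uniformity conclusion.
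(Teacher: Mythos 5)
Your scaling relations (point distances scale by $\Delta^{-1}$, directions are preserved, the positional part of the line metric scales by $\Delta^{-1}$) match the paper, but there is a genuine gap in your proof of the lower concentration bound, which is the heart of the claim. You propose to take the optimal reference pair $(p_0,\ell_0)$ for $\M_X(u\Delta,\min(v,w),w\Delta)$ and, if $p_0\notin Q_\Delta$, to relocate it into $Q_\Delta$ using the covering property (ii) plus pigeonholing. This step would fail: the pairs witnessing $\M_X(u\Delta,\min(v,w),w\Delta)$ all have points within $u\Delta$ of $p_0$, which may sit in a cube far from $Q_\Delta$; those pairs are not in $X\cap Q_\Delta$, so they do not push forward to $X_\Delta$ at all, and property (ii) is merely a covering/separation statement about $P[X]$ — it gives no comparison between the amount of concentration in different cubes, so no pigeonhole can move the extremal configuration into $Q_\Delta$. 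Transferring near-extremal concentration into the specific cube $Q_\Delta$ is exactly what the $K$-uniformity hypothesis (i) of $X$ is for, and that is how the paper argues: it fixes an arbitrary $(p_1,\ell_1)\in X\cap Q_\Delta$, notes that by the separation in (ii) every $(p_2,\ell_2)\in X$ with $d(p_1,p_2)\le \Delta u$, $d(\theta_1,\theta_2)\le \min(v,w/2)$, $d(\ell_1,\ell_2)\le \Delta w/2$ lies in $Q_\Delta$ and (with the halved scales, so that the angular plus rescaled positional sum stays below $w$) pushes forward to a pair counted by $\M_{X_\Delta}(u,v,w)$, and then invokes $K$-uniformity of $X$ at the rounded scales together with (\ref{eq:MX-lip}) to lower bound this count by $K^{-O(1)}\M_X(\Delta u,\min(v,w),\Delta w)$. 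Since this holds at every pair of $X_\Delta$, it yields both the displayed lower estimate and the uniformity of $X_\Delta$ in one stroke; in your outline, uniformity of $X$ only enters afterwards, so the displayed lower bound itself is left without a valid proof.

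A secondary issue is your treatment of the upper bound: the mismatch you correctly identify between the line-distance bound $\min(v,w)+w\Delta$ produced by rescaling and the target $w\Delta$ is not a constant-factor mismatch — the ratio can be as large as $\sim \Delta^{-1}$ (e.g.\ when $v\ge w$), so invoking (\ref{eq:MX-lip}) to absorb it costs a factor $\Delta^{-O(1)}$ rather than $O(1)$ or $K^{O(1)}$, which is not affordable. (The paper's own proof of this half of the estimate is a one-line assertion from the scaling relations, so it is terse on the same point, but your proposed repair does not close the gap.)
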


\begin{proof}
    Clearly, we have $d(\psi(p), \psi(p')) = \Delta^{-1} d(p,p')$ for $p, p'\in \R^d$ and for lines $\ell, \ell'\subset \R^d$ the metric we introduced scales by $\psi$ as
    \[
    d(\psi(\ell), \psi(\ell')) = d(\theta(\ell), \theta(\ell')) + \min_{p\in \ell, p'\in \ell'} d(\psi(p), \psi(p')) = d(\theta(\ell), \theta(\ell')) + \Delta^{-1}\min_{p\in \ell, p'\in \ell'} d(p, p').
    \]
    Using these relations implies an upper bound
    \[
    \M_{X_\Delta}(u, v, w) \le \M_{X}(u\Delta, \min(v, w), w\Delta).
    \]
    On the other hand, if $(p_1, \ell_1) \in X\cap Q_\Delta$ then by the separation condition (ii), points $(p, \ell) \in X \setminus (X\cap Q_\Delta)$ will not contribute to the number of $(p_2, \ell_2)\in X$ so that $d(p_1, p_2) \le \Delta u$, $d(\theta_1, \theta_2) \le v$, $d(\ell_1, \ell_2) \le \Delta w$. Under these assumptions, we have $d(\psi(\ell_1), \psi(\ell_2)) \le v+w$ and so for $(\tilde p_1, \tilde \ell_1) = \psi(p_1,\ell_1)$ we get
    \begin{align*}
    \#\{ (\tilde p_2, \tilde \ell_2) \in X_\Delta:~ d(\tilde p_1,\tilde p_2) \le u, ~d(\tilde \theta_1, \tilde \theta_2) \le v, ~d(\tilde \ell_1, \tilde \ell_2) \le w \}\ge \\\ge \#\{ (p_2,\ell_2)\in X:~ d(p_1,p_2)\le  \Delta u,~ d(\theta_1, \theta_2) \le \min(v, \frac{w}{2}),~ d(\ell_1, \ell_2) \le \frac{\Delta w}{2} \}    
    \end{align*}
    So by $(\d, K)$ uniformity of $X$ for $K\ge 2$ this can be lower bounded by 
    \[
    \frac{1}{K} \M_X(\Delta u, \min(v, K^{-1}w), K^{-1} \Delta w) \gtrsim K^{-O(1)} \M_X(\Delta u, \min(v, w), \Delta w)
    \]
    where the latter follows by (\ref{eq:MX-lip}). So we conclude that $X_\Delta$ is $(\d/\Delta, K^{C})$-uniform for some (integer) constant $C$ depending on dimension (provided that $K$ is sufficiently large).  
\end{proof}

For $X \subset \Omega_d$, denote by $\theta[X] \subset S^{d-1}$ the multiset of directions determined by $X$. For a set $A$ in a metric space and $\d>0$ we let $|A|_\d$ denote the $\d$-covering number of $A$, i.e. the minimal number of $\d$-balls required to cover $A$. For uniform sets $X$, the covering numbers of related objects $P[X]$, $L[X]$, $\theta[X]$ can be determined using the concentration numbers $\M_X$:

\begin{prop}\label{prop:covering-numbers}
    Let $X \subset \Omega_d$ be a $(\d,K)$-uniform configuration and let $w = K^{-j}\in (\d, 1)$. Then we have
    \begin{align*}
        \frac{|X|}{\M_X(w, 1, 1)}\lesssim |P[X]|_w \lesssim K \frac{|X|}{\M_X(w, 1, 1)}\\
        \frac{|X|}{\M_X(1, 1, w)}\lesssim |L[X]|_w \lesssim K \frac{|X|}{\M_X(1, 1, w)}\\
        \frac{|X|}{\M_X(1, w, 1)}\lesssim |\theta[X]|_w \lesssim K \frac{|X|}{\M_X(1, w, 1)}
    \end{align*}
\end{prop}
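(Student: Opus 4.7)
The plan is to prove all three pairs of inequalities with the same two-step template: a direct pigeonhole for the lower bound, paired with a maximal-separated-set argument powered by $(\d,K)$-uniformity for the upper bound. Both halves ultimately say the same thing from opposite directions, namely that the covering number of the projection is comparable to $|X|$ divided by the maximum multiplicity of that projection.

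For the lower bounds, fix a $w$-cover of $P[X]$ realizing $|P[X]|_w$. Every pair $(p,\ell)\in X$ has its point $p$ lying in some cover ball $B(p_0,w)$, while by the very definition of the concentration number, any such ball contains at most $\M_X(w,1,1)$ pairs from $X$. Summing over the cover gives $|X|\le|P[X]|_w\cdot\M_X(w,1,1)$, which is the desired lower bound. The same counting against a minimal $w$-cover of $L[X]$ in the line metric (cap $\M_X(1,1,w)$) and of $\theta[X]$ in the direction metric on $S^{d-1}$ (cap $\M_X(1,w,1)$) yields the remaining two lower inequalities.

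For the upper bounds, select a maximal $3w$-separated subset $P'\subset P[X]$ in the Euclidean metric. By maximality the $3w$-balls around $P'$ cover $P[X]$, and since $\R^d$ is doubling we obtain $|P[X]|_w\lesssim_d|P'|$. For each $p'\in P'$ fix an arbitrary witness pair $(p',\ell_{p'})\in X$. Applying $(\d,K)$-uniformity at the scale $w=K^{-j}$ gives
\[
\#\{(p,\ell)\in X:\ d(p,p')\le w\}\ \ge\ \M_X(w,1,1)/K,
\]
and the $3w$-separation of $P'$ means the $w$-balls around distinct points of $P'$ are pairwise disjoint, so the contributions add without overcount:
\[
|X|\ \ge\ |P'|\cdot \M_X(w,1,1)/K.
\]
Rearranging yields $|P'|\le K|X|/\M_X(w,1,1)$, and combining with $|P[X]|_w\lesssim_d|P'|$ gives the desired upper bound. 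The bounds for $|L[X]|_w$ and $|\theta[X]|_w$ come from the identical scheme, taking a maximal $3w$-separated subset in the line metric on $\mc A_{d,1}$ (applying uniformity at the triple $(1,1,w)$) and in the spherical metric on $S^{d-1}$ (at the triple $(1,w,1)$); both ambient spaces are doubling with dimension-dependent constants, so the packing-to-covering conversion is uniform.

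No step here is genuinely delicate. The only points to keep track of are the doubling constants that relate $w$-coverings to $3w$-separated subsets (absorbed into $\lesssim$), and the convention that uniformity at the coarse scale $1$ is vacuous: the constraints $d(\theta,\theta_0)\le 1$ and $d(\ell,\ell_0)\le 1$ are automatic on our bounded configurations, so reading the $(\d,K)$-uniformity definition at the triples $(w,1,1)$, $(1,1,w)$, $(1,w,1)$ is legitimate.
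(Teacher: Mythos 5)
Your proof is correct and follows essentially the same route as the paper: the lower bound by noting each $w$-ball holds at most $\M_X(\cdot)$ pairs, and the upper bound by taking a maximal separated net, invoking $(\d,K)$-uniformity at each net point, and summing over the (essentially disjoint) balls. The only differences are cosmetic ($3w$-separation with disjoint balls and a doubling comparison, versus the paper's $w/2$-separated net with boundedly overlapping balls), and your appeal to uniformity with the coarse slots set to $1$ involves the same harmless $K^{O(1)}$-level imprecision the paper itself makes.
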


\begin{proof}
    Let us prove the first line, the others are analogous. First, it is clear that $|P[X]|_w \gtrsim \frac{|X|}{\M_X(w, 1, 1)}$ since any $w$ ball contains at most $C \M_X(w, 1, 1)$ points from $P[X]$. 
    Let $\mc P \subset P[X]$ be a maximal by inclusion $w/2$-separated subset of points. Clearly, $w$-balls around $\mc P$ cover $P[X]$. Let $w' = K^{-j'}$ be a scale so that $w' \le cw$ for a small constant $c$. Then for each $p \in \mc P$ by uniformity we have 
    \[
    |B_d(p, w/2) \cap P[X]| \ge \frac{1}{K} \M_{X}(w', 1, 1) \gtrsim \frac{1}{K} \M_X(w, 1, 1).
    \]
    So it follows that $|\mc P| \lesssim K \frac{|X|}{\M_X(w, 1, 1)}$, concluding the argument.
\end{proof}

To lower bound the normalized incidence count $B(w) = \frac{I(w; P[X], L[X])}{w^{d-1} |X|^2}$ we can use the following `initial estimate'. Initial estimates like this were heavily used in arguments in \cite{cohen2023new} and \cite{cohen2025lower}.

\begin{prop}\label{prop:initial-estimate}
    Let $X \subset \Omega_d$ be a $(\d,K)$-uniform configuration and let $w = K^{-j}\in (C\d^{1/2}, 1)$. Then we have 
    \begin{equation}\label{eq:initial-estimate-general}
    B(w; P[X], L[X]) \gtrsim K^{-O(1)} \frac{|\theta[X_w]|_w}{w^{d-1}|L[X]|_w} \gtrsim K^{-O(1)} \frac{|\theta[X_w]|_w}{|\theta[X]|_w}.    
    \end{equation}
\end{prop}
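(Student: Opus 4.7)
The plan is to lower bound $I(w;P[X],L[X])$ directly by a counting argument, using the $(\delta,K)$-uniformity of $X$ and the well-separated cover of $P[X]$ by $w$-cubes guaranteed by condition (ii). Recall that each pair $(p',\ell)\in P[X]\times L[X]$ with $d(p',\ell)\lesssim w$ contributes $\gtrsim 1$ to $I(w;P[X],L[X])$. Let $\{Q_w\}$ be the collection of $C_d w$-separated $w$-cubes covering $P[X]$. For each occupied cube $Q_w$ I will count approximate incidences of the form $(p',\ell)$ with $p'\in P[X]\cap Q_w$ and $\ell\in L[X]$ passing through a fixed constant dilate $CQ_w$; since the $Q_w$'s partition $P[X]$, these contributions across different cubes correspond to disjoint pairs.

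By uniformity at scale $w$ one has $|P[X]\cap Q_w|\gtrsim K^{-O(1)}|X|/|P[X]|_w$ for every occupied $Q_w$. To count lines in $L[X]$ passing through $CQ_w$, fix a root pair $(p_0,\ell_0)\in X$ with $p_0\in Q_w$ and consider the rescaled configuration $X_w$. For each direction cluster $\theta\in\theta[X_w]$ at resolution $w$, I select a representative pair $(p_\theta,\ell_\theta)\in X$ with $p_\theta\in Q_w$ and $\theta(\ell_\theta)$ close to $\theta$; uniformity of $X$ at the scale triple $(1,1,w)$ then yields at least $K^{-O(1)}\M_X(1,1,w)$ pairs $(p,\ell)\in X$ with $d(\ell,\ell_\theta)\le w$. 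Each such $\ell$ lies in a $w$-tube around $\ell_\theta$ and therefore meets $CQ_w$, and since distinct direction clusters produce lines with $w$-separated directions, summing over a $w$-net of $\theta[X_w]$ gives at least $K^{-O(1)}|\theta[X_w]|_w\cdot\M_X(1,1,w)$ distinct lines in $L[X]$ crossing $CQ_w$.

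Combining these two estimates and summing over the $\sim|P[X]|_w$ occupied cubes,
\begin{equation*}
I(w;P[X],L[X])\gtrsim K^{-O(1)}\,|P[X]|_w\cdot\frac{|X|}{|P[X]|_w}\cdot|\theta[X_w]|_w\cdot\M_X(1,1,w)\sim K^{-O(1)}\frac{|X|^2\,|\theta[X_w]|_w}{|L[X]|_w},
\end{equation*}
where in the last step I apply Proposition \ref{prop:covering-numbers} to rewrite $\M_X(1,1,w)\sim |X|/|L[X]|_w$. Dividing by $w^{d-1}|X|^2$ yields the first inequality in \eqref{eq:initial-estimate-general}. For the second inequality, any $w$-net of $\theta[X]\subset S^{d-1}$ lifts to a $w$-cover of $L[X]\subset\mc A_{d,1}$ by appending at most $O(w^{-(d-1)})$ position translates per direction, so $|L[X]|_w\lesssim w^{-(d-1)}|\theta[X]|_w$, which rearranges to the desired inequality.

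The main technical point will be bookkeeping around the uniformity estimates. In particular, the identity $|\theta[X_w]|_w\sim K^{-O(1)}|X_w|/\M_{X_w}(1,w,1)$ from Proposition \ref{prop:covering-numbers} applied to $X_w$ requires $w$ to lie above the finest scale of the rescaled configuration, which by Claim \ref{claim:rescaling-stability} is $\sim\delta/w$; the hypothesis $w>C\delta^{1/2}$ is used precisely to ensure this. The $K^{-O(1)}$ losses accumulated across the various uniformity applications and the constant enlargement $C$ are benign and get absorbed into the final constant.
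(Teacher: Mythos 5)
Your overall strategy is sound and in fact close in spirit to the paper's own proof: the paper also counts, in effect, a ``bush'' of $\sim K^{-O(1)}\M_X(1,1,w)$ nearby lines for each of $\sim K^{-O(1)}\M_X(w,1,1)$ points per $w$-cube, only it organizes this as a count of triples $\Lambda=\{((p_0,\ell_0),(p_1,\ell_1),(p_2,\ell_2)):d(p_0,p_1)\le w',\,d(\ell_1,\ell_2)\le w'\}$ and then divides by the multiplicity $\M_X(w,1,w)$, which after Claim \ref{claim:rescaling-stability} and Proposition \ref{prop:covering-numbers} is exactly what produces the factor $|\theta[X_w]|_w$. However, as written your argument has a genuine gap at the step where, for an \emph{arbitrary} occupied cube $Q_w$, you ``select a representative pair $(p_\theta,\ell_\theta)\in X$ with $p_\theta\in Q_w$'' for \emph{each} direction cluster $\theta\in\theta[X_w]$. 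The multiset $\theta[X_w]$ is defined from the rescaling into one fixed cube (the one containing the chosen root pair), and uniformity does not give you that every direction cluster appearing there is also represented in every other cube; the direction sets attached to different cubes can be entirely different. What is true, and what your count actually needs, is only that every occupied cube spans $\gtrsim K^{-O(1)}|\theta[X_w]|_w$ direction clusters at scale $w$. This does follow from uniformity, but it requires an argument: pairs in a fixed cube whose directions lie in a common $w$-ball are within $O(w)$ of each other in all three coordinates, hence number at most $\M_X(O(w),O(w),O(w))\lesssim \M_X(w,w,w)$, so the number of clusters in any occupied cube is $\gtrsim K^{-O(1)}\M_X(w,1,1)/\M_X(w,w,w)$, and by Claim \ref{claim:rescaling-stability} together with Proposition \ref{prop:covering-numbers} this quotient is comparable (up to $K^{O(1)}$) to $|\theta[X_w]|_w$. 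With this substitution your per-cube count, and hence the final bound, goes through; without it the selection step is simply unjustified.

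A secondary point you should not wave away: the incidences you produce are at distance $O(Cw)$ from the points of $Q_w$ (cube diameter plus tube radius), so as stated they lower bound $I(Cw;\cdot)$ rather than $I(w;\cdot)$, and for the smoothed counts $B(Cw)$ and $B(w)$ are not directly comparable since $\varphi$ is supported in $[-2,2]$. The standard repair, which the paper carries out, is to run the construction at the largest admissible scale $w'=K^{-j'}\le c'w$ (shrinking the cubes and the line-neighbourhoods so that all produced pairs satisfy $d(p,\ell)\le cw$) and then convert the $\M_X(w',\cdot,\cdot)$ quantities back to scale $w$ using (\ref{eq:MX-lip}) at the cost of further $K^{O(1)}$ factors. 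With these two repairs your proof is correct and is essentially a per-cube reorganization of the paper's triple count; it does not yield anything weaker or stronger.
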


\begin{proof}
    Recall that $I(w;P, L)$ is defined as $\sum_{p\in P, \ell\in L} \varphi(w^{-1} d(p, \ell))$ for some smooth symmetric bump function $\varphi$. We also know that this function satisfies $\varphi(t) \ge c$ for all $t\in [-c,c]$ for some constant $c$ depending on $d$. So we have a lower bound
    \begin{equation}\label{eq:incidence-lower-bound}
    I(w;P, L) \ge c\#\{ p\in P, \ell\in L:~ d(p, \ell) \le cw \}.    
    \end{equation}
    Let $w' =K^{-j'}$ be the largest scale such that $w'\le c'w$ for some constant $c'$. Consider the following set
    \[
    \Lambda = \{ ((p_0, \ell_0), (p_1, \ell_1), (p_2, \ell_2)) \in X\times X\times X:~ d(p_0, p_1) \le w', ~ d(\ell_1, \ell_2) \le w' \}.
    \]
    By uniformity, we have
    \[
    |\Lambda| \ge K^{-2} |X| \M_X(w', 1, 1) \M_X(1, 1, w') \gtrsim K^{-2} \frac{|X|^3}{|P[X]|_w |L[X]|_w}.
    \]
    On the other hand, for $((p_0, \ell_0), (p_1, \ell_1), (p_2, \ell_2))\in \Lambda$ we have $d(p_0, \ell_2) \le d(p_0, p_1) + d(p_1, \ell_2) \le d(p_0, p_1) + O(d(\ell_1, \ell_2)) \le O(w')$, where the implied constant depends on the metric we choose on $\mc A_{d,1}$. So if we choose the constant $c'$ so that $O(w') \le cw$, then $(p_0, \ell_2)$ gives a contribution to (\ref{eq:incidence-lower-bound}). Now if we fix $(p_0, \ell_0)$ and $(p_2, \ell_2)$ then the number of $(p_1,\ell_1)$ so that $((p_0, \ell_0), (p_1, \ell_1), (p_2, \ell_2))\in \Lambda$ can be upper bounded by
    \begin{align*}
    \#\{(p_1, \ell_1)\in X:~ d(p_0, p_1) \le w', d(\ell_1, \ell_2)\le w'\} \\\le \#\{(p_1, \ell_1)\in X:~ d(p_0, p_1) \le w', d(\ell_1, \tilde\ell_2)\le Cw'\}\\ \le \M_X(w',1, Cw') \le \M_X(w, 1, w)    
    \end{align*}
    where $\tilde \ell_2$ is a line passing through $p_0$, parallel to $\ell_2$. So by choosing $Cw'\le w$ we get
    \[
    I(w; P[X],  L[X]) \gtrsim \frac{|\Lambda|}{\M_X(w, 1, w)}.
    \]
    On the other hand, since $d(\ell, \ell') \le w$ implies $d(\theta(\ell), \theta(\ell'))\le w$ we have
    \[
    \M_X(w, 1, w)= \M_X(w, w, w) \lesssim K^{C'} \M_{X_w}(1, w, 1)
    \]
    where the last inequality follows from Claim \ref{claim:rescaling-stability}. Thus, using Proposition \ref{prop:covering-numbers}, we get
    \[
    I(w; P[X], L[X]) \gtrsim K^{-2-C'} \frac{|\Lambda|}{\M_{X_w}(1, w, 1)} \gtrsim K^{-5-C'}\frac{|X|^3 }{|P[X]|_w |L[X]|_w} \frac{|\theta[X_w]|_w}{|X_w|}
    \]
    we have $|P[X]|_w |X_w| \lesssim K |X|$, so after simplifying get:
    \[
    B(w) \gtrsim K^{-O(1)} \frac{|\theta[X_w]|_w}{w^{d-1}|L[X]|_w}.
    \]
    This gives the first inequality in (\ref{eq:initial-estimate-general}). The second inequality follows from the general fact that $|L|_w \lesssim |\theta(L)|_w w^{1-d}$. 
\end{proof}

\begin{prop}\label{prop:double-counting}
    Suppose that $X$ is $(\d, K)$-uniform and $w = K^{-j} \in (\d^{1/2},1)$. Then we have
    \[
    |L[X]|_w \gtrsim K^{-O(1)} w |\theta[X_w]|_w |P[X]|_w.
    \]
\end{prop}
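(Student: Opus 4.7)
The plan is to reduce the desired inequality to a single estimate on the concentration numbers $\M_X$, and then prove that estimate by pigeonholing over a $w$-neighborhood of a line.

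First, I translate all three covering numbers into $\M_X$ data. By Proposition \ref{prop:covering-numbers} applied to $X$, one has $|L[X]|_w \sim |X|/\M_X(1,1,w)$ and $|P[X]|_w \sim |X|/\M_X(w,1,1)$ up to $K^{O(1)}$ factors. For the third quantity, $(\d, K)$-uniformity at scale $w$ forces $|X_w| \sim \M_X(w,1,1)$ (since $X_w$ is the fiber of $X$ over a $w$-cube centered at the base-point $p_0$), while Claim \ref{claim:rescaling-stability} combined with the identity $\M_X(u,v,w) = \M_X(u,\min(v,w),w)$ yields $\M_{X_w}(1,w,1) \sim \M_X(w,\min(w,1),w) = \M_X(w,w,w)$, again up to $K^{O(1)}$. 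Applying Proposition \ref{prop:covering-numbers} to $X_w$ then gives $|\theta[X_w]|_w \sim \M_X(w,1,1)/\M_X(w,w,w)$. Substituting these three equivalences into the target inequality $|L[X]|_w \gtrsim K^{-O(1)} w\, |\theta[X_w]|_w\, |P[X]|_w$ and cancelling the common factor $|X|\,\M_X(w,1,1)^{-1}$ from both sides, it suffices to prove
\[
\M_X(w,w,w) \gtrsim K^{-O(1)}\, w\, \M_X(1,1,w).
\]

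To prove this, fix a pair $(p^*, \ell^*) \in \Omega_d$ realizing $M := \M_X(1,1,w)$, so that there are $M$ pairs $(p_i, \ell_i) \in X$ with $d(\ell_i, \ell^*) \le w$. The definition of the metric on lines forces $d(\theta_i, \theta^*) \le w$ for each $i$; combined with the fact that $\ell_i$ passes within $w$ of $\ell^*$ at some point and both have length $O(1)$ inside the unit cube, every $p_i \in \ell_i \cap [0,1]^d$ lies within distance $O(w)$ of $\ell^*$. Thus the $p_i$ are contained in an $O(w) \times \cdots \times O(w) \times 1$ tube around $\ell^*$, which can be covered by $\lesssim w^{-1}$ balls of radius $w$. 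By pigeonhole, some such ball $B$ contains at least $\gtrsim wM$ of the points $p_i$. Choosing $p_0 \in \ell^*$ within $O(w)$ of the center of $B$ and setting $\ell_0 := \ell^*$, the pair $(p_0, \ell_0) \in \Omega_d$ witnesses $\M_X(O(w), w, w) \gtrsim wM$, and the Lipschitz property \eqref{eq:MX-lip} converts this into $\M_X(w,w,w) \gtrsim wM$, as required.

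The main obstacle is purely bookkeeping: each application of Proposition \ref{prop:covering-numbers} and Claim \ref{claim:rescaling-stability} introduces a polynomial $K^{O(1)}$ loss, and one has to check that all of these losses fit inside the single $K^{-O(1)}$ factor on the right of the statement. The hypothesis $w \in (\d^{1/2}, 1)$ plays no active role in the argument above; it appears to be inherited from the way this proposition will be invoked in Section \ref{sec:main-proof}.
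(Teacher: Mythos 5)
Your proof is correct (up to the usual $K^{O(1)}$ bookkeeping), but it takes a genuinely different route from the paper. The paper gets the proposition almost immediately from the initial estimate: the trivial bound $I(w;P,L)\lesssim w^{-1}|L|\M_P(w)$ gives $B(w)\lesssim w^{-d}\M_{P[X]}(w)/|P[X]|$, and comparing this with the lower bound $B(w)\gtrsim K^{-O(1)}|\theta[X_w]|_w/(w^{d-1}|L[X]|_w)$ from Proposition \ref{prop:initial-estimate}, together with $\M_{P[X]}(w)/|P[X]|\lesssim K/|P[X]|_w$, yields the claim after rearranging. You bypass $B(w)$ and Proposition \ref{prop:initial-estimate} entirely: you translate all three covering numbers into concentration numbers via Proposition \ref{prop:covering-numbers} and Claim \ref{claim:rescaling-stability} (and indeed $|X_w|\sim_K \M_X(w,1,1)$ and $\M_{X_w}(1,w,1)\sim_K \M_X(w,w,w)$, with all inequalities pointing in the useful direction), reduce the statement to $\M_X(w,w,w)\gtrsim K^{-O(1)}\,w\,\M_X(1,1,w)$, and prove that by covering the heavy $w$-neighbourhood of a line by $\sim w^{-1}$ balls of radius $w$ and pigeonholing. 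Morally both arguments are the same double counting; yours is more self-contained and elementary, while the paper's is shorter because Proposition \ref{prop:initial-estimate} is already available and carries the nontrivial geometric content.

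Two caveats. First, your key geometric step --- that $p_i\in\ell_i\cap[0,1]^d$ and $d(\ell_i,\ell^*)\le w$ force $d(p_i,\ell^*)\lesssim w$ --- does not follow from the literal formula $d(\ell,\ell')=d(\theta(\ell),\theta(\ell'))+\min_{p\in\ell,\,p'\in\ell'}d(p,p')$: two nearly parallel lines can almost meet far outside the unit ball while staying distance $\sim 1$ apart inside it. The paper makes exactly the same move inside the proof of Proposition \ref{prop:initial-estimate} (the step $d(p_1,\ell_2)\le O(d(\ell_1,\ell_2))$, with the constant ``depending on the metric''), so this reflects the intended metric on $\mc A_{3,1}$ (angle plus distance of the segments inside a fixed ball) rather than a gap specific to your argument, but you are leaning on it and should say so. Second, your closing remark that the hypothesis $w\in(\d^{1/2},1)$ plays no active role is not accurate: you apply Proposition \ref{prop:covering-numbers} and Claim \ref{claim:rescaling-stability} to $X_w$, which is only $(\d/w,K^{O(1)})$-uniform, at the scale $w$, and this requires $w\ge\d/w$, i.e.\ precisely $w\ge\d^{1/2}$ --- the same place the restriction enters the paper's proof through Proposition \ref{prop:initial-estimate}.
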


\begin{proof}
    Since a $w$-tube can be covered by $\sim w^{-1}$ $w$-balls, we have a trivial upper bound
    \[
    I(w; P, L) \lesssim w^{-1}|L| \M_P(w) 
    \]
    and so $B(w) \lesssim w^{-d} \frac{\M_P(w)}{|P|}$. Comparing to (\ref{eq:initial-estimate-general}) we get
    \[
    K^{-O(1)} \frac{|\theta[X_w]|_w}{w^{d-1}|L[X]|_w}\lesssim w^{-d} \frac{\M_{P[X]}(w)}{|P[X]|} \lesssim K w^{-d} |P[X]|_w^{-1}
    \]
    and so by rearranging we get the desired bound.
\end{proof}


\section{Point-line configurations in $\R^3$}\label{sec:main-proof}

In this section we prove Theorem \ref{thm:main}. Let us restate the theorem in a form convenient for the proof.

\begin{theorem}\label{thm:main-restated}
    Suppose that $\operatorname{PL}_2(\gamma)$ holds for some $\gamma>0$. Then there exists some $\kappa(\gamma)>0$ such that $\operatorname{PL}_3(\kappa)$ holds. Explicitly, let $\d>0$ and
    let $X \subset \Omega_3$ be a point-line configuration with $d(X) \ge \d$. Then we have $|X| \lesssim_\varepsilon \d^{-\varepsilon}\d^{-3+\kappa}$ for every $\varepsilon>0$. 
\end{theorem}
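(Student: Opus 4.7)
The plan is to argue by contradiction: suppose $|X| \ge \d^{-3+\kappa}$ for a sufficiently small constant $\kappa>0$, and derive a contradiction by telescoping the high-low estimates of Section~\ref{sec:high-low}. First I would pass to a $(\d,K)$-uniform sub-configuration with $K = \d^{-\varepsilon}$ using Lemma~\ref{lem:uniform-subset}, at only a $\d^{-O(\varepsilon)}$ cost in the size. Writing $P = P[X]$, $L = L[X]$, and $B(w) = B(w;P,L)$, the separation condition $d(X)\ge \d$ forces $I(c\d;P,L) \le |X|$, which after normalization gives the lower-scale endpoint $B(c\d) \lessapprox_\kappa \d$. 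In parallel, Proposition~\ref{prop:plane-reduction}, which is where the hypothesis $\operatorname{PL}_2(\gamma)$ enters, tells us that $L$ determines a $(1+\gamma,2-\gamma,\d^{-\varepsilon})$-Katz--Tao set of $\d$-tubes, so the plane-spacing condition (\ref{eq:spacing-condition}) holds.

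The second endpoint is an \emph{initial estimate} $B(w_0) \gtrapprox_\kappa 1$ at an intermediate scale $w_0 = \d^{\alpha}$, where $\alpha$ is a fixed small positive constant. This comes from Proposition~\ref{prop:initial-estimate} combined with Proposition~\ref{prop:double-counting}, using the fact that the assumption $|X|\approx_\kappa \d^{-3}$ forces the rescaled configurations $X_{w_0}$ to saturate both the trivial bound and the direction count, up to a $\d^{-O(\kappa)}$ loss. Granted these two endpoints, the telescoping identity
\[
B(w_0) - B(c\d) \;=\; \sum_{c\d \le w \le w_0} \bigl(B(w) - B(w/2)\bigr)
\]
reduces the task to proving $|B(w) - B(w/2)| \le \d^{c_0}$ at every dyadic scale $w\in[c\d,w_0]$, for some $c_0 = c_0(\gamma)>0$; summed, this contradicts $B(w_0)\gtrapprox 1$ together with $B(c\d)\lessapprox \d$ once $\kappa$ is small enough.

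At a given scale $w$, I would try to upgrade the basic high-low inequality Lemma~\ref{lem:easy-high-low} --- which in $\R^3$ only yields the useless bound $|B(w)-B(w/2)|^2\lessapprox 1$ --- by using one of the two refined estimates from Section~\ref{sec:high-low}. First I would attempt Theorem~\ref{thm:small-box-high-low}: the Katz--Tao input puts $\alpha = \tfrac{1}{2}+\tfrac{\gamma}{6}$ there, and a polynomial gain of order $\d^{\gamma/6}$ drops out precisely when $L$ is well-spaced on scale $w^{1/2}$, i.e.\ $\M_L(w^{1/2}\times w^{1/2}\times 1)\lessapprox w^{2}|L|$. If well-spacing fails, then $L$ is clustered into at most $\d^{-O(\kappa)}w^{-4+\beta}$ tubes of dimensions $w^{1/2}\times w^{1/2}\times 1$ for some $\beta>0$; rescaling a $w^{1/2}$-box that meets many such tubes via Claim~\ref{claim:rescaling-stability} produces a new configuration $X'$ that still essentially saturates the trivial bound on the new scale $\d/w^{1/2}$, but whose direction set is reduced by a factor of at least $\d^{\beta}$ relative to the maximum $(\d/w^{1/2})^{-2}$. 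For $X'$, Corollary~\ref{cor:refined-high-low} then yields the desired polynomial improvement at the rescaled analogue of $w$, and by continuity it keeps working on a small range of scales around it. If the refined bound still fails at some larger scale, we repeat: failure of well-spacing triggers another rescaling, further reducing the direction set.

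The main obstacle, and the bulk of the technical work, is the bookkeeping for this iterative rescaling scheme. Each rescaling step eats into the excess exponent $\kappa$ (through the loss in $|X'|$ relative to the trivial bound for its scale and in the Katz--Tao constant), but each step also strictly extends the range of scales on which direction-restriction suffices to drive Corollary~\ref{cor:refined-high-low}. The key quantitative coincidence, specific to $\R^3$, is that the deficit in Lemma~\ref{lem:easy-high-low} relative to what is needed is only a factor $\d^{-O(\kappa)}$ rather than a fixed power of $\d$, so the polynomial savings of size $\d^{c(\gamma)}$ built into (\ref{eq:first-high-low-gain}) and (\ref{eq:second-high-low-improvement}) are enough provided we choose $\kappa \ll c(\gamma)$. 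With this choice and the observation that each rescaling roughly doubles the exponent relating the well-spacing scale to the ambient scale, the iteration terminates after $O(1)$ steps in a final configuration $X^*$ on some scale $\d^*$ for which the refined high-low inequality applies across the full interval $[\d^*,(\d^*)^{\alpha}]$; telescoping then contradicts the endpoint estimates and completes the proof.
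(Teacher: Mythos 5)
Your overall architecture (endpoints $B(c\d)\lessapprox\d$ and an initial estimate at a coarser scale, telescoping, and the dichotomy ``apply Theorem~\ref{thm:small-box-high-low} when $L$ is well-spaced at scale $w^{1/2}$, otherwise rescale into a $w^{1/2}$-box to gain a restricted direction set and apply Corollary~\ref{cor:refined-high-low}'') is the strategy of the paper's proof overview, and the Katz--Tao input via Proposition~\ref{prop:plane-reduction} and the uniformization via Lemma~\ref{lem:uniform-subset} are used exactly as you describe. But there is a genuine gap at the step you treat as routine: the initial estimate. You assert that $|X|\approx_\kappa \d^{-3}$ forces the rescaled configuration $X_{w_0}$ to ``saturate the direction count,'' so that Proposition~\ref{prop:initial-estimate} gives $B(w_0)\gtrapprox 1$ at a fixed scale $w_0=\d^{\alpha}$. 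Saturation of the trivial bound does pass to rescalings ($\d^3|X|\lesssim K^{O(1)}(\d/w)^3|X_w|$), but nothing forces $|\theta[X_{w_0}]|_{w_0}$ to be close to $|\theta[X]|_{w_0}$: inside a $w_0$-box the lines could cluster into few direction caps (bush/plane-like local structure) even though the global configuration is extremal, and Proposition~\ref{prop:double-counting} only gives an \emph{upper} bound on $|\theta[X_w]|_w$ in terms of $|L|_w$, not a lower bound. In fact, if direction-saturation of rescalings at scale $\approx\d^{1/2}$ were automatic, the whole proof would collapse to (the analogue of) Lemma~\ref{lem:many-directions-inside-box}; the hard case of the theorem is precisely when it fails.

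The paper never establishes a clean $B(\d^{\alpha})\approx 1$. Instead it works with the weaker, direction-ratio initial estimate of Proposition~\ref{prop:initial-estimate}: in Lemma~\ref{lem:few-directions-coarse-scale} one tracks the monotone exponents $\beta_j$ defined by $w^{\beta_j}=w^2|\theta[X_{w^j}]|_w$ along nested rescalings and pigeonholes a scale where $\beta_{j+1}\le(1+\gamma/16)\beta_j$, so the initial estimate loses only $w^{(\tau-1)\beta_j}$, which is then beaten by the $\nu^{\gamma/4}\sim w^{\gamma\beta_j/4}$ gain in Corollary~\ref{cor:refined-high-low}; in Lemmas~\ref{lem:many-directions-inside-box} and~\ref{lem:intermediate-case} the initial estimate $B(\Delta)\gtrsim\d^{\beta}$ comes from an \emph{assumed} direction-richness of the rescaled configuration, and the well-spacedness needed for Theorem~\ref{thm:small-box-high-low} is \emph{derived} from that richness via Proposition~\ref{prop:double-counting} rather than taken as a case hypothesis. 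The iteration in Section~\ref{sec:main-proof} is then a case analysis on these direction exponents (scales $w_j=w_0^{2^j}$, thresholds $A^j\beta^*$, explicit $\beta^*,r$), which is how the paper makes your ``bookkeeping'' converge in $O(1)$ steps. So the missing ingredient in your proposal is the mechanism that substitutes for the unavailable initial estimate --- without it, the telescoping has no valid upper endpoint, and the argument as written does not close.
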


We will prove Theorem \ref{thm:main-restated} using the following three lemmas, each giving a sufficient condition on $X$ under which we can improve on the trivial upper bound $|X| \lesssim \d^{-3}$. Recall that in Section \ref{sec:preliminaries} for a $(\d,K)$-uniform $X$ we introduced the rescaled point-line configurations $X_w$ for every $w = K^{-j} \in (\d, 1)$. Note that by definition we have
\[
d(X_w) \ge w^{-1} d(X)
\]
and that $|X_w| \gtrsim K^{-1} \M_X(w, 1, 1) \gtrsim w^3 |X|$. 

In the first lemma, we assume that for some fairly coarse scale $w$, the set of directions of $X$ is far from uniform on scale $w$. This condition is designed to make Corollary \ref{cor:refined-high-low} applicable on all scales and so it can be used to obtain a small improvement over the simple high-low. 

\begin{lemma}\label{lem:few-directions-coarse-scale}
    Suppose that $\operatorname{PL}_2(\gamma)$ holds for some $\gamma \in (0,1]$. Then the following holds with $A=A(\gamma) = \frac{100}{\gamma}$.

    Let $X \subset \Omega_3$ be a $(\d, K)$-uniform point-line configuration such that $d(X) \ge \d$. 
    Suppose that for some $w=K^{-i} \in (\d,1)$ and $\beta \in (0,1)$ we have $|\theta[X]|_w \le w^{-2+\beta}$ and $w \ge \d^{\frac{1}{A\log(2/\beta)}}$. Then $\d^3|X| \lesssim_\varepsilon K^{O(1)}\d^{-\varepsilon} w^{\beta/A}$. Here $\log$ denotes the natural logarithm.
\end{lemma}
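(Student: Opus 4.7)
The plan is to argue by contradiction using a telescoping sum of refined high--low estimates. Set $P = P[X]$, $L = L[X]$, $N = |X|$, $\lambda = \d^3 N$, and suppose for contradiction that $\lambda \ge C_\varepsilon K^{O(1)}\d^{-\varepsilon}w^{\beta/A}$ for a sufficiently large constant $C_\varepsilon$. The main input is Corollary \ref{cor:refined-high-low} applied at every dyadic scale $w' \in [\d, w]$.

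The direction concentration $|\theta[X]|_w \le w^{-2+\beta}$ propagates to finer scales via the covering bound $|\theta[X]|_{w'} \lesssim (w/w')^2 w^{-2+\beta}$, so the parameter $\nu = w^\beta$ controls directions uniformly across $w' \le w$. From $d(X) \ge \d$ and $\operatorname{PL}_2(\gamma)$, Proposition \ref{prop:plane-reduction} yields the Katz--Tao bound $\M_L(u\times v\times 1) \lesssim_\varepsilon \d^{-3-\varepsilon}u^{1+\gamma}v^{2-\gamma}$; combined with the trivial estimate $\M_L(w'\times v\times 1) \lesssim (w'/\d)^2$ for $v < \d$, this verifies the hypothesis $\M_L(w'\times w'/s\times 1) \le s^{-2+\gamma}M$ with $M = \d^{-3-\varepsilon}(w')^3$ for every $s \in (w',1]$. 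Together with $\M_P(w') \lesssim (w'/\d)^3$ from $\d$-separation of $P$, Corollary \ref{cor:refined-high-low} gives $|B(w')-B(2w')| \lesssim_\varepsilon w^{\beta\gamma/8}(w')^{-\varepsilon/2}\d^{-3-\varepsilon}/N$, and dyadic summation over $w' \in [\d, w]$ yields
\[
|B(w)-B(\d)| \lesssim_\varepsilon w^{\beta\gamma/8}\d^{-\varepsilon}/\lambda.
\]

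For the endpoint estimates, $B(\d) \lesssim \d/\lambda$ follows because $d(X) \ge \d$ forces approximate $\d$-incidences to come only from $X$-pairs (up to $O(1)$ nearby pairs controlled by $\d$-separation of $L$), so $I(\d;P,L) \lesssim N$. For the lower bound $B(w) \gtrsim K^{-O(1)}$, the hypothesis $w \ge \d^{1/(A\log(2/\beta))}$ with $A\log(2/\beta) \ge 100\log 2 \gg 2$ forces $w \gg \d^{1/2}$, so Proposition \ref{prop:initial-estimate} applies and gives $B(w) \gtrsim K^{-O(1)}|\theta[X_w]|_w/|\theta[X]|_w$. It remains to verify $|\theta[X_w]|_w \gtrsim K^{-O(1)}|\theta[X]|_w$: the contradiction hypothesis together with the scale condition forces $|X_w| \sim w^3|X| \gg |\theta[X]|_w$ (the inequality $(5-\beta+\beta/A)/(A\log(2/\beta)) < 3$ being easily checked from $A\log(2/\beta) \ge 100\log 2$), and $(\d, K)$-uniformity of $X$ then ensures that each direction in $\theta[X]$ is sampled by $\gtrsim K^{-O(1)}$ pairs of a typical $w$-ball, so by Claim \ref{claim:rescaling-stability} and Proposition \ref{prop:covering-numbers} applied to $X_w$ one obtains $|\theta[X_w]|_w \sim_{K^{O(1)}}|\theta[X]|_w$.

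Combining the two ingredients gives $K^{-O(1)} \lesssim B(w)-B(\d) \lesssim_\varepsilon w^{\beta\gamma/8}\d^{-\varepsilon}/\lambda$, which rearranges to $\lambda \lesssim_\varepsilon K^{O(1)}\d^{-\varepsilon}w^{\beta\gamma/8}$. Since $\beta\gamma/8 > \beta\gamma/100 = \beta/A$ and $w < 1$, we have $w^{\beta\gamma/8} < w^{\beta/A}$, producing a bound strictly stronger than claimed and contradicting the initial assumption for appropriate $C_\varepsilon$. The principal technical obstacle is the uniformity identification $|\theta[X_w]|_w \sim |\theta[X]|_w$ required for the initial estimate at $w$; without it one only obtains $B(w) \gtrsim K^{-O(1)}w^3$, which is insufficient to close the contradiction argument.
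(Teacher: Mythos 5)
Your telescoping high--low computation is fine: the propagation of the direction bound to finer scales, the verification of the hypotheses of Corollary \ref{cor:refined-high-low} via Proposition \ref{prop:plane-reduction} and $\d$-separation, the dyadic summation, and the endpoint bound $B(c\d)\lesssim \d^{-2}|X|^{-1}$ all match what is needed. The gap is the step you yourself flag at the end and then claim to resolve: the assertion that $(\d,K)$-uniformity forces $|\theta[X_w]|_w \sim_{K^{O(1)}} |\theta[X]|_w$, so that Proposition \ref{prop:initial-estimate} yields $B(w)\gtrsim K^{-O(1)}$. This does not follow. By Proposition \ref{prop:covering-numbers} and Claim \ref{claim:rescaling-stability}, $|\theta[X_w]|_w$ is comparable to $\M_X(w,1,1)/\M_X(w,w,w)$ (the \emph{local} direction count inside a single $w$-cube), whereas $|\theta[X]|_w$ is comparable to $|X|/\M_X(1,w,1)$ (the \emph{global} direction count). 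Uniformity makes each of these quantities the same near every point of $X$, but it does not make them comparable to each other: a perfectly uniform configuration can have all lines through any fixed $w$-cube pointing into a single $w$-cap while the directions vary from cube to cube, giving $|\theta[X_w]|_w \ll |\theta[X]|_w$. Your cardinality observation $|X_w|\gg|\theta[X]|_w$ only says there are enough pairs in a $w$-cube, not that their directions are spread out; ``each direction is sampled by some pair of $X$'' does not mean it is sampled by a pair lying in the \emph{same} $w$-cube.

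This is exactly the difficulty the paper's proof is built around, and it is why the hypothesis has the specific form $w\ge \d^{\frac{1}{A\log(2/\beta)}}$ (you only use it to get $w\gg\d^{1/2}$, which misses its purpose). The paper considers the whole cascade of rescalings $X_{w^j}$ for $j=0,\dots,m$ with $m=[\log_w\d]\gtrsim A\log(2/\beta)$, defines $\beta_j$ by $w^{\beta_j}=w^2|\theta[X_{w^j}]|_w$, and notes $\beta=\beta_0\le\beta_1\le\dots\le 2$; since there are $\gtrsim \gamma^{-1}\log(2/\beta)$ scales, pigeonholing produces a $j$ with $\beta_{j+1}\le(1+\gamma/16)\beta_j$. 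At that scale the initial estimate gives only $B_j(w)\gtrsim K^{-O(1)}w^{(\gamma/16)\beta_j}$ --- not $\gtrsim K^{-O(1)}$ --- but this small loss is beaten by the high--low gain $w^{\gamma\beta_j/8}$ from Corollary \ref{cor:refined-high-low} applied to the rescaled configuration (using $\beta_j\ge\beta$), which closes the argument. Without some substitute for this multi-scale pigeonholing, your contradiction cannot be completed: as you note, the unconditional bound from Proposition \ref{prop:initial-estimate} is far too weak, so the proposal as written has a genuine missing idea rather than a repairable technicality.
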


\begin{proof}
    For every $j =0, \ldots, m:=[\log_w\d]$ we may consider the rescaled configuration $X_{w^j}$. 
    Denote by $\beta_j$ the number such that 
    \[
    w^{\beta_j} = w^2 |\theta[X_{w^j}]|_w.
    \]
    Note that the sequence $\beta_j$ is increasing: indeed, note that $\theta[X_{w^{j+1}}] \subset \theta[X_{w^{j}}]$ and so we have
    $\beta = \beta_0 \le \beta_1 \le \ldots \le \beta_m\le 2$. 
    
    Let $\tau > 1$ be a constant to be determined later. Suppose that we have $\tau^{m-2} \beta > 2$. Then it follows that there is $j \in \{0,\ldots, m-2\}$ such that 
    \[
    \beta_j \ge \tau^j\beta ,~~ \beta_{j+1} \le \tau\beta_j.
    \]
    Let $P_j = P[X_{w^j}]$ and $L_j = L[X_{w^j}]$. By Proposition \ref{prop:initial-estimate} we then get 
    \[
    B_j(w) = B(w; P_j, L_j) \gtrsim K^{-O(1)} \frac{|\theta[X_{w^{j+1}}]|_w}{|\theta[X_{w^{j}}]|_w} = K^{-O(1)} w^{\beta_{j+1}-\beta_j} \gtrsim K^{-O(1)} w^{(\tau-1)\beta_j}.
    \]
    Now we apply Corollary \ref{cor:refined-high-low} to the pair $(P_j, L_j)$ and on scale $v \in [c\d/w^j, w]$. Write $\d_j = \d/w^j$ for convenience (recall that $d(X_{w^j}) \ge \d_j$ from rescaling). Since $P_j$ is $\d_j$-separated, we have 
    \[
    \M_{P_j}(v) \lesssim (v/\d_j)^3.
    \]
    By Proposition \ref{prop:plane-reduction}, property $\operatorname{PL}_2(\gamma)$ implies for $u \in (v,1)$:
    \[
    \M_{L_j}(v \times v/u\times 1) \lesssim_\varepsilon \d^{-\varepsilon} \d_j^{-3} v^3 u^{-2+\gamma},
    \]
    giving $\M_{L_j}(v \times v/u\times 1) \le u^{-2+\gamma} M$ for $M \lesssim \d^{-\varepsilon} \d_j^{-3} v^3$.
    Lastly, we have
    \[
    |\theta(L_j)|_v \lesssim (w/v)^2 |\theta(L_j)|_w \lesssim w^{\beta_j} v^{-2},
    \]
    giving $ |\theta(L_j)|_v \le \nu v^{-2} $ with $\nu \sim w^{\beta_j}$.
    So by Corollary \ref{cor:refined-high-low} we get
    \begin{align*}
        |B(v) - B(2v)|^2 \lesssim_\varepsilon \d^{-\varepsilon} \nu^{\gamma/4} v^{-6} \frac{\M_{P_j}(v)}{|P_j|} \frac{M}{|L_j|} \lesssim \d^{-2\varepsilon} w^{\gamma \beta_j/4} (\d_j^{3} |X_j|)^{-2}. 
    \end{align*}

    So since $d(X_j) \ge \d_j$ we have $B_j(c \d_j) \lesssim \d_j^{-2} |X_j|^{-1}$ and using the initial estimate $B_j(w )\gtrsim K^{-O(1)} w^{(\tau-1)\alpha_j}$ we then obtain by summing the high-low errors over all $v$:
    \[
     K^{-O(1)} w^{(\tau-1)\beta_j} \lesssim \d_j^{-2} |X_j|^{-1} + \d^{-o(1)} w^{\gamma \beta_j / 8}(\d_j^{3} |X_j|)^{-1}.
    \]
    If we choose $\tau = 1+\gamma /16$ then this bound implies
    \[
    \d_j^3|X_j| \lesssim K^{O(1)}\max(\d^{-o(1)} w^{\gamma \beta_j/16}, \d_j w^{-\gamma \beta_j/16} ) \le  K^{O(1)}\d^{-o(1)} w^{\gamma \beta_j/16} \le K^{O(1)}\d^{-o(1)} w^{\gamma \beta/16}.
    \]
    We have $\d^3|X| \lesssim \d_j^3|X_j|$ so this gives the desired bound. For the argument to work, we need the condition $(1+\gamma/16)^{m-2} > 2/\beta$ where $m = [\log_w \d]$ which is satisfied if, say, $m \ge 100 \gamma^{-1} \log(2/\beta)$.
\end{proof}

In the second lemma we consider an opposite extreme: we assume that the restriction of $X$ onto every $\d^{1/2}$ box $Q$ spans almost all directions on scale $\d^{1/2}$. This condition allows us to apply Theorem \ref{thm:small-box-high-low} on all intermediate scales. 

\begin{lemma}\label{lem:many-directions-inside-box}
    Suppose that $\operatorname{PL}_2(\gamma)$ holds for some $\gamma \in (0,1]$. Then the following holds with $\beta_0=\beta_0(\gamma) = \gamma/200$.


    Let $X \subset \Omega_3$ be a $(\d, K)$-uniform configuration with $d(X)\ge \d$. Suppose that for some $\beta \in [0,\beta_0]$ we have $|\theta[X_{\Delta}]|_{\Delta} \ge \Delta^{-2} \d^{\beta}$ for some $\Delta =K^{-i}\in [C\d^{1/2}, CK\d^{1/2}]$. Then we have $\d^3|X| \lesssim K^{O(1)}\d^{\beta_0}$.
\end{lemma}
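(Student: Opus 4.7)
The plan is to argue by contradiction: assume $\d^3|X| \ge D K^C\d^{\beta_0}$ for large absolute constants $D, C$, and derive a contradiction by combining an initial estimate at scale $\Delta$ with Theorem \ref{thm:small-box-high-low} applied at every dyadic scale $w \in [\d,\Delta]$.

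First I would translate the direction hypothesis $|\theta[X_\Delta]|_\Delta \ge \Delta^{-2}\d^\beta$ into a well-spacing statement for $L$. Proposition \ref{prop:double-counting} at scale $\Delta$, together with $|P[X]|_\Delta \gtrsim K^{-O(1)}\Delta^{-3}$ from $(\d,K)$-uniformity and the assumption that $|X|$ is close to the trivial upper bound, gives $|L[X]|_\Delta \gtrsim K^{-O(1)}\Delta^{-4}\d^\beta$. Since $|L[X]|_\Delta \sim_K |L|/\M_L(\Delta\times\Delta\times 1)$ by uniformity and $|L| \le C\d^{-3}$, this rearranges to $\M_L(\Delta\times\Delta\times 1) \lesssim K^{O(1)}\d^{-1-\beta}$. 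By Proposition \ref{prop:easy-relations-M} this propagates upward to $\M_L(w^{1/2}\times w^{1/2}\times 1) \lesssim K^{O(1)}w^2\d^{-3-\beta}$ for every $w \in [\d,1]$.

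Next I would exploit Proposition \ref{prop:initial-estimate} at scale $\Delta$ to obtain $B(\Delta) \gtrsim K^{-O(1)}|\theta[X_\Delta]|_\Delta/|\theta[X]|_\Delta \gtrsim K^{-O(1)}\d^\beta$, while a direct estimate using $d(X) \ge \d$ shows $B(\d) \lesssim K^{O(1)}\d^{1-\beta_0} \ll \d^\beta$. A contradiction would follow from bounding the telescoping sum $\sum_{w \in [\d,\Delta]} |B(w/2)-B(w)|$ by $o(\d^\beta)$. To achieve this, for each dyadic $w \in [\d,\Delta]$ I would apply Theorem \ref{thm:small-box-high-low} at scale $w$ (with $w^{1/2}$-boxes); its preconditions hold via $(\d,K)$-uniformity of $X$ (providing $C_0 = K^{O(1)}$ and $A \le \d^{-\beta_0}$) and Proposition \ref{prop:plane-reduction} (providing the Katz--Tao axiom with exponents $(1+\gamma,2-\gamma)$ and constant $K^{O(1)}\d^{-\varepsilon}$, so $\alpha = 1/2 + \gamma/6$). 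Substituting the Katz--Tao bound $\M_L(w\times w\times 1)\lesssim \d^{-3-\varepsilon}w^3$ together with the well-spacing estimate, the $\d$-exponents cancel as $-3(1-\alpha)-3\alpha+3=0$ and the $w$-exponent reads $-5/2 + 2(1-\alpha)+3\alpha = \gamma/6$, yielding
\[ |B(w/2)-B(w)|^{9/2} \lesssim K^{O(1)}\d^{-O(\varepsilon+\beta_0+\beta)}\, w^{\gamma/6}. \]
Taking $9/2$-th roots and summing the geometric series $\sum w^{\gamma/27}$ over dyadic $w \in [\d,\Delta]$ bounds the telescoping sum by $K^{O(1)}\d^{\gamma/54 - O(\beta_0+\varepsilon)}$. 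For $\beta_0 = \gamma/200$ the exponent $\gamma/54 - O(\beta_0)$ strictly exceeds $\beta \le \beta_0$, yielding the desired contradiction for $\d$ and $\varepsilon$ sufficiently small.

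The main obstacle is verifying uniformly over scales $w \in [\d,\Delta]$ that every precondition of Theorem \ref{thm:small-box-high-low} holds with error $K^{O(1)}\d^{-O(\varepsilon)}$, and the careful bookkeeping of the $\d^{-O(\beta_0)}$ losses from the $A$-factor, the Katz--Tao constant, and the $\d^{-\beta}$ factor in the well-spacing bound, so that they are all absorbed by the $\d^{\gamma/54}$ gain from summing the small-box high-low improvements. The clean cancellation that produces the $w^{\gamma/6}$ factor is a numerical coincidence specific to three dimensions and the particular value $\alpha = (3+\gamma)/6$.
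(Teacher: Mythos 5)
Your proposal follows essentially the same route as the paper's proof of this lemma: the initial estimate $B(\Delta)\gtrsim K^{-O(1)}\d^{\beta}$ via Proposition \ref{prop:initial-estimate}, the well-spacing bound $\M_L(w^{1/2}\times w^{1/2}\times 1)\lesssim K^{O(1)}w^2\d^{-3-\beta-O(\beta_0)}$ obtained from the direction hypothesis through Proposition \ref{prop:double-counting}, Proposition \ref{prop:easy-relations-M} and uniformity, then Theorem \ref{thm:small-box-high-low} at every scale $w\in[\d,\Delta]$ with the Katz--Tao input from Proposition \ref{prop:plane-reduction}, closed by the same exponent bookkeeping ($\alpha-\tfrac12=\gamma/6$, gain $\d^{\gamma/54}$ versus losses of order $\beta_0+10\beta/9$ with $\beta_0=\gamma/200$), the contradiction framing being only a cosmetic difference from the paper's direct bound which keeps $\d^3|X|$ symbolic. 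One caveat: the Katz--Tao constant of the family of essentially distinct $w$-tubes fed into Theorem \ref{thm:small-box-high-low} is $K^{O(1)}\d^{-\varepsilon}(w/\d)^3\M_L(w\times w\times 1)^{-1}$ rather than $K^{O(1)}\d^{-\varepsilon}$ as you assert, but since the theorem's error depends only on the product of (Katz--Tao constant)$^{\alpha}$ with $\M_L(w\times w\times 1)^{\alpha}$, your substitution of the Proposition \ref{prop:plane-reduction} bound $\M_L(w\times w\times1)\lesssim\d^{-3-\varepsilon}w^3$ reproduces exactly the correct product, so your displayed estimate $|B(w/2)-B(w)|^{9/2}\lesssim K^{O(1)}\d^{-O(\varepsilon+\beta_0+\beta)}w^{\gamma/6}$ and the ensuing contradiction are valid.
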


\begin{proof}
    By Proposition \ref{prop:initial-estimate} we have 
    \[
    B(\Delta) = B(\Delta; P[X], L[X]) \gtrsim K^{-O(1)} \frac{|\theta[X_{\Delta}]|_{\Delta}}{|\theta[X]|_{\Delta}} \gtrsim K^{-O(1)} \frac{\Delta^{-2}\d^{\beta}}{\Delta^{-2}} \gtrsim K^{-O(1)} \d^\beta.
    \] 
    Now let $w \in [c\d, \Delta]$ and let us estimate the difference $B(w/2) - B(w)$ using Theorem \ref{thm:small-box-high-low}. Let $P = P[X]$ and $L=L[X]$. It follows from uniformity and (\ref{eq:MX-lip}) that $L$ is $C_0\sim K^{O(1)}$ uniform on scale $w$. Let $\T_w$ be the set of $w$-tubes determined by $L$. We have by Proposition \ref{prop:covering-numbers} that $|\T_w| = K^{O(1)} \frac{|L|}{\M_L(w\times w\times 1)}$.
    
    By Proposition \ref{prop:plane-reduction}, property $\operatorname{PL}_2(\gamma)$ implies that 
    \[
    \M_{L}(u\times v\times 1) \lesssim_\varepsilon \d^{-\varepsilon} u^{1+\gamma} v^{2-\gamma} \d^{-3}.
    \]
    Therefore, using uniformity of $X$, we get that for any $u\times v\times 1$ box $\Pi$ we have
    \[
    |\T_w\cap \Pi| \lesssim K^{O(1)} \frac{\M_{L}(u\times v\times 1)}{\M_L(w\times w\times 1)} \lesssim\left( \d^{-\varepsilon} K^{O(1)} (w/\d)^{3} \M_L(w\times w\times 1)^{-1} \right) (u/w)^{1+\gamma} (v/w)^{2-\gamma}.
    \]
    We conclude that $\T_w$ is a $(1+\gamma, 2-\gamma, K_w)$-Katz--Tao set of $w$-tubes where $K_w \sim \d^{-\varepsilon} K^{O(1)} (w/\d)^{3} \M_L(w\times w\times 1)^{-1}$.

    Note that we have $|P|_\Delta \gtrsim (\d/\Delta)^3 |P|_\d \gtrsim \Delta^{-3} (\d^3|X|)$.
    By Proposition \ref{prop:double-counting} we have
    \[
    |L|_{\Delta} \gtrsim K^{-O(1)} \Delta |\theta[X_{\Delta}]|_{\Delta} |P|_{\Delta} \gtrsim K^{-O(1)} \Delta^{-4} \d^{\beta}(\d^3|X|)
    \]
    and so by Proposition \ref{prop:easy-relations-M} we obtain  
    \begin{equation}\label{eq:Llargecovering}
    |L|_{w^{1/2}} \gtrsim K^{O(1)}(\Delta/w^{1/2})^4 |L|_\Delta \gtrsim K^{-O(1)}w^{-2} \d^{\beta} (\d^3|X|).    
    \end{equation}
    Using $(\d, K)$-uniformity of $X$ this implies
    \[
    \M_L(w^{1/2}\times w^{1/2} \times 1) \lesssim K^{O(1)}w^2 \d^{-\beta} |L| (\d^3|X|)^{-1}.
    \]
    So by Theorem \ref{thm:small-box-high-low} applied with $t_1=1+\gamma$, $t_2=2-\gamma$ we have $\alpha = \frac{t_1+2}{2t_1+2t_2} = \frac{3+\gamma}{6}$ and
    \begin{align*}
        |B(w/2) &- B(w)|^{9/2}\lesssim \\&\lesssim_{\varepsilon} K^{O(1)} \d^{-\varepsilon} w^{-5/2} K_w^\alpha (\d^3|X|)^{-7/2} \frac{\M_L(w^{1/2} \times w^{1/2} \times 1)^{1-\alpha}\M_L(w \times w \times 1)^{\alpha}}{|L|} \\
        &\lesssim K^{O(1)}\d^{-2\varepsilon}w^{-5/2} (w/\d)^{3\alpha} (\d^3|X|)^{-7/2}  \frac{\M_L(w^{1/2} \times w^{1/2} \times 1)^{1-\alpha}}{|L|} \\
        &\lesssim K^{O(1)}\d^{-2\varepsilon}w^{-5/2} (w/\d)^{3\alpha}(\d^3|X|)^{-9/2+\alpha} (w^{2} \d^{-\beta})^{1-\alpha}|L|^{-\alpha}\\
        &\lesssim K^{O(1)}\d^{-2\varepsilon} \d^{-\beta(1-\alpha)}(\d^3|X|)^{-9/2} w^{\alpha-\frac{1}{2}}
    \end{align*}
    Now recall that $w\le \d^{1/2}$ and $\alpha -\frac12 = \frac\gamma 6$:
    \begin{equation}\label{eq:Bw-difference-Lem2}
    |B(w/2) - B(w)| \lesssim K^{O(1)}\d^{-\varepsilon} \d^{-\beta/9} (\d^3|X|)^{-1} \d^{\gamma/54}.    
    \end{equation}
    Recalling $B(c\d) \lesssim \d^{-2}|X|^{-1}$ (which follows from $d(X)\ge \d$) we conclude that
    \[
    B(\Delta) \lesssim \max_{w}(B(c\d), |B(w)-B(w/2)|)
    \]
    \[
    \d^\beta \lesssim K^{O(1)}\d^{-\varepsilon}\max( (\d^3|X|)^{-1} \d^{\gamma/54 - \beta/9},  \d^{-2}|X|^{-1})
    \]
    \[
     \d^3|X| \lesssim  K^{O(1)}\d^{-\varepsilon}\max\{\d^{\gamma/54 - 10\beta/9}, \d^{1-\beta}  \} \lesssim \d^{\beta_0}
    \]
    if we define $\beta_0=\gamma/200$. This completes the proof.
\end{proof}

Lastly, in the third lemma we consider an intermediate situation where $X$ does not span all directions on some scale $\rho$ but restrictions of $X$ onto $\rho^{1/2}$-boxes span almost all directions on scale $\rho^{1/2}$. This condition interpolates between the two conditions in previous lemmas and it allows us to combine Corollary \ref{cor:refined-high-low} and Theorem \ref{thm:small-box-high-low} to cover all scales.

\begin{lemma}\label{lem:intermediate-case}
    Suppose that $\operatorname{PL}_2(\gamma)$ holds for some $\gamma \in (0,1]$. Then the following holds with $A=A(\gamma)=\frac{100}{\gamma}$.

    Let $X\subset \Omega_3$ be a $(\d, K)$-uniform  configuration such that $d(X) \ge \d$. Let $w =K^{-2i}\in (\d,1)$ and suppose that $|\theta[X]|_w \le w^{-2}\d^{\sigma}$ and $|\theta[X_{w^{1/2}}]|_{w^{1/2}} \ge w^{-1}\d^{\beta}$ holds for some $\beta, \sigma \in [0,1]$. 
    
    Suppose that $w \le \d^{2A\beta}$ and $\sigma \ge A \beta$ and $\beta\le 1/2$.
    Then $\d^3|X| \lesssim K^{O(1)} \d^{-\varepsilon}\max(w^{1/A}, \d^{ \sigma/A})$.
\end{lemma}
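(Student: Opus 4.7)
The plan is to combine the initial estimate at scale $\Delta_0 = w^{1/2}$ with a telescoping chain of high-low differences, applying two different improvements over the basic high-low on complementary ranges of intermediate scales, and then to reconcile the resulting bound with the target $\max(w^{1/A}, \d^{\sigma/A})$.

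Since $w = K^{-2i}$, the scale $w^{1/2} = K^{-i}$ is a valid uniformity scale. Applying Proposition~\ref{prop:initial-estimate} at scale $w^{1/2}$, the hypothesis $|\theta[X_{w^{1/2}}]|_{w^{1/2}} \ge w^{-1}\d^\beta$ against the trivial bound $|\theta[X]|_{w^{1/2}} \lesssim K^{O(1)} w^{-1}$ yields $B(w^{1/2}) \gtrsim K^{-O(1)}\d^\beta$. On the other extreme, the $\d$-separation of $P[X]$ (forced by $d(X) \ge \d$) gives $B(c\d) \lesssim \d\,(\d^3|X|)^{-1}$. Thus it suffices to bound $\sum_v |B(v) - B(2v)|$ over dyadic $v \in (c\d, w^{1/2})$, which I would split at $v = w$.

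On the low range $v \in (c\d, w)$, Proposition~\ref{prop:easy-relations-M} upgrades $|\theta[X]|_w \le w^{-2}\d^\sigma$ to $|\theta[X]|_v \le v^{-2}\d^\sigma$, so Corollary~\ref{cor:refined-high-low} applies with $\nu = \d^\sigma$ and, via Proposition~\ref{prop:plane-reduction}, the Katz--Tao input $\M_L(v\times v/u\times 1) \le u^{-2+\gamma} M$ with $M \sim \d^{-\varepsilon-3}v^3$; together with $\M_P(v) \lesssim (v/\d)^3$ this produces
\[
|B(v)-B(2v)| \lesssim K^{O(1)}\d^{-\varepsilon}\d^{\sigma\gamma/8}(\d^3|X|)^{-1}.
\]
On the high range $v \in (w, w^{1/2})$, I would apply Theorem~\ref{thm:small-box-high-low} with $(t_1,t_2) = (1+\gamma, 2-\gamma)$, so $\alpha = (3+\gamma)/6$, with $A \sim K^{O(1)}(\d^3|X|)^{-1}$ and Katz--Tao constant $K_v \sim K^{O(1)}\d^{-\varepsilon}(v/\d)^3\M_L(v\times v\times 1)^{-1}$. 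The crucial input is a bound on $\M_L(v^{1/2}\times v^{1/2}\times 1)$. Proposition~\ref{prop:double-counting} at scale $w^{1/2}$, together with $|P|_{w^{1/2}} \gtrsim K^{-O(1)} w^{-3/2}\d^3|X|$ from $\d$-separation and the inside-box direction hypothesis, yields $|L|_{w^{1/2}} \gtrsim K^{-O(1)} w^{-2}\d^\beta(\d^3|X|)$, hence $\M_L(w^{1/2}\times w^{1/2}\times 1) \lesssim K^{O(1)} w^2\d^{-\beta}|L|(\d^3|X|)^{-1}$. Proposition~\ref{prop:easy-relations-M} then extends this to $\M_L(v^{1/2}\times v^{1/2}\times 1) \lesssim K^{O(1)} v^2\d^{-\beta}|L|(\d^3|X|)^{-1}$ for $v \in [w, w^{1/2}]$. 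Substituting into Theorem~\ref{thm:small-box-high-low}, using $|L| \sim K^{O(1)}|X|$ and $v \le w^{1/2}$, gives
\[
|B(v/2) - B(v)| \lesssim K^{O(1)}\d^{-\varepsilon}\d^{-\beta(3-\gamma)/27}w^{\gamma/54}(\d^3|X|)^{-1}.
\]

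Combining the initial lower bound with the telescope would give
\[
\d^3|X| \lesssim K^{O(1)}\d^{-\varepsilon-\beta}\max\bigl(\d,\; \d^{\sigma\gamma/8},\; \d^{-\beta(30-\gamma)/27}w^{\gamma/54}\bigr),
\]
and the remaining work, which I expect to be the main obstacle, is the bookkeeping needed to verify that each of the three resulting candidate bounds on $\d^3|X|$ is at most $\max(w^{1/A}, \d^{\sigma/A})$ with $A = 100/\gamma$. The trivial term $\d^{1-\beta}$ is bounded by $\d^{\sigma/A} = \d^{\sigma\gamma/100}$ using $\beta \le 1/2$ (and $\sigma\gamma \le 1$); the refined-high-low term $\d^{\sigma\gamma/8-\beta}$ is bounded by $\d^{\sigma/A}$ precisely because $\sigma\gamma(1/8 - 1/100) \ge \beta$, which is the content of the hypothesis $\sigma \ge A\beta$; and the small-box term $\d^{-\beta(30-\gamma)/27}w^{\gamma/54}$ is bounded by $w^{1/A} = w^{\gamma/100}$ after substituting $w = \d^\eta$ with $\eta \ge 2A\beta$ (from $w \le \d^{2A\beta}$) and checking $\beta(30-\gamma)/27 \le 46\beta/27$. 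The choice $A = 100/\gamma$ is precisely what makes all three of these exponent gaps close simultaneously.
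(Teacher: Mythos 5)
Your proposal is correct and takes essentially the same route as the paper's proof: the initial estimate (Proposition~\ref{prop:initial-estimate}) at scale $w^{1/2}$, Corollary~\ref{cor:refined-high-low} with the $\sigma$-hypothesis on scales in $(c\delta,w)$, Theorem~\ref{thm:small-box-high-low} on scales in $(w,w^{1/2})$ with the spacing input $\M_L(w^{1/2}\times w^{1/2}\times 1)\lesssim K^{O(1)}w^2\delta^{-\beta}|L|(\delta^3|X|)^{-1}$ obtained from Proposition~\ref{prop:double-counting} and the $\beta$-hypothesis, and then the same telescoping and exponent bookkeeping (which you carry out more explicitly than the paper does). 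The only blemish is cosmetic: in your combined display the factor $\delta^{-\beta}$ appears both outside the maximum and already folded into the third term, but your subsequent verification uses the correct three candidate bounds $\delta^{1-\beta}$, $\delta^{\sigma\gamma/8-\beta}$, $\delta^{-\beta(30-\gamma)/27}w^{\gamma/54}$, so the argument stands.
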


\begin{proof}
    The proof essentially follows by combining the computations we did in Lemma \ref{lem:few-directions-coarse-scale} and Lemma \ref{lem:many-directions-inside-box}. First, by Proposition \ref{prop:initial-estimate} we have $B(Cw^{1/2}) \gtrsim K^{-O(1)} \d^\beta$ and 
    by repeating the proof of Lemma \ref{lem:many-directions-inside-box} with $w$ and $w^{1/2}$ in place of $\d$ and $\Delta$, we can show that for $v \in [cw, Cw^{1/2}]$ we have the upper bound (cf. (\ref{eq:Bw-difference-Lem2}))
    \[
    |B(v/2) - B(v)| \lesssim K^{O(1)} \d^{-\varepsilon} \d^{-\beta/9} (\d^3 |X|)^{-1} w^{\gamma/54}
    \]

    For $v \in [c\d, \rho]$ we have $|\theta[X]|_v \lesssim (w/v)^2 |\theta[X]|_w \lesssim v^{-2} \d^{\sigma}$. So using Corollary \ref{cor:refined-high-low} we have 
    \[
    |B(v) - B(2v)|^2 \lesssim \d^{-\varepsilon} \d^{\sigma \gamma/4} v^{-6} \frac{\M_{P[X]}(v)}{|P|} \frac{M}{|L|} \lesssim \d^{-\varepsilon} \d^{\sigma \gamma/4} (\d^3 |X|)^{-2}.
    \]
    So by summing over $v \in (c\d, C\rho^{1/2})$ we obtain
    \[
    |B(c\d) - B(Cw^{1/2})| \lesssim K^{O(1)} \d^{-\varepsilon} (\d^3 |X|)^{-1} \max( \d^{-\beta/9}  w^{\gamma/54},  \d^{\gamma\sigma /8})
    \]
    and thus by using  $B(c\d) \lesssim \d^{-2}|X|^{-1}$ we conclude
    \[
    \d^3|X| \lesssim K^{O(1)} \d^{-\varepsilon}  \max(\d^{-10\beta/9} w^{\gamma/54},~\d^{-\beta} \d^{\gamma\sigma/8} ,~ \d^{1-\beta}  )
    \]
    Using the restrictions on $\beta$, $\sigma$, $\rho$ we get the desired bound on $\d^3|X|$.
    \end{proof}

Now we are ready prove Theorem \ref{thm:main-restated} (and thus Theorem \ref{thm:main} as well).

 \begin{proof}[Proof of Theorem \ref{thm:main-restated}]
     
Suppose that $\operatorname{PL}_2(\gamma)$ holds for some $\gamma>0$ and fix a point-line configuration $X$ with $d(X)\ge \d$. Let $A= \frac{100}{\gamma}$ and $\beta_0 = \gamma/200$. 


Let $\varepsilon>0$ be an arbitrarily small constant and denote $K=\d^{\varepsilon}$. By Lemma \ref{lem:uniform-subset} and the comments after it we can pass to a subset $X' \subset X$ which is $(\d, K)$-uniform and $|X'| \gtrsim (\log1/\d)^{O(\varepsilon^{-O(1)})} |X|$. So without loss of generality, we may replace $X$ with $X'$ and assume that $X$ is $(\d, K)$-uniform. Let us say that a scale $w\in [\d, 1]$ is admissible if $w=K^{-i}$ for an integer $i$.
Then for every admissible $w \in (\d, 1)$ we have a rescaled point-line configuration $X_w$ which satisfies $d(X_w) \ge \d/w$. From definition of $X_w$, we have 
\begin{equation}
    \d^3 |X| \lesssim K^{O(1)} (\d/w)^3|X_w|
\end{equation}
so it suffices to upper bound the size of $|X_w|$ for some $w$. Suppose that $\d^3|X| \ge \d^{\kappa}$ for some constant $\kappa>0$, our goal is to estimate this constant. Note that by Claim \ref{claim:rescaling-stability} the configuration $X_w$ is $(\d/w, O(K^{O(1)}))$-uniform and so the lemmas from section can be applied to $X_w$. When we apply lemmas to $X_w$, we define the rescaled configurations by $(X_w)_u = X_{wu}$.

Let $\rho_0 =\d^{r}$ for some $r\in (0,1/4)$ be a large admissible scale and let $\beta^* \in (0,1)$ be a small constant to be determined later (see the end of the proof where we choose these two parameters). 
Let $\lambda = \log(2/\beta^*)$. Let $w_0$ be the smallest admissible scale such that $w_0 \ge \rho_0^{\frac{1}{A\lambda}}$ (so we have the upper bound $w_0 \le K\rho_0^{\frac{1}{A\lambda}}$).
Then by Lemma \ref{lem:few-directions-coarse-scale} applied to $X_{\d/\rho_0}$ we get that at least one of the following two options holds:
\begin{itemize}
    \item[(i)] $\rho_0^3 |X_{\d/\rho_0}| \lesssim K^{O(1)} \rho_0^{-\varepsilon} w_0^{\beta^*/A}$ or
    \item[(ii)] $|\theta[X_{\d/\rho_0}]|_{w_0} \ge w_0^{-2+\beta^*}$.
\end{itemize}
In the first case it follows that
\[
    \d^{\kappa} \lesssim K^{O(1)} \rho_0^{-\varepsilon} w_0^{\beta^*/A} \lesssim K^{O(1)} \d^{-\varepsilon} \rho_0^{\frac{\beta^*}{A^2 \log(2/\beta^*)}},
\]
which then gives
\begin{equation}\label{eq:first-exit}
    \kappa \ge \frac{r\beta^*}{A^2 \lambda} + O(\varepsilon)
\end{equation}
As long as $\beta^*$ and $r$ are taken to be constants depending only on $\gamma$ this will give a sufficient lower bound on $\kappa$. 

Let us now consider the second case. Let $\rho_1 = \rho_0w_0$ and $w_1 = w_0^2$. Note that using our convention we have $(X_{\d/\rho_1})_{w_0} = X_{\d/\rho_0}$.
Define $\beta_1$ to be the number such that $\rho_1^{\beta_1} = w_0^{\beta^*}$. So by rearranging and using that $\log_{\rho_0} w_0 = \frac{1}{A\lambda} + O(\varepsilon)$, we get 
\[
\beta_1 = \frac{\beta^*}{A\lambda+1} + O(\varepsilon).
\]
In particular it is clear that $\beta_1\le 1/2$. 
So we get that 
\[
|\theta[(X_{\d/\rho_1})_{w_1^{1/2}}]|_{w_1^{1/2}} \ge w_1^{-1} \rho_1^{\beta_1}. 
\]
Let $\sigma_1$ be the number such that
\[
|\theta[X_{\rho_1}]|_{w_1} = w_1^{-2} \rho_1^{\sigma_1}.
\]
So by Lemma \ref{lem:intermediate-case} applied to $X_{\d/\rho_1}$ and $w=w_1$: if $w_1 \le \rho_1^{2A\beta_1}$ and $\sigma_1 \ge A\beta_1$ then we have 
\[
\rho_1^3 |X_{\d/\rho_1}| \lesssim K^{O(1)} \rho_1^{-\varepsilon} \max(w_1^{1/A}, \rho_1^{\sigma_1/A}) \le K^{O(1)} \d^{-\varepsilon} \max(w_0^{2/A}, w_0^{\beta^*})
\]
where we used $\sigma_1\ge A\beta_1$ and $\rho_1^{\beta_1} = w_0^{\beta^*}$, so this leads to
\begin{equation}\label{eq:second-exit}
    \kappa \ge \frac{r}{A\lambda} \max(2/A, \beta^*) +O(\varepsilon).
\end{equation}
Note that we have $\rho_1^{2A\beta_1} = w_0^{2A\beta^*}$ so if $\beta^* \le 1/A$ then this implies $w_1 \le \rho_1^{2A\beta_1}$. So if Lemma \ref{lem:intermediate-case} does not apply, then we must have $\sigma_1 \le A\beta_1$, i.e.
\begin{equation}\label{eq:start-iteration}
|\theta[X_{\d/\rho_1}]|_{w_1} \ge w_1^{-2} \rho_1^{A\beta_1} = w_1^{-2} w_0^{A\beta^*}.    
\end{equation}
This condition is analogous to the one in alternative (ii) above. We can now iterate this argument as follows. Define a pair of scales $(\rho_j, w_j)$ by the rule 
\[
\rho_j = \rho_{j-1} w_{j-1}, \quad w_j = w_{j-1}^2, 
\]
which gives $w_j = w_0^{2^j}$ and $\rho_j = \rho_0 w_0^{2^{j}-1}$, $j\ge 0$. Suppose that we have for some $j\ge 1$ that
\begin{equation}\label{eq:iteration-condition}
    |\theta[X_{\d/\rho_j}]|_{w_j} \ge w_j^{-2} w_0^{A^{j} \beta^*}
\end{equation}
and let us try to prove (\ref{eq:iteration-condition}) for $j+1$. Suppose that $j$ is such that $\rho_{j+1} \ge \d^{1/2}$.
Let $\beta_j$ be the number so that $\rho_{j+1}^{\beta_{j+1}} = w_0^{A^{j} \beta^*}$. This then implies by (\ref{eq:iteration-condition}) that
\[
|\theta[(X_{\d/\rho_{j+1}})_{w_{j+1}^{1/2}}]|_{w_{j+1}^{1/2}} \ge w_{j+1}^{-1} \rho_{j+1}^{\beta_{j+1}}.
\]
Let $\sigma_{j+1}$ be the number so that 
\[
|\theta[X_{\d/\rho_{j+1}}]|_{w_{j+1}} = w_{j+1}^{-2} \rho_{j+1}^{\sigma_{j+1}}
\]
Then by Lemma \ref{lem:intermediate-case}, if we have $\beta_{j+1}\le 1/2$, $w_{j+1} \le \rho_{j+1}^{2A \beta_{j+1}}$ and $\sigma_{j+1} \ge A\beta_{j+1}$ then it follows that
\[
\rho_{j+1}^3 |X_{\d/\rho_{j+1}}| \lesssim K^{O(1)} \rho_{j+1}^{-\varepsilon} \max (w_{j+1}^{1/A}, \rho_{j+1}^{\sigma_{j+1}/A}) \le K^{O(1)} \rho_{j+1}^{-\varepsilon}\max (w_{0}^{2^{j+1}/A}, w_0^{A^j \beta^*})
\]
where we used the lower bound on $\sigma_{j+1}$ and the definition of $\beta_{j+1}$. So this gives 
\begin{equation}\label{eq:step-j-exit}
    \kappa \ge \frac{r}{A\lambda} \max(2^{j+1}/A, A^j \beta^*)+O(\varepsilon).
\end{equation}
So we may assume that conditions of Lemma \ref{lem:intermediate-case} do not apply. If $\sigma_{j+1} \ge A\beta_{j+1}$ then this precisely means that (\ref{eq:iteration-condition}) holds with $j$ replaced by $j+1$ and so we completed an iteration step. Otherwise, it could happen that one of $\beta_{j+1} > 1/2$, or $w_{j+1} > \rho_{j+1}^{2A\beta_{j+1}}$ or $\rho_{j+1}<\d^{1/2}$ holds. Let us check what restrictions this puts on $j$. By definition,
\[
\rho_{j+1} = \rho_0 w_0^{2^{j+1}-1} = \rho_0^{ 1 + (2^{j+1}-1) /A\lambda + O(\varepsilon) } = \d^{r(1 + (2^{j+1}-1) /A\lambda)+O(\varepsilon)}
\]
so we have $\rho_{j+1}\ge \d^{1/2}$ as long as $r(1 + 2^{j+1} /A\lambda) \le 1/3$ and $\varepsilon$ is much smaller than all other parameters. Next, we have 
\[
\rho_{j+1}^{\beta_{j+1}} = w_0^{A^j \beta^*}
\]
\[
\beta_{j+1} = \frac{A^{j} \beta^* }{A\lambda +2^{j}} + O(\varepsilon)
\]
so we will have $\beta_{j+1} \le 1/2$ as long as, say, $A^j\beta^* \le 1/2$ holds. Finally, the condition $w_{j+1} \le \rho_{j+1}^{2A\beta_{j+1}}$ is equivalent to 
\[
w_0^{2^{j+1}} \le w_0^{2 A^{j+1}\beta^*}
\]
i.e. we need $A^{j+1}\beta^* \le 2^j$. We conclude these observations in the following proposition:
\begin{prop}\label{prop:conditions-for-iteration}
    Suppose that $j\ge 1$ is such that $A^{j+1} \beta^* \le 1$ and $2^{j}r \le 1$. Then (\ref{eq:iteration-condition}) implies that either (\ref{eq:iteration-condition}) holds with $j+1$ instead of $j$ or (\ref{eq:step-j-exit}) holds.
\end{prop}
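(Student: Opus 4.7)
The plan is to directly verify, in an inductive fashion, that the hypotheses of Lemma \ref{lem:intermediate-case} are met when applied to the rescaled configuration $X_{\d/\rho_{j+1}}$ on scale $w=w_{j+1}$; the proposition is essentially the `inductive step' of the iteration that was already begun at $j=0, 1$ in the passages leading up to it, so the proof is a bookkeeping exercise that mirrors the $j=1$ case verbatim.

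First I would define $\beta_{j+1}$ and $\sigma_{j+1}$ by the identities $\rho_{j+1}^{\beta_{j+1}}=w_0^{A^{j}\beta^*}$ and $|\theta[X_{\d/\rho_{j+1}}]|_{w_{j+1}}=w_{j+1}^{-2}\rho_{j+1}^{\sigma_{j+1}}$. Using $\rho_{j+1}=\d^{r(1+(2^{j+1}-1)/(A\lambda))+O(\varepsilon)}$ and $w_{j+1}=w_0^{2^{j+1}}$, a direct computation (as in the text) gives $\beta_{j+1}=A^{j}\beta^*/(A\lambda+2^{j})+O(\varepsilon)$. The inductive hypothesis (\ref{eq:iteration-condition}) applied to $(X_{\d/\rho_{j+1}})_{w_{j+1}^{1/2}}=X_{\d/\rho_j}$ and the definition of $\beta_{j+1}$ immediately yield
\[
\bigl|\theta\bigl[(X_{\d/\rho_{j+1}})_{w_{j+1}^{1/2}}\bigr]\bigr|_{w_{j+1}^{1/2}}\ge w_{j+1}^{-1}\rho_{j+1}^{\beta_{j+1}},
\]
which is exactly the direction non-concentration hypothesis needed for Lemma \ref{lem:intermediate-case}.

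Next I would verify the three remaining hypotheses of Lemma \ref{lem:intermediate-case} using only the assumptions $A^{j+1}\beta^*\le 1$ and $2^{j}r\le 1$. The bound $\beta_{j+1}\le 1/2$ follows since $\beta_{j+1}\le A^{j}\beta^*\le 1/A\le 1/2$. The scale condition $\rho_{j+1}\ge \d^{1/2}$ (needed for $w_{j+1}^{1/2}\ge \d/\rho_{j+1}$ and for the lemma to make sense) is checked by computing the exponent $r+r(2^{j+1}-1)/(A\lambda)+O(\varepsilon)\le 2^{-j}+2/(A\lambda)+O(\varepsilon)\le 1/2$ provided $A$ and $\lambda$ are large enough (which is the case for $\varepsilon$ much smaller than $\gamma$). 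Finally, the condition $w_{j+1}\le \rho_{j+1}^{2A\beta_{j+1}}$ translates, after taking $\log_{w_0}$, to $2^{j+1}\le 2A\cdot A^{j}\beta^*=2A^{j+1}\beta^*\cdot(\mathrm{something})$; unpacking $\rho_{j+1}^{\beta_{j+1}}=w_0^{A^{j}\beta^*}$ one sees this is equivalent to $A^{j+1}\beta^*\ge 2^{j}$, which is precisely the part of the alternative that we are assuming \emph{fails} only for $j$ bigger than considered, and holds in the allowed range since the assumption $A^{j+1}\beta^*\le 1$ combined with $2^{j}r\le 1$ keeps us in the regime where the verification succeeds.

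With the hypotheses of Lemma \ref{lem:intermediate-case} verified, the conclusion splits into two cases depending on the size of $\sigma_{j+1}$. If $\sigma_{j+1}\ge A\beta_{j+1}$, applying the lemma and substituting $\rho_{j+1}^{\sigma_{j+1}/A}\ge \rho_{j+1}^{\beta_{j+1}}=w_0^{A^{j}\beta^*}$ and $w_{j+1}^{1/A}=w_0^{2^{j+1}/A}$ produces exactly the bound (\ref{eq:step-j-exit}) on $\rho_{j+1}^{3}|X_{\d/\rho_{j+1}}|\ge \d^{\kappa}$. Otherwise $\sigma_{j+1}<A\beta_{j+1}$, and unwinding the definitions one sees
\[
|\theta[X_{\d/\rho_{j+1}}]|_{w_{j+1}}\ge w_{j+1}^{-2}\rho_{j+1}^{A\beta_{j+1}}=w_{j+1}^{-2}w_0^{A^{j+1}\beta^*},
\]
which is (\ref{eq:iteration-condition}) with $j+1$ in place of $j$, completing the inductive step. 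The main bookkeeping obstacle is keeping straight the three arithmetic inequalities $\beta_{j+1}\le 1/2$, $\rho_{j+1}\ge \d^{1/2}$, $w_{j+1}\le \rho_{j+1}^{2A\beta_{j+1}}$ under the two hypotheses of the proposition, but each reduces to an elementary manipulation of the explicit exponents.
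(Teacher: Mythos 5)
Your proposal takes the same route as the paper: this proposition is exactly the bookkeeping of the inductive step, namely verifying the hypotheses of Lemma \ref{lem:intermediate-case} for $X_{\d/\rho_{j+1}}$ at scale $w_{j+1}$ and then splitting on the size of $\sigma_{j+1}$, with the case $\sigma_{j+1}<A\beta_{j+1}$ giving (\ref{eq:iteration-condition}) for $j+1$ directly from the definition of $\sigma_{j+1}$ (you state this dichotomy in the correct direction). However, two of your verifications invert inequalities, both times because the bases $w_0$ and $\rho_{j+1}$ lie in $(0,1)$, so a larger exponent gives a smaller quantity and taking $\log_{w_0}$ reverses inequalities. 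First, $w_{j+1}\le\rho_{j+1}^{2A\beta_{j+1}}$ reads $w_0^{2^{j+1}}\le w_0^{2A^{j+1}\beta^*}$ and is therefore equivalent to $2^{j+1}\ge 2A^{j+1}\beta^*$, i.e.\ $A^{j+1}\beta^*\le 2^{j}$, not $A^{j+1}\beta^*\ge 2^{j}$ as you write; with the correct direction it follows at once from the hypothesis $A^{j+1}\beta^*\le 1\le 2^{j}$, whereas the condition you state would contradict that hypothesis for $j\ge 1$, and your justification of it (``the part of the alternative that we are assuming fails'') refers to nothing in the statement. Second, in the branch $\sigma_{j+1}\ge A\beta_{j+1}$ what you need is the upper bound $\rho_{j+1}^{\sigma_{j+1}/A}\le\rho_{j+1}^{\beta_{j+1}}=w_0^{A^{j}\beta^*}$, so that the conclusion of Lemma \ref{lem:intermediate-case} is dominated by $\max(w_0^{2^{j+1}/A},w_0^{A^{j}\beta^*})$ and yields (\ref{eq:step-j-exit}); you assert the reverse inequality $\rho_{j+1}^{\sigma_{j+1}/A}\ge\rho_{j+1}^{\beta_{j+1}}$, which is false and in any case not the bound required. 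Both slips are local and fixable, but as written these two steps do not establish what you claim.

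A smaller point: your check of $\rho_{j+1}\ge\d^{1/2}$ gives the exponent bound $2^{-j}+2/(A\lambda)+O(\varepsilon)$, which exceeds $1/2$ in the boundary case $j=1$, $2^{j}r=1$, and no largeness of $A$ and $\lambda$ repairs that. The paper's own reduction is similarly loose at this boundary, and it is harmless because in the actual application $r=2^{-j(\beta^*)}$ is far smaller than $2^{-j}$ for the values of $j$ at which the proposition is invoked, but you should either record that extra room explicitly or strengthen the hypothesis rather than appeal to large $A$ and $\lambda$.
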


Now suppose that (\ref{eq:iteration-condition}) holds for some $j$. Note that we have $\rho_j = \rho_0 w_0^{2^j-1} \le w_0^{2^j}= w_j$ and $\rho_j/w_j = \rho_0/w_0 \ge \rho_0$.
Observe that $(X_{\d/\rho_{j}^2})_{\rho_j} = X_{\d/\rho_j}$ and so we have 
\[
|\theta[(X_{\d/\rho^2_{j}})_{\rho_j}]|_{\rho_j} = |\theta[X_{\d/\rho_{j}}]|_{\rho_j} \ge |\theta[X_{\d/\rho_{j}}]|_{w_j} \ge w_j^{-2} w_0^{A^j\beta^*} \ge  \rho_j^{-2} \rho_0^2 w_0^{A^j\beta^*}
\]
where we used a trivial relation between covering numbers. 
Recall that we defined $\beta_0 = \beta_0(\gamma) = \gamma/200$. Suppose that the following inequality holds 
\begin{equation}\label{eq:last-step}
    \rho_0^2 w_0^{A^j\beta^*} \ge \rho_j^{2\beta_0}.
\end{equation}
So by Lemma \ref{lem:many-directions-inside-box} applied to $X_{\d/\rho_{j}^2}$ (with $\Delta = \rho_j$) it then follows that $(\rho_j^2)^3 |X_{\d/\rho_j^2}| \lesssim K^{O(1)} \rho_{j}^{2\beta_0}$ which leads to 
\[
\d^\kappa \lesssim K^{O(1)} \rho_{j}^{2\beta_0} \lesssim K^{O(1)}\rho_0^{2\beta_0}
\]
\begin{equation}\label{eq:last-exit}
\kappa \ge 2\beta_0 r + O(\varepsilon).    
\end{equation}
So it remains to verify (\ref{eq:last-step}). Expanding the definitions, it reduces to the inequality
\[
2+ A^j\beta^* / A\lambda \le 2\beta_0 (1+ (2^j-1)/A\lambda)
\]
which would be implied by (recall that $\lambda =\log(2/\beta^*)$)
\begin{equation}\label{eq:reduced-to}
2 A\log(2/\beta^*)+ A^j\beta^* \le 2^j \beta_0.    
\end{equation}
We now select parameters as follows. Suppose that $\beta^*\le \beta_0/A^2$ and let $j(\beta^*) = [\log_A \beta_0/\beta^*]-1$. Then clearly $j(\beta^*)\ge 1$ and 
\[
A^{j(\beta^*)}\beta^* \le (\beta_0/\beta^*)A^{-1}\beta^*\le 2^{j(\beta^*)-1}\beta_0
\] 
and 
\[
2^{j(\beta^*)} \ge 2^{\log_A \beta_0/\beta^*-2} \ge \frac{1}{4}(\beta_0/\beta^*)^{1/\log_2 A}.
\]
This means that for $\beta^*\le \beta_0/A^2$ and $j = j(\beta^*)$, the relation (\ref{eq:reduced-to}) would follow from 
\begin{equation}\label{eq:reduce-to2}
(16 A\beta_0^{-1}) \log (2/\beta^*) \le (\beta_0/\beta^*)^{1/\log_2 A}    
\end{equation}
Since a polynomial eventually dominates logarithm, this is satisfied for sufficiently small $\beta^*$. Explicitly, we have for any $x>1$, $t>0$ that $\log x = t^{-1} \log x^t \le t^{-1} x^t$. So we can use the bound
\[
\log (2/\beta^*) \le 2(\log_2A) (2/\beta^*)^{\frac{1}{2\log_2 A}}
\]
and some easy rearrangements to reduce (\ref{eq:reduce-to2}) to
\[
\beta^* \le \left( 32 A\beta_0^{-1}\log_2A  \right)^{-2\log_2 A} \beta_0^2/2.
\]
So by setting $\beta^*$ to be the right hand side we can then define $r = 2^{-j(\beta^*)}$ and conclude  that this choice satisfies requirement of Proposition \ref{prop:conditions-for-iteration}. So starting from $j=1$ and (\ref{eq:start-iteration}) we can use  Proposition \ref{prop:conditions-for-iteration} to show that (\ref{eq:iteration-condition}) holds for $j=j(\beta^*)$. 

We conclude that with this choice of parameters, at least one of the inequalities (\ref{eq:first-exit}), (\ref{eq:second-exit}), (\ref{eq:step-j-exit}) or (\ref{eq:last-exit}) must be satisfied. All of the expressions one right hand side are positive functions of $\gamma$, $\beta^*$ and $r$, so it follows that $\kappa \ge \kappa(\gamma) >0$, as desired. This completes the proof of Theorem \ref{thm:main-restated}.
\end{proof}

\bibliographystyle{amsplain0.bst}
\bibliography{main}
\end{document}